\newcommand{\calG}{\mathcal{G}}
\newcommand{\calP}{\mathcal{P}}
\newcommand{\calQ}{\mathcal{Q}}
\newcommand{\calT}{\mathcal{T}}
\newcommand{\T}{\calT}
\newcommand{\euC}{\EuScript{C}}
\newcommand{\euS}{\EuScript{S}}
\newcommand{\MF}{\ensuremath{\mathcal{MF}}}
\newcommand{\ML}{\ensuremath{\mathcal{ML}}}
\newcommand{\PMF}{\ensuremath{\mathcal{PMF}}}
\newcommand{\PML}{\ensuremath{\mathcal{PML}}}
\renewcommand{\H}{\ensuremath{\mathbb{H}}}
\newcommand{\C}{\ensuremath{\mathbb{C}}}
\newcommand{\R}{\ensuremath{\mathbb{R}}}
\newcommand{\CC}{\ensuremath{{\sf C}}}
\newcommand{\PCC}{\ensuremath{\mathcal P{\sf C}}}
\newcommand{\RP}{{\R\rm{P}}}
\newcommand{\G}{\ensuremath{G}}
\DeclareMathOperator{\core}{core}
\DeclareMathOperator{\hull}{hull}
\DeclareMathOperator{\I}{i}
\DeclareMathOperator{\CV}{\sf CV}
\DeclareMathOperator{\Mod}{Mod}
\DeclareMathOperator{\CAT}{CAT}
\DeclareMathOperator{\SL}{SL}
\DeclareMathOperator{\Real}{Re}
\DeclareMathOperator{\Ext}{Ext}
\newcommand{\Teich}{Teich\-m\"{u}ller~}
\newcommand{\QQ}{{\calQ^1}}
\newcommand{\cyl}{\ensuremath{{\sf cyl}}}
\newcommand{\Flat}{\ensuremath{{\sf Flat}}}
\newcommand{\Mix}{\ensuremath{{\sf Mix}}}
\newcommand{\PMix}{\ensuremath{\mathcal P{\sf Mix}}}
\newcommand{\foltheta}{\ensuremath{{\nu_q^\theta}}}
\newcommand{\from}{{\colon \,}}
\newcommand{\limni}{\lim\limits_{n\to\infty}}
\renewcommand{\matrix}[4]{\bigl(\begin{smallmatrix} #1& #2
\\#3&#4\end{smallmatrix}\bigr)}
\theoremstyle{plain}
\newtheorem{theorem}{Theorem}
\newtheorem{corollary}[theorem]{Corollary}
\newtheorem{proposition}[theorem]{Proposition}
\newtheorem{lemma}[theorem]{Lemma}
\newtheorem*{question}{Question}
\newtheorem*{rigid-iff-dense}{Theorem \ref{T:rigid iff dense}}
\newtheorem*{current-embedding}{Theorem \ref{T:current embedding}}
\newtheorem*{boundary}{Theorem \ref{T:boundary}}
\newtheorem*{S-rigid}{Theorem \ref{T:S rigid}}
\newtheorem*{nonrigid}{Theorem \ref{T:nonrigid}}
\theoremstyle{remark}
\theoremstyle{definition}
\newtheorem{remark}[theorem]{Remark}
\begin{document}

\title{Length spectra and degeneration of flat metrics}

\author[Duchin]{Moon Duchin}
\address{Department of Mathematics\\  
530 Church Street\\
Ann Arbor, MI 48109-1043
}
\email{mduchin@umich.edu}
\urladdr{http://www.math.lsa.umich.edu/$\sim$mduchin/}

\author[Leininger]{Christopher J. Leininger}
\address{Department of Mathematics \\
University of Illinois at Urbana-Champaign \\
1409 W. Green St. \\
Urbana, IL 61801
}
\email{clein@math.uiuc.edu}
\urladdr{http://www.math.uiuc.edu/$\sim$clein/}

\author[Rafi]{Kasra Rafi}
\address{Department of Mathematics \\
University of Oklahoma\\
Norman, OK 73019-0315
}
\email{rafi@math.ou.edu}
\urladdr{http://www.math.ou.edu/$\sim$rafi/}

\date{\today}
\maketitle

\begin{abstract}
In this paper we consider flat metrics (semi-translation structures) on surfaces of finite type.  There are two main results.
The first is a complete description of when a set of simple closed curves is spectrally rigid, 
that is, when the length vector determines a metric among the class of flat metrics.
Secondly, we give an embedding into the space of geodesic currents and use this 
to get a boundary for the space of flat metrics.  The geometric interpretation is that flat metrics
degenerate to {\em mixed structures} on the surface:  part flat metric and part measured
foliation.
\end{abstract}

\section{Introduction}

From the lengths of all, or some, curves on a surface $S$, can you
identify the metric?  To be precise, fix a finite-type surface $S$, denote by $\euC(S)$ the set of homotopy classes
of closed curves on $S$, and
let $\euS(S)$ be the homotopy classes represented by simple closed curves (simply 
denoted by $\euC$ and $\euS$ when $S$ is understood).
Given an isotopy class of metrics $\rho$ and a curve $\alpha \in \euC$, we
write $\ell_\rho(\alpha)$ to denote the infimum of lengths of
representatives of $\alpha$ in a representative metric for $\rho$, and
we call this the \textit{length of $\alpha$ in $\rho$} or the
\textit{$\rho$--length of
$\alpha$}.  For a set
of curves $\Sigma\subset\euC$, we define the \textit{(marked)
$\Sigma$--length spectrum of $\rho$} to be the length vector, indexed over $\Sigma$:
$$\lambda_\Sigma(\rho) = (\ell_\rho(\alpha))_{\alpha \in
\Sigma}
\in \mathbb R^\Sigma.$$

For a family of metrics $\calG=\calG(S)$, up to isotopy, and a family of
curves  $\Sigma$, we
are interested in the problem of deciding when
$\lambda_\Sigma(\rho)$ determines $\rho$.  In other words, we ask

\begin{question} Is the map $\calG \to \mathbb R^\Sigma$ given by
$\rho
\mapsto \lambda_\Sigma(\rho)$ an injection?
\end{question}
\noindent If this map is injective, so that $\rho\in\calG$ is determined by the lengths of
the $\Sigma$ curves, we say that $\Sigma$ is  {\em spectrally rigid} over $\calG$.

For instance, we may take $\Sigma = \euS$, and $\calG= \T(S)$,
the \Teich space of complete finite-area hyperbolic metrics on $S$ (constant curvature $-1$).
Here it is a classical fact due to Fricke that $\T(S)
\to \mathbb R^{\euS}$ is injective; that is, $\euS$ is spectrally rigid over $\T(S)$.

Another natural family of metrics arising in \Teich theory consists of
those induced by unit-norm quadratic differentials;
these are locally flat (isometrically Euclidean)
away from a finite number of singular points with cone angles $k\pi$.  
We note that these are nonpositively
curved in the sense of comparison geometry when the surface $S$ is closed, but $k=1$ is allowed 
in the case of punctures.
We will call these {\em flat metrics} on $S$ (see Section \ref{preliminary section} for a detailed discussion).
For example, identifying opposite sides of a regular Euclidean octagon produces a
flat metric on a genus-two surface, with the negative curvature concentrated into one cone point 
of angle $6\pi$.
We denote this family of metrics by $\Flat(S)$.

\begin{theorem}  \label{T:S rigid}
For any finite type surface $S$, the set of simple closed curves $\euS$ is spectrally rigid over
$\Flat(S)$.
\end{theorem}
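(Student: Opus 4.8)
The plan is to show that the $\euS$--length spectrum of a flat metric determines its entire family of directional measured foliations, from which the metric can be reconstructed. Write $q$ for a unit--norm quadratic differential inducing the metric, and recall that $|q|$ remembers $q$ only up to the rotations $q \mapsto e^{i\phi}q$, which act on the directional foliations $\foltheta \in \MF$ by shifting the parameter $\theta$; it therefore suffices to recover the family $\{\foltheta\}_{\theta\in[0,\pi)}$ up to an overall rotation of $\theta$. The key starting identity is of Crofton type. Representing the $q$--geodesic of a simple closed curve $\alpha$ as a concatenation of straight segments of lengths $L_j$ in directions $\phi_j$, and using that for almost every $\theta$ this geodesic is $\foltheta$--taut, so that $i(\alpha,\foltheta)=\sum_j L_j|\sin(\phi_j-\theta)|$, together with $\int_0^\pi|\sin(\phi-\theta)|\,d\theta = 2$, one obtains
$$\ell_q(\alpha) = \frac{1}{2}\int_0^\pi i(\alpha,\foltheta)\,d\theta.$$
The right--hand side is continuous and homogeneous in $\alpha$, and weighted simple closed curves are dense in $\MF$, so the spectrum $(\ell_q(\alpha))_{\alpha\in\euS}$ in fact determines $\ell_q$ as a function on all of $\MF$.

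Next I would extract the individual directional foliations from the single functional $\mu\mapsto\ell_q(\mu)$ on $\MF$. The mechanism is the inequality $\ell_q(\mu)\ge i(\mu,\foltheta)$, valid for every $\mu\in\MF$ and every direction $\theta$ because flat length dominates transverse measure in any direction; this passes from weighted curves to all of $\MF$ by continuity. One checks that equality $\ell_q(\mu)=i(\mu,\foltheta)$ holds exactly when $\mu$ is proportional to the perpendicular directional foliation $\nu_q^{\theta+\pi/2}$. Thus a transverse directional pair is characterized by the pair of equalities $\ell_q(\mu)=\ell_q(\nu)=i(\mu,\nu)$, a condition expressed purely in terms of the determined function $\ell_q$ and topological intersection numbers. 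Having recovered such a pair (or the whole family), I would invoke the theorem of Gardiner and Masur---that a filling transverse pair of measured foliations is the horizontal/vertical pair of a unique quadratic differential---to reconstruct $q$ up to rotation, and hence the flat metric.

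The main obstacle is exactly the inversion of the Crofton average: the spectrum only sees the $\theta$--integrated intersection data, so proving that the equalities above are \emph{sharp}---that no non--directional pair $(\mu,\nu)$ satisfies $\ell_q(\mu)=\ell_q(\nu)=i(\mu,\nu)$---is the crux, and is what makes simple closed curves suffice only after genuine work; I expect to attack it through a sharp intersection inequality whose equality case forces $\mu,\nu$ to be the vertical/horizontal foliations of a common flat structure. Two technical layers attend this. First, I must establish that the flat geodesic is $\foltheta$--taut for almost every $\theta$ so that the Crofton identity is valid; this uses the singular flat geometry and the fact that, although the metric is only nonpositively curved with cone points of angle $k\pi$ (and $k=1$ at punctures), geodesic representatives remain concatenations of saddle connections realizing the intersection number with all but finitely many directional foliations. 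Second, since these are semi--translation rather than translation structures, the foliations $\foltheta$ are in general non--orientable, so the extraction argument and the appeal to Gardiner--Masur must be made $\pm$--equivariantly; I expect this to be routine once the generic extraction is in place. Non--generic directions carrying flat cylinders can, if needed, be pinned down separately through the asymptotics of $\ell_q(T_\gamma^n\alpha)$ for cylinder curves $\gamma$, and filled in by continuity.
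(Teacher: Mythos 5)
Your first two steps are sound and in fact coincide with machinery the paper develops anyway: the Crofton-type identity is Lemma \ref{L:length integral}, and the extension of $\ell_q$ from $\euS$ to a continuous homogeneous function on $\MF$ is exactly what Proposition \ref{P:average} and Corollary \ref{C:flat continuous} provide. The gap is in the step you yourself flag as the crux, and it is not merely unproven --- the claimed equality characterization is false. The equality case of $\ell_q(\mu)\ge \I(\mu,\foltheta)$ consists of \emph{all} measured foliations and laminations whose $q$-geodesic realization is straight in the single direction attaining the angle bound, not just multiples of the Lebesgue-measured directional foliation $\nu_q^{\theta+\pi/2}$. Concretely, if $\alpha\in\cyl(q)$ is a cylinder curve straight in that direction and $\mu=\alpha/\ell_q(\alpha)$, then $\ell_q(\mu)=\I(\mu,\foltheta)=1$; taking $\nu$ to be the directional foliation perpendicular to $\alpha$ gives $\ell_q(\mu)=\ell_q(\nu)=\I(\mu,\nu)=1$, so your pair condition is satisfied by a pair which is not a directional pair, and when $\nu$ is minimal this pair even fills. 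The same failure occurs for non-Lebesgue transverse measures in a non-uniquely-ergodic direction, e.g.\ reweighting the core curves of a periodic direction with several cylinders, which has the same support as the true directional foliation and hence fills together with the perpendicular one. Applying Gardiner--Masur to such a spurious pair produces a quadratic differential different from $q$ (a Jenkins--Strebel-type differential whose cylinders have the wrong moduli), so the proposed reconstruction is not well defined and injectivity of the spectrum does not follow. Any repair that selects the ``correct'' pairs by demanding that the reconstructed differential have the right length function on $\MF$ is circular, since that is precisely the rigidity statement being proved.

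It is worth seeing how the paper navigates exactly this obstruction: rather than characterizing directional foliations by an equality case, it uses the spectrum to detect the cylinder curves themselves --- Lemma \ref{L:spec gives cyl} shows $\alpha\notin\cyl(q)$ if and only if some curve $\beta$ satisfies the exact additivity $\ell_q(T_\alpha(\beta))-\ell_q(\beta)=\ell_q(\alpha)\cdot\I(\alpha,\beta)$ --- so the very objects that break your characterization become the data. Then Masur's density of cylinder directions, unique ergodicity of generic directions, and the Gardiner--Masur uniqueness of the Teichm\"uller geodesic joining two uniquely ergodic foliations show that $\cyl(q)$ determines the Teichm\"uller disk $\H_q$ (Lemma \ref{L:cyl gives H}); finally $q$ is pinned down inside $\H_q$ as the intersection of three horocycles, namely the level sets of the lengths of three cylinder curves in distinct directions. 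If you want to salvage your outline, the missing ingredient is a criterion, expressed purely in terms of $\ell_q$ and $\I$, that distinguishes the Lebesgue-measured full directional foliations from other laminations straight in the same direction; the twist identity of Lemma \ref{L:spec gives cyl} is one such criterion, and some substitute for it appears unavoidable.
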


Put in other terms, this theorem states that 
the lengths of simple closed curves determine a quadratic differential up to rotation.  
Let $\xi(S) = 3g-3 + n$ be a measure of the complexity of $S$,
where $g$ is the genus and $n$ is the number of punctures.
Then we can compare Theorem~\ref{T:S rigid} to the rigidity over hyperbolic metrics
by noting that the dimension of $\T(S)$ is $2\xi$, while the dimension of $\Flat(S)$ is $4\xi-2$.  

In fact, we obtain a much sharper version of Theorem \ref{T:S rigid} which provides a complete answer to the motivating question above for simple closed curves over flat metrics.  Let 
$\PMF=\PMF(S)$ denote Thurston's space of projective measured foliations on $S$.

\begin{theorem}\label{T:rigid iff dense}
If $\xi(S) \geq 2$, then $\Sigma\subset\euS\subset\PMF$ is spectrally rigid over $\Flat(S)$ if
and only if $\Sigma$ is dense in $\PMF$.
\end{theorem}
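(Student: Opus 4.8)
The plan is to prove the two implications separately, with the forward direction (density $\Rightarrow$ rigidity) a soft consequence of Theorem~\ref{T:S rigid} and the reverse direction (rigidity $\Rightarrow$ density) requiring a genuine construction.

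For the forward direction, suppose $\Sigma$ is dense in $\PMF$ and that $q_1,q_2\in\Flat(S)$ satisfy $\ell_{q_1}(\alpha)=\ell_{q_2}(\alpha)$ for all $\alpha\in\Sigma$. Using the embedding of Theorem~\ref{T:current embedding}, let $L_{q_1},L_{q_2}$ be the associated geodesic currents, so that $i(L_{q_j},\cdot)$ restricts on $\MF$ to the $q_j$--length function and is continuous there. Set $f=i(L_{q_1},\cdot)-i(L_{q_2},\cdot)$ on $\MF$. By hypothesis $f$ vanishes on the weight-one curves of $\Sigma$, and since intersection number is homogeneous, $f$ vanishes on the entire cone $\R_{\geq 0}\cdot\Sigma\subset\MF$. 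As $\Sigma$ is dense in $\PMF$, this cone is dense in $\MF$, so continuity of $f$ forces $f\equiv 0$. In particular $\ell_{q_1}$ and $\ell_{q_2}$ agree on every simple closed curve, i.e.\ on all of $\euS$, whence Theorem~\ref{T:S rigid} gives $q_1=q_2$. This argument is formal; the hypothesis $\xi(S)\geq 2$ enters only through the reverse direction.

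For the reverse direction I would argue contrapositively, producing distinct flat metrics with identical $\Sigma$--length spectra whenever $\Sigma$ is not dense. Since $\overline\Sigma\subsetneq\PMF$ is closed and proper, its complement contains a nonempty open set, and because $\euS$ is dense in $\PMF$ I may choose a simple closed curve $\delta$ and a neighborhood $V$ of $[\delta]$ with $V\cap\overline\Sigma=\varnothing$. It then suffices to find $q_1\neq q_2$ in $\Flat(S)$ with $i(L_{q_1},\mu)=i(L_{q_2},\mu)$ for every measured foliation $\mu$ with $[\mu]\notin V$; such metrics automatically agree on $\Sigma$. To build them I would begin with a flat metric containing an embedded flat cylinder with core $\delta$ and deform its flat geometry by a surgery supported near $\delta$ that preserves the transverse crossing distances across the cylinder while altering its around-geometry. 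The guiding principle is that a foliation $\mu$ with $[\mu]\notin V$ has geodesic representatives that cross $\delta$ transversally in definite proportion to how much they wind, so that a deformation affecting only the winding geometry leaves $i(\cdot,\mu)$ fixed, whereas foliations concentrating on $\delta$, whose projective classes accumulate at $[\delta]\in V$, do feel the change.

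The main obstacle is precisely this localized construction, and making it exact rather than infinitesimal. A naive move such as rescaling the circumference of the cylinder alters the length of \emph{every} curve meeting it, not merely those projectively near $[\delta]$, so the deformation must be arranged so that the competing contributions cancel identically for all foliations outside $V$. I expect to handle this by passing to period coordinates on $\Flat(S)$, in which curve lengths are piecewise-analytic functions of the periods, and exhibiting a nontrivial tangent direction annihilating the differentials of $i(\cdot,\mu)$ for all $[\mu]\notin V$; the continuity of the intersection pairing supplied by Theorem~\ref{T:current embedding} is then what I would use to upgrade such a first-order statement to genuine equality of the two length spectra.
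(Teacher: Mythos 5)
Your forward direction is correct and is essentially the paper's own argument: approximate any $\mu\in\MF$ by $t_i\alpha_i$ with $\alpha_i\in\Sigma$, use the continuity of the homogeneous extension of flat length (Corollary \ref{C:flat continuous}, which rests on the embedding of Theorem \ref{T:current embedding}) to conclude that the two length functions agree on all of $\MF$, hence on $\euS$, and finish with Theorem \ref{T:S rigid}.

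The reverse direction, however, has a genuine gap, and your reduction actually makes the problem strictly harder than it needs to be. You ask for distinct $q_1,q_2$ whose length functions agree on every $\mu$ with $[\mu]\notin V$, where $V$ is a small neighborhood of a single curve $[\delta]$; that is, the entire disagreement locus must be squeezed inside $V$. Nothing in your sketch produces this, and the mechanism you propose works against you: a flat geodesic crossing a cylinder of height $h$ with horizontal displacement $d$ contributes $\sqrt{h^2+d^2}$ per crossing, so any change in the ``around-geometry'' (circumference or twist) of the cylinder is felt by \emph{every} curve crossing it, and such curves fill a region of $\PMF$ nowhere close to being contained in $V$. You acknowledge this and propose to find cancellation via period coordinates, but the final step --- ``upgrade such a first-order statement to genuine equality'' by continuity of the intersection pairing --- is not a proof: continuity cannot promote vanishing of differentials to equality of functions along a path; one needs either exact (not infinitesimal) invariance or an integrability argument, neither of which you supply.

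The paper sidesteps the localization problem entirely, and this is the key idea your proposal is missing. Theorem \ref{T:nonrigid} (via Proposition \ref{P:magnetic family}) constructs a fixed complete recurrent train track $\tau$ that is \emph{magnetic} for a positive-dimensional family $\Omega\subset\Flat(S)$, with the property that the length of every curve carried by $\tau$ is \emph{exactly} constant on $\Omega$: the length of a carried curve is the dot product of its weight vector with the branch-length vector, and the deformations are chosen in the orthogonal complement of the space of weight vectors (balanced at the switches), so invariance is linear algebra, not a first-order approximation. The agreement set is then the carried set $U_\tau$, a fixed set with nonempty interior --- nothing like the complement of a tiny $V$. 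The non-density of $\Sigma$ is exploited dynamically rather than geometrically: choose a pseudo-Anosov $h$ whose attracting lamination lies in $U_\tau$ and a mapping class $\varphi$ taking the open set missed by $\Sigma$ to a neighborhood of the repelling lamination of $h$; then $h^n\varphi\Sigma$ is carried by $\tau$ for $n$ large, and $\varphi^{-1}h^{-n}\Omega$ is the required family on which all $\Sigma$-lengths are constant. Without this mapping-class-group step (or a genuinely new construction achieving your much stronger localization), the reverse implication remains unproved.
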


This theorem stands in contrast to the hyperbolic case, 
where there are {\em finite} spectrally rigid sets, as is further discussed in  \S \ref{S:context}.  We also remark 
that if $\xi(S) \leq 1$ then it is easy to see that any set of three distinct, primitive curves is spectrally rigid 
over $\Flat(S)$; see Proposition \ref{P:three curves torus}.

\begin{theorem} \label{T:nonrigid}
Suppose $\xi(S) \geq 2$.  
If $\Sigma \subset \euS \subset \PMF$ and $\overline \Sigma \neq \PMF$, then there is a deformation family $\Omega_\Sigma \subset \Flat(S)$ for which 
$\Omega_\Sigma \to \mathbb R^\Sigma$ is constant, and such that
the dimension of $\Omega_\Sigma$ is proportional to the dimension of $\Flat(S)$ itself.
\end{theorem}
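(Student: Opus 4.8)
The plan is to reformulate everything in terms of the geodesic current $L_q$ attached to a flat metric $q$ by Theorem~\ref{T:current embedding}, using the directional decomposition of the flat length. For each direction $\theta\in[0,\pi)$ the flat metric $q$ has a foliation $\mathcal F_\theta^q\in\MF$ by straight lines, and (up to a universal constant) the flat length is recovered by integrating intersection numbers, $\ell_q(\alpha)=\tfrac12\int_0^\pi i(\mathcal F_\theta^q,\alpha)\,d\theta=i(L_q,\alpha)$, where $L_q=\tfrac12\int_0^\pi\mathcal F_\theta^q\,d\theta$. Thus a family $\{q_s\}\subset\Flat(S)$ has constant $\Sigma$--spectrum exactly when $i(L_{q_s},\alpha)$ is independent of $s$ for every $\alpha\in\Sigma$; infinitesimally this says $i(\dot L,\alpha)=0$ for the tangent current $\dot L$, and since $\alpha\mapsto i(\,\cdot\,,\alpha)$ is continuous on currents, it is equivalent to require this for all $\alpha\in\overline\Sigma$. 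So the whole problem is to produce, at a well-chosen $q$, a linear space of flat deformations whose tangent currents annihilate $\overline\Sigma$, of dimension a definite fraction of $4\xi-2$, and then to integrate it.

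To set up the dimension count, recall that Theorem~\ref{T:current embedding} realizes $\Flat(S)$ as an embedded submanifold $\mathcal L$ of the space of currents of dimension $4\xi-2$, and that (this being the content of the density half of Theorem~\ref{T:rigid iff dense}) the length differentials $\{d\ell_\bullet(\alpha)_q:\alpha\in\PMF\}$ span the cotangent space $(T_q\mathcal L)^*$. Writing $\Phi_\Sigma\from\Flat(S)\to\mathbb R^\Sigma$ for the spectrum map, we have $\ker d(\Phi_\Sigma)_q=\bigcap_{\alpha\in\overline\Sigma}\ker d\ell_\bullet(\alpha)_q$, so the fiber of $\Phi_\Sigma$ through $q$ has dimension at least $(4\xi-2)-\operatorname{rank}d(\Phi_\Sigma)_q$. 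Hence it suffices to find a single $q$ at which the differentials coming from $\overline\Sigma$ fail to span by a definite amount, that is, at which the complementary open set $U=\PMF\setminus\overline\Sigma$ contributes at least $c(4\xi-2)$ independent cotangent directions beyond those of $\overline\Sigma$; the gap then exhibits itself as the tangent space of the desired $\Omega_\Sigma$, integrated by a constant-rank argument on a neighborhood of $q$.

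The geometry behind such a rank drop is clearest in the model case where $\Sigma$ is supported in a proper essential subsurface $W$. Then one can choose $q$ so that the complement $Z=S\setminus W$ is a flat subsurface glued to $W$ along horizontal saddle connections; every $\alpha\in\Sigma$ has a $q$--geodesic disjoint from the interior of $Z$, and the shears and stretches of the flat structure on $Z$ that fix its boundary give an \emph{explicit} family with exactly constant $\Sigma$--spectrum and dimension proportional to $\dim\Flat(Z)$, hence to $4\xi-2$. For a general non-dense $\Sigma$ I would fix $[\mu]\in U$, build a flat metric $q$ adapted to $\mu$ so that a block of deformation directions moves $L_q$ only through the foliations lying in $U$, and show directly that this block annihilates $\overline\Sigma$ and has dimension $\propto 4\xi-2$; concretely this means selecting $\propto 4\xi$ foliations in $U$ whose length differentials at $q$ are independent from those of all of $\overline\Sigma$, which is a quantitative transversality statement for the length--differential map $\PMF\to(T_q\mathcal L)^*$.

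The main obstacle is making this last step \emph{uniform} over all non-dense $\Sigma$, and in particular handling the case where $\Sigma$ contains filling curves. When some $\alpha\in\Sigma$ fills $S$, its $q$--geodesic meets every essential subsurface, so no deformation localized to a region can be invisible to it, and the subsurface construction above simply does not apply. The argument must then be global: one has to show that no matter how $\Sigma$ is arranged, an open set's worth of missing foliations always accounts for a fixed proportion of the length--differential directions at a suitably generic $q$, and that this infinitesimal kernel integrates to an honest positive-dimensional family in $\Flat(S)$ rather than collapsing. I expect the bulk of the work, and the only genuinely delicate point, to be this quantitative independence of the $U$--differentials from the $\overline\Sigma$--differentials, uniformly in $\Sigma$.
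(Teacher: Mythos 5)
There is a genuine gap, and you have in fact located it yourself: your final paragraph concedes that when $\Sigma$ contains filling curves the subsurface-localization picture fails, and that what is needed is a ``quantitative independence'' of the $U$--differentials from the $\overline\Sigma$--differentials, uniformly in $\Sigma$ --- but you give no mechanism for this, and that mechanism is the entire content of the theorem. The paper's proof (Section \ref{S:families}) supplies it with two ideas absent from your proposal. First, an explicit construction (Proposition \ref{P:magnetic family}): a complete recurrent train track $\tau$, built from flat building blocks glued along cylinders, together with a positive-dimensional family $\Omega\subset\Flat(S)$ of flat metrics in which $\tau$ is \emph{magnetic} (curves carried by $\tau$ are realized geodesically inside the track) and in which the branch lengths are perturbed subject to the switch conditions, so that at every switch the increase in length of incoming branches cancels the decrease of outgoing ones. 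Consequently the length of \emph{every} curve carried by $\tau$ --- including filling curves, since a complete recurrent track carries a set $U_\tau\subset\PML$ with nonempty interior --- is constant on $\Omega$. This is exactly the global mechanism you were missing: invisibility of the deformation is achieved not by localizing it away from the curves in a subsurface, but by cancellation at switches. Second, a dynamical reduction: given any non-dense $\Sigma$, choose a pseudo-Anosov $h$ with attracting fixed point $\lambda^+\in U_\tau$, and a mapping class $\varphi$ carrying the open set missed by $\Sigma$ onto a neighborhood of the repelling point $\lambda^-$; north-south dynamics then puts all of $h^n\varphi\Sigma$ inside $U_\tau$ for large $n$, and $\Omega_\Sigma=\varphi^{-1}h^{-n}\Omega$ is the desired family. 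Only the hypothesis $\overline\Sigma\neq\PMF$ is used, and it is used uniformly; no transversality or genericity statement ever enters.

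Separately, the infinitesimal framework you set up is not sound as stated. Spectral rigidity of $\PMF$ (injectivity of $q\mapsto\lambda_{\euS}(q)$, from Theorem \ref{T:S rigid}) does not imply that the length differentials span the cotangent space --- injectivity of a map is not injectivity of its differential --- and the flat length functions $q\mapsto\ell_q(\alpha)$ are only piecewise smooth, since the decomposition of the geodesic representative into saddle connections changes combinatorially as $q$ varies; so $\ker d(\Phi_\Sigma)_q$ is not well defined everywhere, the rank of $\Phi_\Sigma$ (a map into the infinite-dimensional $\mathbb R^\Sigma$) is not constant, and the proposed ``constant-rank'' integration of an infinitesimal kernel into an honest positive-dimensional fiber is unavailable without substantial further argument. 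The paper sidesteps all of this by constructing $\Omega_\Sigma$ by hand and verifying constancy of lengths directly, with dimension $2g-3$ in the closed case against $\dim\Flat(S)=12g-14$.
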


In particular, in the closed case, our construction produces $2g-3$ parameters for deformations,
while the dimension of $\Flat(S)$ in this case is $12g-14$.  

Another result needed for the proof of Theorem \ref{T:rigid iff dense} is a version of Thurston's theorem that the hyperbolic lengths for simple closed curves continuously extends to the space 
$\MF(S)$ of measured foliations (or laminations) on $S$.  In \cite{bon-currents}, Bonahon gave a very elegant proof of this (for closed surfaces) based on a unified approach to studying hyperbolic metrics, closed curves and laminations.  Bonahon's key idea is to embed $\euC(S)$, 
$\T(S)$ and $\MF(S)$, into the space of geodesic currents $\CC(S)$.  Our next result extends
the theory to flat structures.

\begin{theorem} \label{T:current embedding}
There is an embedding
\[ \Flat(S) \to \CC(S)\]
denoted by $q\mapsto L_q$ so that for $q\in\Flat(S)$ and $\alpha\in\euC$,
we have
$\I(L_q,\alpha)=\ell_q(\alpha)$.
Furthermore, after projectivizing, $\Flat(S)\to \PCC(S)$ is still an
embedding.
\end{theorem}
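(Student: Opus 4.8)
The plan is to realize $L_q$ as a directional average of the foliations determined by $q$, using Bonahon's embedding $\MF(S)\hookrightarrow\CC(S)$. For each angle $\theta\in[0,\pi)$ let $\foltheta$ be the measured foliation whose leaves are the $q$-geodesics in direction $\theta$ and whose transverse measure is induced by the flat structure; since $q$ is a unit-norm quadratic differential these directions are well defined modulo $\pi$, so $\theta\mapsto\foltheta$ is a loop of measured foliations, each of which is a geodesic current. I would then define
\[ L_q \;=\; \frac{1}{2}\int_0^\pi \foltheta\,d\theta, \]
interpreting the right-hand side as the $\pi_1(S)$-invariant Radon measure $f\mapsto\tfrac12\int_0^\pi\foltheta(f)\,d\theta$ on the space of geodesics. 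The first thing to check is that this is a legitimate current: the family $\theta\mapsto\foltheta$ is continuous into $\CC(S)$, so $\theta\mapsto\foltheta(f)$ is measurable for each compactly supported $f$, and a positive integral of positive invariant Radon measures is again one. Conceptually this is just the pushforward of flat Lebesgue measure on the unit tangent bundle, disintegrated by direction.

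Next I would verify the length identity for $\alpha\in\euC$. Because $\I(\,\cdot\,,\alpha)$ is a continuous linear functional on $\CC(S)$ it passes through the integral, giving
\[ \I(L_q,\alpha)\;=\;\frac12\int_0^\pi \I(\foltheta,\alpha)\,d\theta. \]
To evaluate $\I(\foltheta,\alpha)$ I would represent $\alpha$ by its $q$-geodesic, a concatenation of straight Euclidean segments of $q$-lengths $\ell_i$ in directions $\phi_i$. A straight segment never backtracks against $\foltheta$, so the geodesic realizes the transverse measure and
\[ \I(\foltheta,\alpha)\;=\;\sum_i \ell_i\,\bigl|\sin(\phi_i-\theta)\bigr|. \]
Since $\int_0^\pi|\sin(\phi_i-\theta)|\,d\theta=2$ for every $\phi_i$, integrating term by term yields $\I(L_q,\alpha)=\sum_i\ell_i=\ell_q(\alpha)$. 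The normalization $\tfrac12$ is pinned down by this computation (sanity check: on the flat torus a core curve gets length $1$).

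For injectivity I would invoke Theorem \ref{T:S rigid}: if $L_q=L_{q'}$ then $\ell_q(\alpha)=\I(L_q,\alpha)=\I(L_{q'},\alpha)=\ell_{q'}(\alpha)$ for all $\alpha\in\euS$, and spectral rigidity of $\euS$ over $\Flat(S)$ forces $q=q'$; alternatively, and avoiding any appearance of circularity, one recovers the whole family $\{\foltheta\}$, hence the flat metric, from $L_q$ by disintegrating along directions. Continuity of $q\mapsto L_q$ follows from continuity of $q\mapsto\foltheta$ together with dominated convergence over the compact parameter range $[0,\pi]$. To upgrade the continuous injection to an embedding I would check that $L_{q_n}\to L_q$ forces $q_n\to q$: continuity of $\I$ gives $\ell_{q_n}(\alpha)\to\ell_q(\alpha)$ for all $\alpha$, and the topology on $\Flat(S)$ is detected by these length functions.

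Finally, for the projectivized map $\Flat(S)\to\PCC(S)$ I would compute the self-intersection. Since $\I(\foltheta,\nu_q^{\theta'})=\area(q)\,|\sin(\theta-\theta')|$, the same argument of commuting $\I$ with the (now double) integral gives $\I(L_q,L_q)=c\,\area(q)$ for a fixed positive constant $c$; as every $q\in\Flat(S)$ has unit area, $\I(L_q,L_q)$ is a nonzero constant independent of $q$. Hence if $[L_q]=[L_{q'}]$, say $L_q=tL_{q'}$, then $t^2\I(L_{q'},L_{q'})=\I(L_q,L_q)$ forces $t=1$, and injectivity of the non-projective map (with its continuity) then yields the projective embedding. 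The main obstacle I anticipate is the geometric input behind the second display: proving that the straight-line $q$-geodesic simultaneously minimizes the $\foltheta$-transverse measure for almost every $\theta$, i.e.\ that Bonahon's purely topological intersection form really computes the flat transverse measures. This is exactly the quasi-transversality of $q$-geodesics to the directional foliations, which is where the cone-angle (nonpositive curvature) hypotheses enter, together with the observation that saddle connections in a fixed direction $\theta$ occur for only a measure-zero set of angles and so do not affect the integral.
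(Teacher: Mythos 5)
Your construction of $L_q$ as the directional average $\frac12\int_0^\pi\foltheta\,d\theta$, the length computation via saddle connections, injectivity via Theorem \ref{T:S rigid}, and the use of the constant self-intersection number to get projective injectivity are all exactly the paper's argument (Lemma \ref{L:length integral}, Proposition \ref{P:average}, and the proof of Theorem \ref{T:current embedding}). The gap is in the step where you upgrade the continuous injection to an embedding by asserting that ``the topology on $\Flat(S)$ is detected by these length functions.'' This is not a citable prior fact; it is equivalent to the statement being proved. Since the topology on $\CC(S)$ is itself detected by intersection numbers with curves (Theorems \ref{T:metric} and \ref{T:metric2}), the claim that $\ell_{q_n}(\alpha)\to\ell_q(\alpha)$ for all $\alpha$ forces $q_n\to q$ in $\Flat(S)$ \emph{is} the claim that $q\mapsto L_q$ is a homeomorphism onto its image. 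A continuous injection from a locally compact space into a Hausdorff space need not be an embedding, so one must actively rule out the scenario in which $q_n$ leaves every compact set of $\Flat(S)$ while every length still converges to the length in some genuine flat metric $q$; nothing in your proposal does this, and your final sentence on the projectivized map (``injectivity of the non-projective map with its continuity then yields the projective embedding'') glosses over the same issue.

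The paper closes exactly this hole with a properness argument: if $q_n$ exits every compact set of $\Flat(S)$, then after passing to a subsequence either (i) some $\gamma\in\euS$ has $\ell_{q_n}(\gamma)\to\infty$, in which case any projectively convergent subsequence of $L_{q_n}$ must be rescaled by factors $t_n\to 0$, so the limit satisfies $\I(L_\infty,L_\infty)=\lim t_n^2\cdot\pi/2=0$ and is therefore a measured lamination (this computation is borrowed from the proof of Theorem \ref{T:boundary}), which cannot be a point of the image since $\I(L_q,L_q)=\pi/2$ for every flat $q$; or (ii) there is a lamination $\lambda\in\ML$ with $\ell_{q_n}(\lambda)\to 0$, in which case any limit current has zero intersection with $\lambda$, whereas $\I(L_q,\lambda)=\overline{\ell}_q(\lambda)>0$ for every $q\in\Flat(S)$. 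Establishing that these two cases exhaust all ways of degenerating, and that the limits they produce lie outside the image of $\Flat(S)$, rests on Proposition \ref{P:compact currents}, Otal's separation theorem, and the degeneration analysis behind Theorem \ref{T:boundary}. You have the needed ingredients in hand (the constant self-intersection and the positivity of flat lengths of laminations), but the degeneration analysis itself is missing from your proposal, and it is the only part of the theorem that does not follow formally from Proposition \ref{P:average} together with Theorem \ref{T:S rigid}.
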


As a consequence, we obtain a continuous homogeneous extension of the flat length function in Corollary \ref{C:flat continuous}, 
\[ \Flat(S) \times \MF(S) \to \mathbb R, \]
making it meaningful to discuss the length of a foliation:

\begin{remark}
For the purpose of geodesic currents, punctured surfaces are treated as surfaces with holes; see Section \ref{preliminary section}.
\end{remark}

As $\PCC(S)$ is compact, Theorem \ref{T:current embedding} provides a compactification of $\Flat(S)$, 
and it is invariant under the action of
the mapping class group. Bonahon proved that for closed surfaces, the analogous compactification of $\T(S)$ is
precisely the Thurston compactification by projective measured laminations. For the compactification of $\Flat(S)$, we
also find a geometric interpretation of the boundary points as {\em mixed structures} on $S$.
A mixed structure is a hybrid of a flat structure on a subsurface (with boundary length zero) and a measured
lamination on the complementary subsurface.  We view the space of mixed structures as a subspace of $\CC(S)$, and
thus for any mixed structure $\eta$, there is a well-defined intersection number $\I(\eta, \cdot)$.
This theory is developed in Section \ref{boundaryflat}.

\begin{theorem} \label{T:boundary}
The closure of $\Flat(S)$ in $\PCC(S)$ is exactly the space $\PMix(S)$.
That is,
for any sequence $\{q_n\}$ in $\Flat(S)$, after passing to a subsequence
if necessary,
there exists a mixed structure $\eta$ and a sequence of positive real
numbers
$\{t_n\}$ so that
$$
\lim_{n \to \infty} t_n \ell_{q_n}(\alpha) = \I(\alpha,\eta).
$$
for every $\alpha \in \euC$.  Moreover, every mixed structure is a limit
of a sequence
in $\Flat(S)$.
\end{theorem}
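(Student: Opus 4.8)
The plan is to prove the two set-theoretic inclusions $\overline{\Flat(S)} \subseteq \PMix(S)$ and $\PMix(S) \subseteq \overline{\Flat(S)}$ separately, with all closures taken in $\PCC(S)$; the displayed limit formula is then a repackaging of the first inclusion together with continuity of the intersection pairing. The common starting point is that $\PCC(S)$ is compact and $\I \from \CC(S) \times \CC(S) \to \R$ is continuous, so given any sequence $\{q_n\} \subset \Flat(S)$ we may, after passing to a subsequence, choose $t_n > 0$ with $t_n L_{q_n} \to \mu$ in $\CC(S)$ for some current $\mu$. By Theorem~\ref{T:current embedding} and continuity of $\I$ we then have $t_n \ell_{q_n}(\alpha) = t_n \I(L_{q_n}, \alpha) \to \I(\mu, \alpha)$ for every $\alpha \in \euC$, so everything reduces to identifying which currents $\mu$ arise as such limits and matching them with the space of mixed structures developed in Section~\ref{boundaryflat}.

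For the inclusion $\overline{\Flat(S)} \subseteq \PMix(S)$, the heart of the argument is a geometric decomposition of a degenerating family of flat structures. Normalizing the $q_n$ to have unit area, I would examine the flat systoles and separate $S$ into the subsurface $Y_n$ on which the flat geometry remains non-degenerate (flat systole bounded below, diameter bounded) and the complementary region, along which some curves are being pinched or the metric is collapsing in the transverse direction. After further subsequencing the topological type of $Y_n$ stabilizes to a fixed (possibly disconnected, possibly empty) $Y$. On $Y$ a Mumford-type compactness statement for flat structures with systole bounded below lets me extract, after rescaling, a limiting flat structure $q_Y$ whose contribution to $\mu$ is the flat current $L_{q_Y}$; the curves comprising $\partial Y$ are exactly the pinched ones and hence have length zero in the limit. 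On the complement the rescaled metric degenerates in the manner of a stretching quadratic differential, so that the length of a curve supported there is asymptotically governed by its intersection with a limiting foliation, and the corresponding part of $\mu$ is a measured lamination $\lambda$ on $S \setminus Y$. The limit current is then $\mu = L_{q_Y} + \lambda$, which is precisely a mixed structure, and the vanishing of the lengths of $\partial Y$ guarantees the additive form $\I(\mu, \alpha) = \I(L_{q_Y}, \alpha) + \I(\lambda, \alpha)$.

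For the reverse inclusion I would build, for a given mixed structure $\eta$ with flat part $q_Y$ on $Y$ and measured lamination $\lambda$ on $S \setminus Y$, an explicit approximating sequence in $\Flat(S)$. On the complementary subsurface I realize $\lambda$ as a limit of horizontal foliations of a family of quadratic differentials stretched in the transverse direction, so that after a scale factor $s_n \to 0$ the flat lengths of curves there converge to $\I(\lambda, \cdot)$; simultaneously I place the scaled flat structure $\tfrac{1}{s_n} q_Y$ on $Y$ so that these lengths survive at the common scale $s_n$. Gluing the pieces along the curves of $\partial Y$, pinched through long thin flat necks, produces an honest flat structure $q_n$ on all of $S$ (one must check the cone-angle and area conditions), and taking $t_n = s_n$ one verifies $t_n L_{q_n} \to L_{q_Y} + \lambda$ by evaluating intersection numbers against curves and invoking the embedding of Theorem~\ref{T:current embedding} together with continuity of $\I$.

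I expect the principal obstacle to be the forward decomposition: one must prove that the non-degenerate subsurface $Y$ and its limiting flat structure are well-defined (the compactness and the stabilization of topological type), that the collapsing complement contributes exactly a measured lamination and no residual flat area, and---most delicately---that a single sequence of scale factors $t_n$ simultaneously keeps both the flat part and the lamination part finite and nonzero, so that genuinely mixed limits appear rather than purely flat or purely laminar ones. Establishing the additivity of $\I(\mu, \cdot)$ across the pinched boundary, and the matching bookkeeping in the reverse construction, will rely on the structural results on $\Mix(S)$ from Section~\ref{boundaryflat}.
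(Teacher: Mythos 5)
Your reverse inclusion ($\PMix(S) \subseteq \overline{\Flat(S)}$) is essentially the paper's construction---slits, thin cylinders for the atomic part of $\lambda$, and Teichm\"uller-stretched differentials $\tfrac1n A_n q'$ for the minimal part---and is fine in outline. The forward inclusion, however, has a genuine gap: your criterion for which part of the surface contributes a flat piece to the limit is metric thickness of the unit-area structures (flat systole bounded below, bounded diameter), and that criterion is simply wrong. Consider a Teichm\"uller geodesic ray $q_t$ whose vertical foliation $\nu_0$ is uniquely ergodic of bounded type, so that the ray stays in the thick part of moduli space forever: every $q_t$ has systole and diameter uniformly bounded, your decomposition would give $Y_n = S$, and ``Mumford-type compactness'' would lead you to claim a flat limit on all of $S$. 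But the actual limit in $\PCC(S)$ is the pure measured lamination $\nu_0$ (this is exactly Proposition \ref{P:affinepaths} of the paper). The failure is that Mumford/McMullen compactness only produces \emph{unmarked} (geometric) limits: the remarking homeomorphisms can diverge in the mapping class group, in which case no marked flat limit exists and the limiting current is laminar even though nothing is metrically degenerating. Thickness of $q_n$ says nothing about the limit current.

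The paper avoids this with two ingredients you are missing. First, a scale dichotomy: since $\I(L_{q_n},L_{q_n}) = \pi/2$, if the projective scales $t_n \to 0$ then the limit has zero self-intersection and is a measured lamination outright (this disposes of the thick-ray example above); otherwise one may take $t_n \to 1$ and work with an honest limit $L_{q_n} \to L_\infty$ in $\CC(S)$. Second, the dichotomy on components $W$ of the complement of the collapsed subsurface $Z_0$ is phrased not in terms of the systole of $q_n|_W$ but in terms of the \emph{limit current}: whether $\inf\bigl\{ \I\bigl(L_\infty, \alpha/\ell_{q_0}(\alpha)\bigr) : \alpha \in \euS(W)\bigr\}$ is positive or zero. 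Positivity of this infimum over \emph{all} curves in $W$ (a strictly stronger condition than a systole bound) is what allows one to promote McMullen's geometric convergence to marked convergence, i.e.\ to conclude the remarkings are eventually isotopic to the identity and that $q_n|_W$ converges to a genuine flat structure on $W$; when the infimum is zero, one extracts a filling lamination on $W$ instead. Moreover, to localize $L_\infty$ to subsurfaces at all---to show its support is disjoint from $Z_0$ and that its restriction to a Case-2 component is a lamination---the paper relies on Proposition \ref{P:uniform convergence}, the uniform convergence of the direction-foliation maps $\theta \mapsto \nu_{q_n}^\theta$, which gives $\I(L_\infty,\alpha) = \tfrac12\int_0^\pi \I(f_\infty(\theta),\alpha)\,d\theta$. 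You have no substitute for this tool, and without it the claim that the collapsing region contributes ``exactly a measured lamination and no residual flat area'' is an assertion, not an argument.
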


In Sections \ref{boundaryflat} and \ref{S:Finalremarks} we make several other comparisons between this compactification and the Thurston compactification of $\T(S)$.

\subsection{Context: Other spectral rigidity results} \label{S:context}

Spectral rigidity of $\euS$ over $\T(S)$ was generalized considerably by Otal
\cite{otal}, who showed that $\euC$ is spectrally
rigid over $\calG_-(S)$, the space of all negatively
curved metrics on $S$ up to isotopy.  Hersonsky-Paulin
\cite{her-paul} generalized this further to show that $\euC$ is
spectrally rigid over negatively curved cone metrics.
This was pushed in a different direction by Croke \cite{croke},
Fathi \cite{fathi} and Croke-Fathi-Feldman
\cite{croke-ff} where it was shown that $\euC$ is spectrally rigid for
various qualities of nonpositively curved
metrics (for more precise statements, see the references).

While these results allow for rather general classes of metrics, the use
of all closed curves, not just the simple
ones, is essential.  Indeed, it follows from a result of Birman-Series
\cite{birmanseries} that, in general, we should
\textit{not} expect $\euS$ to be spectrally rigid for an arbitrary class
of negatively curved metrics, since simple closed curves miss most of the surface (see \S
\ref{S:Finalremarks}).

We saw above in Theorem \ref{T:rigid iff dense} that a set of curves must be dense in 
the sphere $\PMF$ in order to be spectrally rigid over $\Flat(S)$.
This stands in contrast with the situation for hyperbolic metrics, where it is known that there are finite spectrally 
rigid sets; in fact, $2\xi+1$ curves, one more than the dimension of $\T(S)$, are sufficient 
(see \cite{ham6g-5+2n, ham6g-5,schmutz}).  
In this regard, $\Flat(S)$ bears a resemblance 
to  Outer space, $\CV(F_n)$. The Culler--Vogtmann Outer space, built to study the 
group $Out(F_n)$ in analogy to the relationship between $\T(S)$ and the mapping class 
group, consists of
metric graphs $X$ equipped with a isomorphisms $F_n \to \pi_1(X)$ (under
the equivalence relation of graph isometries which respect the isomorphism up to 
conjugacy). Recycling notation suggestively, let $\euC$ denote the set of conjugacy 
classes of nontrivial elements of $F_n$. Given an element $X \in
\CV(F_n)$, and a conjugacy class $\alpha\in\euC$,  we write
$\ell_X(\alpha)$ for the minimal-length representative of $\alpha$ in $X$.
We can define a length spectrum just as above:
$$
\lambda_\Sigma(X) = (\ell_X(\alpha))_{\alpha \in \Sigma} \in \mathbb R^\Sigma
$$ 
for $X \in \CV(F_n)$ and $\Sigma\subset\euC$.
Accordingly, we say that $\Sigma$ is spectrally rigid over $\CV(F_n)$
if $X \mapsto \lambda_\Sigma(X)$ is injective.

The full set $\euC$ is spectrally rigid over $\CV(F_n)$
~\cite{alperin-bass,culler-morgan}. However, Smillie and Vogtmann
(expanding on a similar result of Cohen,
Lustig and Steiner ~\cite{c-l-s}) showed that no finite subset
$\Sigma\subset \euC$ is spectrally rigid over Outer
space (or even the reduced Outer space) by finding a $(2n-5)$-parameter
family of graphs over which
$\lambda_\Sigma$ is constant \cite{smillie-vogtmann}. Thus, Theorem
\ref{T:nonrigid} is the analog for $\Flat(S)$ of the Smillie--Vogtmann
result. Our proof of Theorem \ref{T:nonrigid} adapts the key idea from
Smillie--Vogtmann to surfaces by appealing to Thurston's theory of train
tracks; see \S \ref{S:families}.
This justifies the remark that from the point of view of length-spectral rigidity,
flat metrics might be said to resemble metric graphs more 
closely than hyperbolic metrics.

Finally, we briefly consider unmarked inverse spectral problems for the metrics in
$\Flat(S)$. Kac memorably asked in 1966 whether
one can ``hear the shape of a drum,'' or determine a planar region by
the eigenvalues of its Laplacian. Sunada's work
in the 1980s established a means of generating examples of hyperbolic
surfaces which are not only isospectral with
respect to their Laplacians, but iso-length-spectral as well. That is,
let the {\em unmarked length spectrum} be the
nondecreasing sequence of numbers
$$\Lambda_\euC(\rho)=\{\ell_\rho(\gamma_1)\le \ell_\rho(\gamma_2)\le
\cdots
\}_{\gamma_i\in\euC},$$
appearing as lengths of closed curves on $S$, listed with multiplicity.
Sunada's construction produces a supply
of examples of hyperbolic metrics $m,m'$ such that
$\Lambda_{\euC}(m)=\Lambda_{\euC}(m')$.
In \S~\ref{sunada}, we remark that the
Sunada construction carries over to our flat metrics in the same way.

\smallskip

\noindent
{\bf Acknowledgments.} This work began while all three authors were
visiting MSRI in Berkeley, CA for the Fall 2007 programs on
\textit{Teichm\"uller theory and Kleinian groups} and \textit{Geometric
group theory}.  We would like to thank MSRI and all of the participants
for providing such a stimulating mathematical environment.  We would
also like to thank Daryl Cooper for discussions of his work on
projective structures that suggested the idea of mixed structures.

\section{Preliminaries:  Flat structures and geodesic currents}
\label{preliminary section}

In this section, we will briefly describe the background and preliminary
material on \Teich theory, semi-translation surfaces, flat
metrics, and Bonahon's theory of geodesic currents.  We refer the
reader to  \cite{bonahon-bouts}, \cite{bon-currents}, \cite{gardiner-lakic}, \cite{pap-handbook}, and \cite{strebel}.

In what follows, $S$ is a finite-type surface.  That is, $S$ is obtained from a closed surface $\hat S$ by removing a finite set $P \subset \hat S$ of marked points.  The genus $g$ and number of punctures $n = |P|$ determine the topological complexity 
$$\xi=\xi(S) = 3g-3 + n.$$
Recall that \Teich space $\T(S)$, parametrizing the hyperbolic metrics on $S$ together with a marking of their
curves, is homeomorphic to a ball of dimension $2\xi$.

\subsection{Quadratic differentials and semi-translation structures}

By a \emph{quadratic differential} on $S$ we mean a complex structure on
$\hat S$ together with an integrable meromorphic quadratic differential.
The quadratic differential is allowed to have poles of degree one at
marked points and is assumed to be holomorphic on $S$. The space of all quadratic 
differentials, defined up to isotopy, is denoted $\calQ(S)$. A point of $\calQ(S)$ will 
be denoted $q$, with the underlying complex structure implicit in the notation.  
Reading off the complex structures, we obtain a projection to the \Teich space
\[ \pi: \calQ(S) \to \T(S).\]
This projection is canonically identified with the cotangent bundle to
$\T(S)$; hence $\calQ(S)$ has a real dimension of $4\xi$.

Integrating the square root of a nonzero quadratic differential $q$ in a
small neighborhood of a point where $q$ is
nonzero produces \textit{natural coordinates} $\zeta$ on $S$ in which $q
= d\zeta^2$.  The collection of all natural coordinates gives an atlas
on the complement of the zeros of $q$ for which the transition functions
are given by maps of the form $z \mapsto \pm z + c$ for $c \in \C$ (called
\textit{semi-translations}).
The Euclidean metric is preserved by these transition functions
and so pulls back to a Euclidean metric on the complement of the zeros
of $q$ in $S$.  The integrability of $q$ implies that the metric has
finite total area.

The completion of the metric is obtained by replacing the zeros of $q$
as well as the points $P$ to obtain
the surface $\hat S$.
If $q$ has a zero of order $p$ at one of the completion points, then
there is a cone singularity with cone angle $(2+p)\pi$ (here a pole at a
point of $P$ is thought of as a zero of order $-1$).  Thus the metric on $S$ 
is locally $\CAT(0)$ (or nonpostiively curved in the sense of comparison geometry), 
although the metric on $\hat S$
may not be. We also use $q$ to denote the completed metric on $\hat S$. 

A \textit{semi-translation structure} is a locally $\CAT(0)$ Euclidean
cone metric on $S$, whose completion is $\hat S$,
together with an atlas defining the metric away from the cone points for which the transition functions
are semi-translations.
The atlas determines (and is determined by) a preferred vertical direction.
Given a semi-translation structure, there is a unique complex structure and integrable holomorphic
quadratic
differential for which the charts in the atlas are natural coordinates.
This determines a bijection between the set of nonzero quadratic differentials and
the set of semi-translation structures on $S$, which we use to identify the two
spaces.
The \Teich metric is induced by the co-norm on $\calQ(S)$ which comes from the
area
of the associated semi-translation structure on $S$.  The unit cotangent
space, $\QQ(S)$,
is thus precisely the set of unit-area semi-translation structures on
$S$.

A semi-translation structure can also be described combinatorially as a
collection of (possibly punctured)
polygons in the Euclidean plane with sides identified in pairs by an isometry
which is the restriction of a semi-translation.

The group $\SL_2(\R)$ acts naturally
on the space of quadratic differentials by
$\R$--linear transformation on the natural coordinates.
The geodesics in the \Teich metric are precisely
projections to $\T(S)$ of orbits of the $\SL_2(\R)$ diagonal on an
initial quadratic differential $q_0$:
$$\gamma(t)=\left\{ \pi(A_t.q_0) : A_t=\matrix {e^t}00{e^{-t}} , t\in\R
\right\} .$$
The {\em Teichm\"uller disk} $\H_q$ of a quadratic differential $q$ is the
projection to $\T(S)$ of its entire $\SL_2(\R)$
orbit; it is an isometrically embedded copy of the hyperbolic plane of curvature $-4$.

We let $p:\widetilde S \to S$ denote the universal covering of $S$, with $\pi_1(S)$ acting by covering transformations.
The metric $q$ pulls back to a metric $\tilde q = p^*(q)$ on $\widetilde S$ which is again locally $\CAT(0)$.
When $S$ is a closed surface, $(\widetilde S,\tilde q)$ is a complete, geodesic $\CAT(0)$ space.
If $S$ has punctures, then $(\widetilde S,\tilde q)$ is incomplete, and we write $(\bar S,\tilde q)$ for the completion, obtaining a geodesic $\CAT(0)$ space.
The covering $p:\widetilde S \to S$ can be extended to the completions which we also 
denote by $p$.
This extension can be be viewed as a branched cover, infinitely branched over $P$, and we let $\widetilde P$ denote the preimage of $P$ in $\bar S$.

\subsection{Measured foliations and measured laminations}

We write $\MF=\MF(S)$ for the space of (measure classes
of) measured foliations on $S$, and 
$\PMF=\PMF(S)$ to denote projective measured foliations.  A curve
$\alpha \in \euS$ canonically determines a measured foliation
with all nonsingular leaves closed and homotopic to $\alpha$.  
We use this to view $\mathbb R_+ \times \euS$
and $\euS$ as subsets of $\MF$ and $\PMF$, respectively.
We also write
\[
\I:\MF \times \MF \to \mathbb R
\]
for Thurston's geometric intersection number.  This is the unique
homogeneous continuous extension of the usual
geometric intersection number on $\euS \times \euS$, via the
inclusions described above.

The vertical foliation for a nonzero quadratic differential $q\in
\calQ(S)$ is given by $| \Real(\sqrt{q}) |$. Let
$\foltheta$ be the foliation  $| \Real( e^{i\theta}\sqrt q ) |$ for
$\theta\in\RP^1$, so that the vertical foliation
of $q$  is $\nu_q:=\nu_q^0$. By setting
$$\MF(q):=\{   \  t\cdot\foltheta \   :  \  \theta\in \RP^1, t\in\R_+
\},$$ we obtain the set of all measured
foliations which are straight in some direction on $q$, with measure
proportional to Euclidean distance between leaves.
We write $\PMF(q)$ for the projectivization of $\MF(q)$.

It will be useful to pass back and forth between measured foliations and
measured laminations.  We denote the space of
measured laminations by $\ML$ and the projective measured laminations by
$\PML$.  We identify $\MF$ with  $\ML$ and
$\PMF$ with $\PML$ in the natural way extending the canonical
inclusions of $\euS$.  See \cite{levitt} for an
explicit procedure for constructing laminations from foliations.

\subsection{Flat structures} \label{S:flatstructures}

Quadratic differentials that represent the same metric differ only by a rotation.
Accordingly, the space of \textit{flat metrics} is defined as
$$\Flat(S)=\QQ(S) \Big/  {q\sim e^{i\theta}q}.$$
Equivalently, an element of $\Flat(S)$ is a Euclidean cone metric on $S$
which is locally $\CAT(0)$, with holonomy in $\{\pm
I\}$, completion $\hat S$, and total area one. This is almost identical
to the notion of a quadratic differential, but
there is one missing piece of data, namely the preferred vertical direction which
is determined by the atlas of natural
coordinates.  We write $q$ to denote a point in $\QQ(S)$ or the
associated equivalence class in $\Flat(S)$.  Note that
$\MF(q)$ and $\PMF(q)$ are well-defined for $q\in\Flat(S)$.  Also, each Teichm\"uller disk
$\H_q$ lifts to an embedded disk in $\Flat(S)$, and in fact,
$\Flat(S)$ is foliated by \Teich disks.

\subsection{Geodesics}

Let $q$ be a quadratic differential on $S$ and $(\bar S,\tilde q)$ the metric completion 
of the metric pulled back to the universal cover as described above. Every curve 
$\alpha \in \euC$ has a \emph{$q$--geodesic representative} in the following sense; 
for a map $\alpha \from S^1 \to S$ from the unit circle to $S$, there is an isometry 
$\tilde \alpha_q \from \R \to (\bar S,\tilde q)$ such that a subgroup of
$\pi_1(S)$ corresponding to the curve $\alpha$ preserves the image
$\tilde \alpha_q(\R)$.  The projection of this to $\hat S$ is the
$q$--geodesic representative of $\alpha$ and we denote it by $\alpha_q$.
(See ~\cite{rafi} for more details.) We call the isometry $\tilde \alpha_q$, or any 
$\pi_1(S)$--translate of it, a \emph{lift} of $\alpha_q$.

The geodesic representative of $\alpha$ is unique (up to parameterization), except when there are a 
family of parallel geodesic representatives foliating a flat cylinder. The geodesic 
representative of a simple closed curve need not be simple, and the geodesic representatives of different curves may not be different. For example, curves that 
go around a puncture different number of times can have the same geodesic 
representative that passes through the puncture (the number of times a curve
goes around the puncture is not detectable from the geodesic representative). 
However, for every curve $\alpha$, there is always a sequence of representatives 
of the homotopy class of $\alpha$ in $S$ converging uniformly to $\alpha_q$.

When $S$ is a punctured surface, we will also be interested in homotopy classes of essential proper paths in $S$.
These are paths $\alpha \from I \to \hat S$, defined on some closed interval $I$, for which the interior of $I$ is mapped to $S$ and the endpoints are mapped to $P$.
Here, two such paths are homotopic if there is a homotopy relative to the endpoints so that throughout the homotopy the interior of $I$ is mapped to $S$.
We denote the set of all homotopy classes of essential curves and paths by $\euC'(S)$, which is equal to $\euC(S)$ if $S$ is closed.
Every element of $\euC'(S)$ has a unique geodesic representative, which we view as 
the projection of an isometry $\tilde \alpha_q \from I \to (\bar S,\tilde q)$ to $\hat S$, and is again denoted by $\alpha_q$.
Again, $\alpha_q$ is a uniform limit of representatives of the homotopy class of $\alpha$.

When a curve $\alpha$ has non-unique geodesic representatives that foliate 
a cylinder, we say $\alpha$ is a \textit{cylinder curve} and we define the
\textit{cylinder set} of $q$, denoted by $\cyl(q)$, to be the set of all cylinder
curves with respect to $q$.

A \textit{saddle connection} is a geodesic segment whose endpoints are
(not necessarily distinct) singularities or points of $P$, and
which has no singularities in its interior.  When $\alpha \in \euC'(S)$ is not a cylinder curve,
the (unique) geodesic representative is made up of concatenations of saddle connections.  (In fact,
each boundary component of a cylinder
is a union of saddle connections, so even cylinder curves have
representatives of this form.)  If we write this
concatenation as
\[\alpha_q = \alpha^1\cdots\alpha^k, \]
and let $r_j$ denote the Euclidean length of  $\alpha^j$, then  $\ell_q(\alpha)$ is
just $r_1 + \cdots + r_k$.

If we view $q$ as a quadratic differential (and not just as a flat structure), then each $\alpha_j$
makes some angle $\theta_j$ with the horizontal
direction.

\begin{lemma} \label{L:length integral}
For all $q \in \QQ(S)$ and $\alpha \in \euC'(S)$, we have
\[ \ell_q(\alpha) = \frac{1}{2} \int_0^\pi \I(\foltheta,\alpha) d
\theta.\]
\end{lemma}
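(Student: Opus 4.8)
The plan is to reduce the identity to a pointwise computation on each saddle connection and then integrate in $\theta$. Since the geodesic representative decomposes as a concatenation $\alpha_q = \alpha^1\cdots\alpha^k$ of saddle connections of Euclidean lengths $r_1,\dots,r_k$ with $\ell_q(\alpha) = r_1+\cdots+r_k$, it suffices to understand $\I(\foltheta,\alpha)$ as a function of $\theta$. First I would express the transverse measure of $\foltheta$ along a single saddle connection. In a natural coordinate $\zeta = x+iy$ we have $\sqrt q = d\zeta$ and $\foltheta = |\Real(e^{i\theta}\,d\zeta)| = |\cos\theta\, dx - \sin\theta\, dy|$; parametrizing $\alpha^j$ by arc length in the direction making angle $\theta_j$ with the horizontal gives $dx = \cos\theta_j\, dt$ and $dy = \sin\theta_j\, dt$, so the transverse measure accrued along $\alpha^j$ is $\int_0^{r_j}|\cos(\theta+\theta_j)|\,dt = r_j|\cos(\theta+\theta_j)|$.

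The second step is to argue that summing these contributions actually computes the intersection number, i.e.\ that
\[ \I(\foltheta,\alpha) = \sum_{j=1}^k r_j\,|\cos(\theta+\theta_j)|. \]
Granting this, the lemma follows by integrating termwise and using the elementary identity $\int_0^\pi |\cos(\theta+\theta_j)|\,d\theta = \int_0^\pi|\cos u|\,du = 2$, valid for every $\theta_j$ since $|\cos|$ is $\pi$--periodic with integral $2$ over a period:
\[ \frac12\int_0^\pi \I(\foltheta,\alpha)\,d\theta = \frac12\sum_{j=1}^k r_j\int_0^\pi|\cos(\theta+\theta_j)|\,d\theta = \sum_{j=1}^k r_j = \ell_q(\alpha). \]

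The main obstacle is the displayed identity for $\I(\foltheta,\alpha)$, which amounts to showing that the flat geodesic $\alpha_q$ is in minimal position with respect to every straight foliation $\foltheta$. The point is that within a single saddle connection the one-form $\Real(e^{i\theta}\,d\zeta)$ has constant sign, so there is no cancellation and the transverse measure along $\alpha_q$ is exactly the sum above; the remaining content is that no homotopy of $\alpha$ can decrease the total transverse measure. I would establish this by showing that each straight saddle connection is quasi-transverse to the straight foliation $\foltheta$---it either crosses the leaves monotonically or, when $\cos(\theta+\theta_j)=0$, runs along a leaf and contributes nothing---so that the union $\alpha_q$ realizes the infimum defining $\I(\foltheta,\alpha)$; here it is convenient to pass to the associated measured lamination and use that a flat geodesic meets it efficiently. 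The finitely many directions $\theta$ for which some $\alpha^j$ lies in a leaf of $\foltheta$ have measure zero and hence do not affect the integral. Finally, the same decomposition and argument apply verbatim when $\alpha\in\euC'(S)$ is an essential proper path, whose geodesic representative is again a unique concatenation of saddle connections, so no separate treatment is needed.
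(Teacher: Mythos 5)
Your proposal is correct and takes essentially the same route as the paper: the paper's proof is exactly the computation you give, expressing $\I(\foltheta,\alpha)$ as the sum of transverse measures $r_j\,|\cos(\theta+\theta_j)|$ over the saddle connections of $\alpha_q$ and integrating via $\int_0^\pi |\cos(\theta+\theta_j)|\,d\theta = 2$. The only difference is that you explicitly flag and sketch the justification of the minimal-position step (that the flat geodesic is quasi-transverse to each straight foliation and hence realizes $\I(\foltheta,\alpha)$, with the finitely many parallel directions being measure zero), a point the paper's proof uses silently in its first equality.
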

\begin{proof}
This is a computation:
\begin{align*}
\int_0^\pi \I(\foltheta,\alpha) \, d \theta
 & = \int_0^\pi \left( \sum_{j=1}^k \int_{\alpha_j} | \Real(
e^{i\theta}\sqrt q \, ) | \right) d\theta\\
 & = \sum_{j=1}^k \int_0^\pi r_j |
\cos( \theta + \theta_j ) | \, d \theta =  \sum_{j=1}^k 2r_j \, = \, 2
\ell_q(\alpha). \qedhere
\end{align*}
\end{proof}

While the $q$--geodesics $\alpha_q$ and $\beta_q$ are not necessarily
embedded or transverse, they do meet minimally in
a certain sense.  Namely, appealing to the $\CAT(0)$ structure, we first
note that any two lifts $\widetilde \alpha_q$
and $\widetilde \beta_q$ meet in a point, in a geodesic segment, or they
are disjoint.  If the endpoints at infinity
of $\widetilde \alpha_q$ and $\widetilde \beta_q$ nontrivially link,
then we call these intersections \textit{essential
intersections}. It follows that $\I(\alpha,\beta)$ is the number of
$\pi_1(S)$--orbits of essential intersections over
all lifts of $\alpha_q$ and $\beta_q$.

\begin{remark}\label{necsuffgeo}
We make an elementary but very useful observation that identifies the geodesics in a flat
metric $q$.  First consider the case that $S$ is closed.
Given a representative of $\alpha$ built as a concatenation of saddle connections $\alpha^1 \cdots \alpha^k$,
a necessary and sufficient condition for this to be a $q$-geodesic is that the angles
between successive $\alpha^i$ measure at least $\pi$ on both sides.
When $P$ is nonempty, we need to modify this slightly.
Suppose $\alpha^1 \cdots \alpha^k$ is a representative of $\alpha$ in $\hat S$ and consider a lift of this 
representative to $\bar S$; that is, $\alpha^1\cdots \alpha^k$ is a limit of representatives of $\alpha$ in $S$ and
the lift is a limit of lifts.  
Then an angle of at least  $\pi$ is subtended at each point in $\widetilde P$.
(Note that points of $\widetilde P$ are on the boundary, so there is a unique well-defined angle at each such 
point met by the lift.)
\end{remark}

\subsection{Geodesic currents}

For this discussion, we first restrict to the closed case ($P=\emptyset$).  We fix any
geodesic metric $g$ on $S$. We can pull back this metric to the
universal covering $p:\tilde S \to S$, so that the
covering group action of $\pi_1(S)$ on $\tilde S$ is by isometries.  We
let $\tilde S_\infty$ denote the Gromov
boundary of $\tilde S$, making $\tilde S \cup \tilde S_\infty$ into a
closed disk.  This compactification is
independent of the choice of metric (in the sense that a different
choice of metric gives an alternate compactification
for which the identity extends to a homeomorphism of the boundary
circles).

We consider the space
\[ \G(\tilde S) = (\tilde S_\infty \times \tilde S_\infty \setminus
\Delta) \Big/ (x,y) \sim (y,x). \]
With respect to our metric,
this is precisely the space of unoriented
bi-infinite geodesics in $\tilde S$ up to
bounded Hausdorff distance.  We endow $\G(\tilde S)$ with
the diagonal action of $\pi_1(S)$.

A \textit{geodesic current on $S$} is a $\pi_1(S)$--invariant Radon
measure on $\G(\tilde S)$.  The set of all geodesic
currents is made into a (metrizable) topological space by imposing the
weak* topology, and we denote this space
$\CC(S)$. The associated space of \textit{projective currents} is the
quotient of the space of nonzero currents by positive real scalar multiplication,
and we denote it $\PCC(S)$.

The simplest examples of geodesic currents are defined by closed curves
$\alpha \in \euC$ as follows. Given such a
curve $\alpha$, we first realize it by a geodesic representative (with
respect to our fixed metric). The preimage
$p^{-1}(\alpha)$ in $\tilde S$ determines a discrete subset of $\G(\tilde S)$
(independent of the metric), and to this we can
associate a Dirac measure on $\G(\tilde S)$, for which
$\pi_1(S)$--invariance follows from the invariance of $p^{-1}(\alpha)$.
This injects
the set $\euC$ into $\CC(S)$, and we will thus view $\euC$ as a
subset of $\CC(S)$ when convenient.  While these
are very special types of geodesic currents, the set of positive real multiples
of all curves is in fact dense in $\CC(S)$, as shown in 
\cite{bon-currents}.

In \cite{bonahon-bouts}, Bonahon constructs a continuous extension for the intersection number to all currents.
\begin{theorem}[Bonahon] \label{T:i cont} The geometric intersection
number $\I:\euC(S) \times \euC(S) \to \R$ has a
continuous, bilinear extension
\[\I: \CC(S) \times \CC(S) \to \R.\]
\end{theorem}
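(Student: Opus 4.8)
\section*{Proof proposal}

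The plan is to follow Bonahon's strategy, giving the intersection number a measure-theoretic meaning as the total mass of a ``coincidence measure'' on pairs of transverse geodesics. Working in the closed case set up above, I would first record the geometric picture. Two elements $g,h\in\G(\tilde S)$, regarded as bi-infinite geodesics in $\tilde S$, \emph{link} exactly when their four endpoints alternate around the boundary circle $\tilde S_\infty$, equivalently when they cross transversely in $\tilde S$; this is precisely the notion of essential intersection already introduced. Let $\calL\subset\G(\tilde S)\times\G(\tilde S)$ denote the set of linking pairs. Alternation of four boundary points is an open condition, so $\calL$ is open, and it is invariant under the diagonal action $\gamma\cdot(g,h)=(\gamma g,\gamma h)$ of $\pi_1(S)$.

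The key geometric input is that $\pi_1(S)$ acts properly discontinuously and cocompactly on $\calL$, so that $\calL/\pi_1(S)$ is compact. Here one uses that a linking pair determines an honest crossing point in $\tilde S$, and since $\pi_1(S)$ acts cocompactly on $\tilde S$ in the closed case, any configuration can be translated so that its crossing point lies in a fixed compact region, while only finitely many group elements keep a compact family of configurations near a given one. Granting this, for currents $\mu,\nu\in\CC(S)$ the product measure $\mu\times\nu$ restricted to the invariant set $\calL$ descends to a finite Radon measure on the compact quotient, and I would \emph{define}
\[ \I(\mu,\nu) = (\mu\times\nu)\big(\calL/\pi_1(S)\big), \]
its total mass, which is finite by compactness. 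Agreement with the geometric intersection number on $\euC\times\euC$ is then bookkeeping: for curves $\alpha,\beta$ the associated currents are sums of Dirac masses on the lifts of $\alpha_q,\beta_q$, and the $\pi_1(S)$--orbits of linking lift-pairs are exactly the essential intersections counted earlier. Bilinearity is immediate, since $\mu\times\nu$ depends bilinearly on $(\mu,\nu)$.

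The main obstacle is continuity. Weak* convergence $\mu_n\to\mu$ and $\nu_n\to\nu$ yields weak* convergence $\mu_n\times\nu_n\to\mu\times\nu$ (a standard consequence of convergence of the factors), but the total mass on the \emph{open} set $\calL$ is only lower semicontinuous in general, and mass could a priori escape as two geodesics degenerate to share an endpoint. To obtain genuine continuity I would replace the indicator of $\calL$ by a continuous compactly supported test function. Using proper discontinuity and cocompactness of the action on $\calL$, one builds a continuous $\phi\colon\calL\to[0,\infty)$ with compact support in $\calL$ and $\sum_{\gamma\in\pi_1(S)}\phi(\gamma\cdot x)=1$ for every $x\in\calL$ (a standard equivariant partition of unity assembled from a bump function covering a fundamental region). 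Then, since $\phi$ is $\pi_1(S)$--summing to $1$,
\[ \I(\mu,\nu) = \int_{\calL}\phi\,d(\mu\times\nu), \]
and as $\phi$ is continuous with compact support inside the open set $\calL\subset\G(\tilde S)\times\G(\tilde S)$, weak* convergence of the product measures gives $\I(\mu_n,\nu_n)\to\I(\mu,\nu)$. The two delicate points to pin down are thus the cocompactness of the $\pi_1(S)$--action on $\calL$ and the existence of the equivariant bump function $\phi$; once these are in hand, bilinearity and the match with the curve pairing are formal, and continuity reduces to the elementary fact that integration against a fixed continuous compactly supported function is weak*-continuous.
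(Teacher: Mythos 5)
Your overall framework is indeed Bonahon's: the paper does not prove Theorem \ref{T:i cont} at all but imports it from Bonahon's \emph{Bouts des vari\'et\'es hyperboliques de dimension 3}, where the intersection number is defined exactly as you propose, as the total mass of $\mu\times\nu$ on the quotient of the linking set by $\pi_1(S)$; your verification of bilinearity and of agreement with the geometric pairing on $\euC\times\euC$ is also fine. However, the claim you isolate as ``the key geometric input'' is false: the action of $\pi_1(S)$ on $\calL$ is properly discontinuous but \emph{not} cocompact. Sending a linking pair to its crossing point gives an equivariant map $\calL\to\tilde S$ whose fiber over $x$ is the set of pairs of \emph{distinct} unoriented directions at $x$, i.e.\ $(\RP^1\times\RP^1)\setminus\Delta$, which is not compact; hence $\calL/\pi_1(S)$ fibers over the closed surface $S$ with non-compact fibers. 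Concretely, a sequence of pairs $(g_n,h_n)$ crossing at a fixed point with angle tending to $0$ has no subsequence converging in $\calL/\pi_1(S)$, since any limit would be a pair of equal geodesics, which is excluded from $\calL$. For the same reason the equivariant bump function $\phi$ you want cannot exist: if $\phi$ were continuous, compactly supported in $\calL$, and satisfied $\sum_\gamma \phi(\gamma\cdot x)=1$ for every $x\in\calL$, then the induced function on $\calL/\pi_1(S)$ would be identically $1$ yet compactly supported, forcing the quotient to be compact.

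With cocompactness gone, two steps of your proposal are unproven. First, finiteness of $(\mu\times\nu)\bigl(\calL/\pi_1(S)\bigr)$ no longer follows and requires an argument. Second, and more seriously, the continuity proof collapses: as you yourself note, weak* convergence only gives lower semicontinuity of the mass of the open set $\calL$, and the danger to be excluded---mass escaping into the ends of $\calL/\pi_1(S)$, that is, into pairs of geodesics crossing at smaller and smaller angles---is precisely the real content of Bonahon's theorem. One can exhaust $\calL$ by the sets $\calL_\epsilon$ of pairs meeting at angle at least $\epsilon$, whose quotients \emph{are} compact, but continuity then requires the convergence $(\mu\times\nu)\bigl(\calL_\epsilon/\pi_1(S)\bigr)\to \I(\mu,\nu)$ as $\epsilon\to 0$ to be uniform over currents near $(\mu,\nu)$. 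Establishing that uniform control of the small-angle mass---Bonahon does it via estimates on how much measure an invariant, locally finite current can assign to long thin families of nearly parallel geodesics---is the genuinely hard step, and it is exactly what is missing from your proposal.
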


Moreover, in \cite{otal}, Otal proved that $\I$ and $\euC$ can be used to
separate points:
\begin{theorem}[Otal] \label{T:otal}
Given $\mu_1,\mu_2 \in \CC(S)$, $\mu_1 = \mu_2$ if and only if
$\I(\mu_1,\alpha) = \I(\mu_2,\alpha)$ for all $\alpha
\in \euC$.
\end{theorem}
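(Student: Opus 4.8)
The forward direction is immediate from the definition, so the content is the converse: if $\I(\mu_1,\alpha)=\I(\mu_2,\alpha)$ for every $\alpha\in\euC$, then $\mu_1=\mu_2$ as measures on $\G(\widetilde S)$. The plan is to recover each current from its values on a basis of open sets and to express those values through intersection numbers with closed curves. First I would record the two tools supplied above. By bilinearity of the extended intersection number (Theorem~\ref{T:i cont}) the hypothesis gives $\I(\mu_1,t\alpha)=\I(\mu_2,t\alpha)$ for all $t>0$, so $\mu_1$ and $\mu_2$ pair equally against every element of $\R_+\cdot\euC$; since this set is dense in $\CC(S)$ and $\I$ is continuous, we get $\I(\mu_1,\nu)=\I(\mu_2,\nu)$ for all $\nu\in\CC(S)$. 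This reduces the theorem to the nondegeneracy statement that a current is determined by its intersection with all currents, and, as we now explain, even with all closed curves. The geometry enters precisely here.

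Next I would set up the box basis. Identifying $\widetilde S_\infty$ with a circle, the space $\G(\widetilde S)$ has a basis of open sets $B(I,J)$ consisting of (unoriented) geodesics with one endpoint in an arc $I$ and the other in a disjoint arc $J$; since a Radon measure is determined by its values on such a basis, it suffices to show $\mu_1(B)=\mu_2(B)$ for enough boxes $B$. The bridge to intersection numbers is Bonahon's formula: if $\alpha\in\euC$ has a lift whose axis is the geodesic with endpoints $\{a^-,a^+\}$, stabilized by $\gamma\in\pi_1(S)$, then the geodesics crossing this axis are exactly those in the box $B(I_\alpha,J_\alpha)$ cut out by $a^-,a^+$, and $\I(\mu,\alpha)=\mu(D_\gamma)$, where $D_\gamma$ is a fundamental domain for the $\langle\gamma\rangle$--action on that crossing box---concretely, the geodesics crossing a single period of the axis.

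The heart of the argument is to manufacture, from these axis-crossing fundamental domains, the measure of an arbitrary small box. Using that hyperbolic elements of $\pi_1(S)$ have fixed-point pairs dense in $\widetilde S_\infty\times\widetilde S_\infty$, I would choose $\gamma$ whose repelling and attracting fixed points bracket a target arc $K$, so that one period $[p,\gamma p)$ of the axis localizes the ``near'' endpoint to (approximately) $K$ while the ``far'' endpoint still ranges over the whole complementary arc; restricting the far endpoint to a second small arc $L$ is then achieved by an inclusion--exclusion over finitely many such elements whose complementary arcs realize the needed subdivisions. Passing to the limit of these finite combinations, and invoking continuity of $\I$, expresses $\mu(B(K,L))$ as a limit of intersection numbers with closed curves; applying this to $\mu_1$ and $\mu_2$ simultaneously yields $\mu_1(B(K,L))=\mu_2(B(K,L))$, and hence $\mu_1=\mu_2$.

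I expect the main obstacle to be the measure-theoretic control in this limiting step. The approximating sets $D_\gamma$ converge to the target box only up to their boundaries, and a priori neither $\mu_1$ nor $\mu_2$ need be well-behaved on box boundaries, so equality of the intersection numbers does not transfer to the limit for free. I would resolve this by choosing the four endpoints of $K,L$ generically, so that the boundary faces of $B(K,L)$ are $\mu_i$--null for $i=1,2$, and by sandwiching $B(K,L)$ between inner and outer approximations assembled from the domains $D_\gamma$; the common value of the two intersection-number limits then forces $\mu_1(B(K,L))=\mu_2(B(K,L))$. This boundary-null/monotone-sandwich step, together with the bookkeeping needed to realize each box by a definite finite inclusion--exclusion of axis-crossing domains, is the technically delicate part, while the algebraic reductions above are routine by comparison.
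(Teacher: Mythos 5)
The paper does not actually prove Theorem~\ref{T:otal}: it is quoted as a known result from Otal's paper \cite{otal}, so there is no in-paper argument to compare yours against, and your sketch has to stand on its own. Its skeleton is the right one --- reduce to recovering the $\mu_i$-measures of boxes $B(K,L)$, use the identity $\I(\mu,\alpha)=\mu(D_\gamma)$ where $D_\gamma$ is a fundamental domain for the $\langle\gamma\rangle$-action on the set of geodesics crossing the axis, and handle box boundaries by genericity plus a sandwich argument. The problem is the central step, where you claim a box can be produced ``by an inclusion--exclusion over finitely many such elements.'' That step, as described, cannot work, and it is exactly where the content of the theorem lies. First, $D_\gamma$ is not a box, nor approximately one for any choice of $\gamma$ or of period: a geodesic lies in $D_\gamma$ precisely when its unique crossing point with the axis falls in the window $[p,\gamma p)$, so within $D_\gamma$ the two endpoints are correlated (as the near endpoint approaches a fixed point of $\gamma$, the admissible far endpoints are forced toward that same fixed point), and replacing $\gamma$ by a power only rescales the measure, since $\I(\mu,\gamma^n)=n\,\I(\mu,\gamma)$, without localizing anything. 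Second, inclusion--exclusion would require the measures of overlaps such as $\mu(D_{\gamma_1}\cap D_{\gamma_2})$, and these are not intersection numbers with any closed curve; meanwhile the sets whose intersections really do produce boxes, namely the full crossing sets $B(K,K^c)$, are $\langle\gamma\rangle$-invariant and hence have \emph{infinite} $\mu$-measure whenever $\I(\mu,\gamma)>0$. So no finite Boolean combination of the data $\{\I(\mu,\alpha)\}_{\alpha\in\euC}$ evaluates $\mu(B(K,L))$.

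What is missing is a device that extracts these cross terms, and supplying it is the substantive part of any proof. The standard mechanism (which is the heart of Otal-type arguments) considers, for hyperbolic elements $g,h$ in suitable position, the closed curves represented by $g^nh^n$: one period of the axis of $g^nh^n$ fellow-travels $n$ periods of the axis of $g$, then $n$ periods of the axis of $h$, with two connecting jumps of bounded length. Writing $\mu\bigl(C(s\cup s')\bigr)=\mu\bigl(C(s)\bigr)+\mu\bigl(C(s')\bigr)-\mu\bigl(C(s)\cap C(s')\bigr)$ for crossing sets of segments, one finds that the deficit
\begin{equation*}
n\,\I(\mu,g)+n\,\I(\mu,h)-\I(\mu,g^nh^n)
\end{equation*}
converges, for configurations whose boundary boxes are $\mu$-null, to the $\mu$-measure of a union of boxes spanned by the fixed points $g^{\pm},h^{\pm}$; density of fixed-point pairs of hyperbolic elements and your boundary-genericity argument then finish the proof. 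Some limiting construction of this kind --- products of high powers and control of the fellow-traveling errors, rather than a finite inclusion--exclusion of fundamental domains --- is unavoidable, and it is precisely the piece your proposal leaves unproved. (Your opening reduction, extending the hypothesis to $\I(\mu_1,\nu)=\I(\mu_2,\nu)$ for all currents $\nu$ by density and Theorem~\ref{T:i cont}, is correct but does not help: the intersection form is not definite, so pairing equally against all currents is no easier to exploit than pairing equally against all curves.)
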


From this, one can easily deduce a convergence criterion, and also
define a metric on the space of currents which will be convenient for our purposes.
\begin{theorem} \label{T:metric}
A sequence $\mu_k \in \CC(S)$ converges to $\mu \in \CC(S)$ if and
only if 
$$
\lim\limits_{k \to \infty} \I(\mu_k,\alpha) = \I(\mu,\alpha),
$$ 
for all $\alpha \in \euC.$ Furthermore, there exist
$t_\alpha \in \R_+$ for each $\alpha\in\euC$ so that
\[
d(\mu_1,\mu_2) = \sum_{\alpha \in \euC} t_\alpha 
\big| \I(\mu_1,\alpha) - \I(\mu_2,\alpha) \big|
\] defines a proper metric on $\CC(S)$ which is
compatible with the weak* topology.
\end{theorem}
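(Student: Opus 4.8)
The plan is to prove Theorem \ref{T:metric} in two halves, treating the convergence criterion and the metric construction separately, since both follow from the bilinearity of \I (Theorem \ref{T:i cont}), Otal's separation result (Theorem \ref{T:otal}), and the density of positive multiples of curves in $\CC(S)$.

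For the convergence criterion, one direction is immediate: if $\mu_k \to \mu$ in the weak* topology, then for each fixed $\alpha \in \euC$ the continuity of $\I$ in its first argument (Theorem \ref{T:i cont}) gives $\I(\mu_k,\alpha) \to \I(\mu,\alpha)$. The substantive direction is the converse, and here I would argue by a compactness-and-separation scheme. First I would observe that the hypothesis $\I(\mu_k,\alpha) \to \I(\mu,\alpha)$ for all $\alpha$ forces the sequence $\{\mu_k\}$ to be bounded, hence precompact, by a standard property of Radon measures with the weak* topology (the masses on a fixed compact fundamental domain are controlled by finitely many intersection numbers). Then I would take any weak* convergent subsequence $\mu_{k_j} \to \mu'$; by the easy direction just established, $\I(\mu',\alpha) = \lim_j \I(\mu_{k_j},\alpha) = \I(\mu,\alpha)$ for every $\alpha \in \euC$, so Otal's theorem (Theorem \ref{T:otal}) gives $\mu' = \mu$. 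Since every convergent subsequence has the same limit $\mu$ and the sequence lies in a compact set, the full sequence converges to $\mu$.

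For the metric, I would choose an enumeration $\euC = \{\alpha_1, \alpha_2, \dots\}$ and select positive weights $t_{\alpha}$ decaying fast enough — for instance $t_{\alpha_n}$ proportional to $2^{-n}/(1 + \I(\cdot,\alpha_n))$-type bounds on a reference set — to guarantee that the series defining $d(\mu_1,\mu_2)$ converges for all pairs. Symmetry and the triangle inequality are then inherited termwise from the absolute value, and $d(\mu_1,\mu_2)=0$ forces $\I(\mu_1,\alpha)=\I(\mu_2,\alpha)$ for all $\alpha$, whence $\mu_1=\mu_2$ by Otal's theorem, so $d$ is a genuine metric. That $d$ induces the weak* topology then follows from the convergence criterion proved above: $d(\mu_k,\mu) \to 0$ is equivalent, given the summability of the weights, to $\I(\mu_k,\alpha) \to \I(\mu,\alpha)$ for each $\alpha$, which is equivalent to weak* convergence.

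The main obstacle is \emph{properness} of $d$, together with choosing the weights $t_\alpha$ so that summability holds simultaneously with the topological compatibility. Properness requires that $d$-bounded sets be precompact, which in turn rests on the same mass-bound argument sketched above: I must verify that a uniform bound on finitely many (or an appropriately weighted sum of) intersection numbers $\I(\mu,\alpha_n)$ controls the total mass of $\mu$ on a compact piece of $\G(\tilde S)$, using that the geodesics through a fixed compact region meet some curve from a suitable finite collection. Making this mass estimate precise — ensuring the weights decay fast enough for convergence yet the sum still dominates the mass on each compactum — is the delicate point; the rest of the proof is formal once Theorems \ref{T:i cont} and \ref{T:otal} and the density statement are in hand.
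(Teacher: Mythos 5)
Your proposal is correct and follows essentially the same skeleton as the paper's proof: continuity of $\I$ for the easy direction; precompactness, a convergent-subsequence argument, and Otal's theorem (Theorem \ref{T:otal}) for the converse; an enumeration of $\euC$ with geometrically decaying weights for the metric; and Otal again for positivity. The only point worth flagging is that the step you single out as the ``delicate point'' --- verifying that bounds on intersection numbers control mass on compacta --- is precisely the content of Bonahon's Proposition \ref{P:compact currents}, which the paper invokes as a black box both for the precompactness of $\{\mu_k\}\cup\{\mu\}$ (via a single filling curve $\alpha_0$) and for the summability of the series (via precompactness of $\{\alpha/\I(\alpha_0,\alpha)\}_{\alpha\in\euC}$, which is the precise version of your ``reference set'' normalization $t_k=1/(2^k\I(\alpha_0,\alpha_k))$). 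Once that proposition is cited, properness also becomes short: your route works directly, since $d(\mu,0)\le\rho$ forces $\I(\mu,\alpha_0)\le \rho/t_{\alpha_0}$ and hence confinement to a compact set; the paper instead uses the homogeneity $d(t\mu,0)=t\,d(\mu,0)$ together with compactness of the projectivized set $\left\{ \mu/\I(\mu,\nu) \right\}$ to trap closed balls between compact sets. So there is no gap in your argument, only a re-derivation in sketch form of a compactness fact the paper already has on the shelf.
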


Before we prove this theorem, we recall one further fact due to Bonahon
\cite{bon-currents} which we will need.  We say that a geodesic current $\nu$ is \textit{binding}
if for every $(x,y) \in \G(\tilde S)$, there is an $(x',y')$
in the support of $\nu$ such that $(x,y)$ and $(x',y')$ link in $\tilde
S_\infty$.  With respect to any fixed metric,
this is equivalent to requiring that every bi-infinite geodesic in $\tilde S$
intersects some geodesic in the support of $\nu$. It
follows, as discussed by Bonahon, that
any binding current and any nonzero current have positive intersection number.
As an example, any filling curve or union of curves
determines a binding current.

\begin{proposition}[Bonahon] \label{P:compact currents} If $\nu$ is a
binding geodesic current and $R > 0$, then the set
\[ \{ \mu \in \CC(S) \, | \, \I(\mu,\nu) \leq R \} \]
is a compact set.  Consequently, the set
\[ \left\{ \frac{\mu}{\I(\mu,\nu)} \, \Big| \, \mu \in \CC(S) \setminus \{0
\} \right\} \]
is compact, and hence so is $\PCC(S)$.
\end{proposition}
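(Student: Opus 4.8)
The plan is to combine the continuity and bilinearity of $\I$ (Theorem \ref{T:i cont}) with the binding hypothesis on $\nu$ to show that the sublevel set is both closed and precompact in the weak* topology, and then to deduce the remaining statements by normalization. Closedness is immediate: since $\mu \mapsto \I(\mu,\nu)$ is continuous, the set $\{\mu \in \CC(S) : \I(\mu,\nu) \leq R\}$ is the preimage of $(-\infty, R]$ under a continuous map, hence closed. The real content is precompactness.

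For precompactness the key is to extract, from the single scalar bound $\I(\mu,\nu) \leq R$, a uniform bound on the $\mu$-measure of a fundamental domain for the $\pi_1(S)$-action on $\G(\tilde S)$. I would use the description of the intersection number as the total mass, in the quotient by the diagonal $\pi_1(S)$-action, of the set of pairs of geodesics that link in $\tilde S_\infty$; that is, $\I(\mu,\nu)$ is the reduced $(\mu \times \nu)$-measure of linking pairs. The binding property says that each geodesic $g \in \G(\tilde S)$ links some geodesic in the support of $\nu$. Since linking is an open condition on pairs, for each $g$ I can choose open neighborhoods $U_g \ni g$ and $V_g$ meeting the support of $\nu$ such that every geodesic in $U_g$ links every geodesic in $V_g$ and $\nu(V_g) > 0$. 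Covering the compact closure of a fundamental domain by finitely many such $U_{g_1}, \dots, U_{g_m}$ and setting $c = \min_i \nu(V_{g_i}) > 0$, bilinearity gives
\[ \mu(U_{g_i})\,\nu(V_{g_i}) \leq \I(\mu,\nu) \leq R, \]
so $\mu(U_{g_i}) \leq R/c$ for each $i$, and hence $\mu$ assigns mass at most $mR/c$ to the fundamental domain, a bound independent of $\mu$. A uniform mass bound over a compact fundamental domain is precisely what is needed for weak* precompactness of $\pi_1(S)$-invariant Radon measures, by a Banach--Alaoglu / vague-compactness argument. Together with closedness, this shows $\{\mu : \I(\mu,\nu) \leq R\}$ is compact.

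For the consequences, recall that since $\nu$ is binding we have $\I(\mu,\nu) > 0$ for every nonzero $\mu$, so the normalized current $\mu/\I(\mu,\nu)$ is well-defined and lies in the level set $\{\mu : \I(\mu,\nu) = 1\}$. Taking $R = 1$ in the first part, this level set is a closed subset of a compact set, hence compact. Since $\PCC(S)$ is the image of this level set under the continuous projectivization map, and every projective class is represented in it, $\PCC(S)$ is compact as a continuous image of a compact space.

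I expect the main obstacle to be the passage from the global intersection bound to the uniform local mass bound: namely, making the covering argument on the fundamental domain precise, verifying that linking is genuinely an open condition detected by the intersection pairing with a definite positive weight, and handling the diagonal $\pi_1(S)$-quotient carefully enough that the inequality $\mu(U_{g_i})\,\nu(V_{g_i}) \leq \I(\mu,\nu)$ is not undermined by identifications. Once that estimate is in place, closedness, the normalization step, and the compactness of $\PCC(S)$ are all formal.
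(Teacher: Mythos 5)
The paper itself contains no proof of this proposition: it is quoted as a fact due to Bonahon \cite{bon-currents}, and the statement is immediately followed by the proof of Theorem \ref{T:metric}, which uses it. So there is nothing internal to compare against; what you have written is, in outline, Bonahon's original argument, and it is correct. Closedness via continuity of $\mu \mapsto \I(\mu,\nu)$ is fine, and your reduction of the single bound $\I(\mu,\nu)\le R$ to a uniform local mass bound is exactly the real content. The two care points you flag at the end are the right ones, and both are handleable: (i) the inequality $\mu(U_{g_i})\,\nu(V_{g_i}) \le \I(\mu,\nu)$ requires that $U_{g_i}\times V_{g_i}$ inject into the quotient of the space of linking pairs by the diagonal $\pi_1(S)$--action; this can be arranged by shrinking the neighborhoods, because that diagonal action \emph{is} properly discontinuous (a linking pair determines, equivariantly, an intersection point in $\tilde S$), even though the action on $\G(\tilde S)$ itself is not; (ii) for the same reason there is no genuine compact fundamental domain for $\pi_1(S)$ acting on $\G(\tilde S)$, so the object to cover is instead a compact set $K$ whose translates cover $\G(\tilde S)$ --- for instance the (compact) set of geodesics meeting a fixed compact fundamental polygon in $\tilde S$; invariance plus the bound $\mu(K)\le mR/c$ then bounds $\mu$ uniformly on every compact subset, and vague (Banach--Alaoglu) compactness for invariant Radon measures, together with closedness, finishes the first claim. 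One thing to make explicit: your argument genuinely needs Bonahon's measure-theoretic description of $\I$ as the mass of the set of linking pairs in the quotient, not merely the formal continuity and bilinearity recorded in Theorem \ref{T:i cont}; that is a legitimate input here, since the proposition is Bonahon's and the paper works inside his theory, but the proposition could not be derived from the formal properties alone. The normalization step (the set in question equals the level set $\{\mu : \I(\mu,\nu)=1\}$, closed inside the compact sublevel set for $R=1$) and the compactness of $\PCC(S)$ as its continuous surjective image are correct as written, using that binding forces $\I(\mu,\nu)>0$ for every nonzero $\mu$.
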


\begin{proof}[Proof of Theorem \ref{T:metric}]
Continuity of $\I$ implies $\I(\mu_k,\alpha) \to \I(\mu,\alpha)$ for all $\alpha \in \euC$ if
$\mu_k \to \mu$.   To prove the other direction,
assume $\I(\mu_k,\alpha) \to \I(\mu,\alpha)$ for all $\alpha \in \euC$.  In particular, if we let
$\alpha_0 \in \euC$ be a filling curve (so the
associated current is binding), then
$\I(\mu_k,\alpha_0),\I(\mu,\alpha_0) \leq R$ for some $R
> 0$.  So, $\{\mu_k\} \cup \{\mu\}$ is contained in some compact set
by Proposition \ref{P:compact currents}.

Since $\CC(S)$ is metrizable, it follows that there is a convergent
subsequence $\mu_{k_n} \to \mu'$ for some $\mu' \in \CC(S)$. Continuity of
$\I$ implies that $\I(\mu,\alpha) = \I(\mu',\alpha)$ for all $\alpha$,
and so Theorem \ref{T:otal} guarantees that $\mu
= \mu'$. Since this is true for any convergent subsequence of
$\{\mu_k\}$ it follows that $\mu_k \to \mu$.  This completes the proof of the first statement of the theorem.

To build the metric we must first find the numbers $\{t_\alpha\}$.  For
this, we observe that for any $\mu \in \CC(S)$ and fixed choice of a
filling curve $\alpha_0$,
the numbers
\[ \left\{ \frac{\I(\mu,\alpha)}{\I(\alpha_0,\alpha)} \right\}_{\alpha
\in \euC}  =
 \left\{ \I \left( \mu, \ \frac{\alpha}{\I(\alpha_0,\alpha)}\right) \right\}_{\alpha \in \euC}
\] are uniformly bounded.  This follows from the fact that the set of
currents
\[
\left\{ \frac{\alpha}{\I(\alpha_0,\alpha)} \right\}_{\alpha \in \euC}
\]
is precompact by Proposition \ref{P:compact currents}.

Now we enumerate all closed curves $\alpha_0,\alpha_1,\alpha_2,... \in
\euC$ ($\alpha_0$ still denoting our filling
curve).  Set $t_k = t_{\alpha_k} = 1/(2^k \I(\alpha_0,\alpha_k))$.  It
follows that
\[ 
\sum_{k=0}^\infty t_k \I(\mu,\alpha_k) = \sum_{k=0}^\infty \frac{1}{2^k} \, 
\I \left(\mu,\frac{\alpha_k}{\I(\alpha_0,\alpha_k)} \right)
\]
converges and hence the series for $d$ given in the statement of the
proposition converges.  Symmetry and the triangle
inequality are immediate, and positivity follows from Theorem
\ref{T:otal}.  The fact that the topology agrees with the
weak* topology is a consequence of the first part of the Theorem and the
fact that $\CC(S)$ is metrizable (hence first
countable, so determined by its convergent sequences).

Finally, we verify that the metric is proper.
Proposition \ref{P:compact currents} implies that for any binding
current $\nu \in \CC(S)$, the set
\[ A =  \left\{ \frac{\mu}{\I(\mu,\nu)} \, \Big| \, \mu \in \CC(S)
\setminus \{0 \} \right\} \]
is compact.  Since $d$ is continuous, the distance from $0$ to any point
of $A$ is bounded above by some $R > 0$
and below by some $r > 0$.
Furthermore, for any $\mu \in \CC(S)$ and $t \in \mathbb R_+$, we have
\[d(t\mu,0) = t\cdot d(\mu,0).\]
Hence, the compact set
\[ A' = \{ t \mu \, | \, \mu \in A \, , \, t \in [0,1] \} \]
is contained in the ball of radius $R$ and contains the ball of radius
$r$.  From this and the preceding equation, it
follows that for any $\rho
>0$, the closed ball of radius $\rho > 0$ about $0$ is a compact set.
That is, $d$ is a proper metric.
\end{proof}

\subsection{Punctured surfaces}

The situation for punctured surfaces requires more care.  First, we replace all punctures by holes, so that we may
uniformize $S$ by a convex cocompact hyperbolic surface.   That is, we give $S$ a complete hyperbolic metric (of
infinite area) so that $S$ contains a compact, \textit{convex core} which we denote $\core(S)$.  To describe $\core(S)$
concretely, first consider the universal covering $\tilde S \to S$ (with $\tilde S$ isometric to the hyperbolic plane) together
with the isometric action of $\pi_1(S)$ by covering transformations.  We denote the limit set of the action on the
circle at infinity of $\tilde S$ by $\Lambda \subset \tilde S_\infty$.  The convex hull of $\Lambda$ in $\tilde S$ is a
closed, $\pi_1(S)$--invariant set which we denote $\hull(\Lambda)$, and the quotient by $\pi_1(S)$ is precisely $\core(S)$.
The inclusion $\core(S) \subset S$ is a homotopy equivalence and the convex cocompactness means that
$\core(S)$ is compact. Let $G(\hull(\Lambda))$ denote the space of geodesics in $\widetilde S$ with both endpoints
in $\Lambda$.  Thus,
$$G(\hull(\Lambda)) \cong (\Lambda \times \Lambda - \Delta) / (x,y) \sim (y,x).$$

A geodesic current on $S$ is now defined to be a $\pi_1(S)$--invariant Radon measure on $G(\hull(\Lambda))$.  Equivalently, we are considering
$\pi_1(S)$--invariant measures on $G(\tilde S)$ for which the support consists of geodesics that project entirely into $\core(S)$.  We use the same notation as before and denote the space of currents on $S$ by $\CC(S)$, endowed with the weak* topology.  Bonahon also proves that the associated projective space $\PCC(S)$ is compact and that the geometric intersection number on closed curves extends continuously to a symmetric bilinear function
\[ \I: \CC(S) \times \CC(S) \to \R. \]

In this setting, the conclusion of Theorem \ref{T:otal} is not true: the geodesic currents associated to boundary
curves have zero intersection number with every geodesic current.  We remedy this as follows.

First suppose that $\alpha: \R \to S$ is a proper bi-infinite geodesic.  If we let $\tilde \alpha:\R \to
\widetilde S$ denote a lift of $\alpha$, then both endpoints limit to points in $\widetilde S_\infty - \Lambda$. As
such, the set of all geodesics in $G(\hull(\Lambda))$ which transversely intersect $\tilde \alpha(\R)$ is a
compact set which we denote $A_{\tilde \alpha}$.  Given $\mu \in \CC(S)$, we define
\[ \I(\mu,\alpha) = \mu(A_{\tilde \alpha}).\]

\begin{lemma} \label{L:proper intersection}
For any proper bi-infinite geodesic $\alpha:\R \to S$, the function
\[\CC(S) \to \R\]
given by $\mu \mapsto \I(\mu,\alpha)$ is continuous and depends only on the proper homotopy class of $\alpha \in \euC'(S)$.
\end{lemma}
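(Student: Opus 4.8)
The plan is to establish the two asserted properties---continuity in $\mu$ and dependence only on the proper homotopy class---after first noting that the definition does not depend on the choice of lift. For the latter, observe that any two lifts of $\alpha$ differ by some $g \in \pi_1(S)$; since $A_{g\cdot\tilde\alpha} = g\cdot A_{\tilde\alpha}$ and $\mu$ is $\pi_1(S)$--invariant, we get $\mu(A_{g\cdot\tilde\alpha}) = \mu(A_{\tilde\alpha})$, so $\I(\mu,\alpha)$ is well defined.

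The heart of the matter is continuity, and the key observation I would isolate is that $A_{\tilde\alpha}$ is not merely compact but \emph{clopen} in $G(\hull(\Lambda))$. Write $a^+, a^-$ for the endpoints of $\tilde\alpha$ on $\tilde S_\infty$; because $\alpha$ is proper, these lie in $\tilde S_\infty \setminus \Lambda$. The points $a^\pm$ cut the circle $\tilde S_\infty$ into two open arcs, and since the closed set $\Lambda$ is disjoint from neighborhoods of $a^+$ and $a^-$, the limit set decomposes as a disjoint union $\Lambda = \Lambda_1 \sqcup \Lambda_2$ of two sets, each compact and relatively open in $\Lambda$. A geodesic $\{x,y\} \in G(\hull(\Lambda))$ transversely meets $\tilde\alpha(\R)$ precisely when its endpoints are separated by $a^+, a^-$, that is, when one lies in $\Lambda_1$ and the other in $\Lambda_2$. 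Hence $A_{\tilde\alpha}$ is the image of $(\Lambda_1 \times \Lambda_2) \sqcup (\Lambda_2 \times \Lambda_1)$, which is both open (a product of relatively open sets, missing the diagonal $\Delta$) and compact. Its indicator $\mathbf 1_{A_{\tilde\alpha}}$ is therefore a continuous, compactly supported function on $G(\hull(\Lambda))$, and continuity of $\mu \mapsto \mu(A_{\tilde\alpha}) = \int \mathbf 1_{A_{\tilde\alpha}} \, d\mu$ is exactly the defining property of the weak$^*$ topology on $\CC(S)$.

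For proper homotopy invariance, I would emphasize that $A_{\tilde\alpha}$ depends on $\tilde\alpha$ only through the unordered endpoint pair $\{a^+, a^-\}$, via the separation condition above, and not on the geodesic arc itself. It therefore suffices to know that the endpoint pair at infinity is an invariant of the proper homotopy class. This follows from the standard fact that a proper homotopy between proper bi-infinite geodesics lifts to a homotopy in $\tilde S$ whose tracks exit every compact set as the parameter tends to $\pm\infty$, forcing the two lifts to be asymptotic and so to share their endpoints in $\tilde S_\infty \setminus \Lambda$ (equivalently, the proper geodesic representative in the fixed convex cocompact metric is unique). Combined with the lift-independence above, this realizes $\I(\mu,\alpha)$ as a function of the class in $\euC'(S)$ alone.

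The only delicate point is the continuity step, and within it the claim that $A_{\tilde\alpha}$ is \emph{open}; everything rests on $a^\pm \notin \Lambda$, which is exactly where properness of $\alpha$ enters. Once the clopen-ness of $A_{\tilde\alpha}$ is in hand, continuity is automatic and the remaining assertions are formal.
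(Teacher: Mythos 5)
Your proof is correct and follows essentially the same route as the paper's: lift-independence via $\pi_1(S)$--equivariance, proper homotopy invariance via the endpoints staying in $\tilde S_\infty \setminus \Lambda$ so that $A_{\tilde\alpha}$ is unchanged, and continuity by integrating the compactly supported continuous indicator of $A_{\tilde\alpha}$ against weak$^*$--convergent measures. The only difference is that you spell out why that indicator is continuous (the clopen decomposition $\Lambda = \Lambda_1 \sqcup \Lambda_2$ induced by the endpoints $a^\pm \notin \Lambda$), a point the paper asserts without elaboration.
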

\begin{proof}
The $\pi_1(S)$--equivariance of $\mu$ shows that $\I(\mu,\alpha)$ is independent of the chosen lift $\tilde
\alpha:\R \to \widetilde S$.  Moreover, a proper homotopy $\alpha_t$ of $\alpha$ lifts to a homotopy $\tilde
\alpha_t$ for which no endpoint ever meets $\Lambda$.  It follows that $A_{\tilde \alpha_t} = A_{\tilde \alpha}$ for
all $t$ and so $\I(\mu,\alpha)$ depends only on the homotopy class $\alpha \in \euC'(S)$.

All that remains to prove is continuity.  Suppose $\mu_k \to \mu$ in $\CC(S)$.  Then since the characteristic function
$\chi$ of $A_{\tilde \alpha}$ is a compactly supported continuous function, it follows that
\[ \I(\mu_k,\alpha) = \int_{G(\hull(\Lambda))} \chi d \mu_k \to \int_{G(\hull(\Lambda))} \chi  d \mu =
\I(\mu,\alpha)\] as required.
\end{proof}

Appealing to the closed case, this provides us with enough intersection numbers to prove the analog of Theorem \ref{T:otal} in the present setting.

Let $DS$ be the double of $\core(S)$ over its boundary, which naturally inherits a hyperbolic metric from 
$\core(S)$.  We consider $\core(S)$ as isometrically embedded in $DS$.  The cover of $DS$ associated to 
$\pi_1(\core(S)) < \pi_1(DS)$ is canonically isometric to $S$, and we can identify the two surfaces, writing 
$S \to DS$ for this cover.  
Thus we have a canonical
identification of universal covers $\tilde S = \widetilde{DS}$.  The action of $\pi_1(S)$ on $\tilde S_\infty$ is
the restriction to $\pi_1(S) < \pi_1(DS)$ of the action of $\pi_1(DS)$.  Any geodesic current $\mu \in \CC(S)$
can be extended to a current in $\CC(DS)$, which we also denote $\mu$, by pushing the measure around via coset representatives of $\pi_1(S) < \pi_1(DS)$, making it $\pi_1(DS)$--equivariant.

This defines an injection $\CC(S) \to \CC(DS)$, and it is straightforward to check that 
this is an embedding.  It follows from Bonahon's construction of the intersection number 
function that $\I$ on $\CC(S)$ is just the restriction, via this embedding, of $\I$ on 
$\CC(DS)$.  If $\alpha$ is any closed geodesic on $DS$, then there are a finite 
(possibly zero) number of lifts of $\alpha$ to the cover $S \to DS$ that nontrivially meet 
$\core(S)$, and we denote these 
$$\alpha^1,\cdots,\alpha^k:\mathbb R \to S.$$
If the image is entirely contained in $\core(S)$, then there is only one lift, and it covers 
a closed geodesic. Otherwise, $\alpha^1,\cdots,\alpha^k$ is a union of proper geodesics 
in $S$.  An inspection of Bonahon's definition of $\I$ reveals that for any $\mu \in \CC(S)$,
\[\I(\mu,\alpha) = \sum_{i=1}^k \I(\mu,\alpha^i).\]
We can now prove the required analog of Theorem \ref{T:otal}.

\begin{theorem} \label{T:like otal}
Given $\mu_1,\mu_2 \in \CC(S)$, $\mu_1 = \mu_2$ if and only if $\I(\mu_1,\alpha) = \I(\mu_2,\alpha)$ for all $\alpha
\in \euC'(S)$.
\end{theorem}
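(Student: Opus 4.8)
The plan is to bootstrap from Otal's theorem (Theorem \ref{T:otal}), which already separates points of $\CC(DS)$ for the closed double $DS$, using the injection $\CC(S) \to \CC(DS)$ together with the decomposition formula for intersection numbers recorded just above the statement. The forward implication is immediate: if $\mu_1 = \mu_2$ then certainly $\I(\mu_1,\alpha) = \I(\mu_2,\alpha)$ for every $\alpha$, so all the content lies in the reverse direction.

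For the reverse direction, suppose $\I(\mu_1,\alpha) = \I(\mu_2,\alpha)$ for all $\alpha \in \euC'(S)$. I would view $\mu_1$ and $\mu_2$ as currents in $\CC(DS)$ via the embedding $\CC(S) \to \CC(DS)$. By Otal's theorem applied to the closed surface $DS$, it then suffices to show that $\I(\mu_1,\alpha) = \I(\mu_2,\alpha)$ for every closed geodesic $\alpha \in \euC(DS)$. Fix such an $\alpha$ and let $\alpha^1,\dots,\alpha^k \from \R \to S$ be the finitely many lifts of $\alpha$ to the cover $S \to DS$ that meet $\core(S)$. Each $\alpha^i$ is either a closed geodesic contained in $\core(S)$, hence a class in $\euC(S) \subset \euC'(S)$, or a proper bi-infinite geodesic in $S$, hence (by Lemma \ref{L:proper intersection}) representing a class in $\euC'(S)$. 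The decomposition formula $\I(\mu_j,\alpha) = \sum_{i=1}^k \I(\mu_j,\alpha^i)$ therefore expresses the intersection with $\alpha$ entirely in terms of intersections with elements of $\euC'(S)$.

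Applying the hypothesis term by term gives $\I(\mu_1,\alpha^i) = \I(\mu_2,\alpha^i)$ for each $i$, and summing yields $\I(\mu_1,\alpha) = \I(\mu_2,\alpha)$. Since $\alpha \in \euC(DS)$ was arbitrary, Theorem \ref{T:otal} forces $\mu_1 = \mu_2$ as elements of $\CC(DS)$; injectivity of the embedding $\CC(S) \to \CC(DS)$ then gives $\mu_1 = \mu_2$ in $\CC(S)$, as desired.

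The one point requiring care, and the step I regard as the crux, is the bookkeeping in the decomposition formula: one must ensure that every lift $\alpha^i$ meeting $\core(S)$ is genuinely accounted for as a class in $\euC'(S)$ (this is where the identification of the proper bi-infinite geodesics of the convex cocompact metric with the essential proper paths defining $\euC'(S)$ is used), and that lifts disjoint from $\core(S)$ contribute nothing, so that no intersection data on $DS$ is lost or double-counted. Once this correspondence is in hand, the result is a clean reduction to the closed case via Otal's theorem.
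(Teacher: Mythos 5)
Your proof is correct and takes essentially the same approach as the paper: both arguments reduce to Otal's theorem on the closed double $DS$ via the embedding $\CC(S) \to \CC(DS)$ and the decomposition formula $\I(\mu,\alpha) = \sum_{i=1}^k \I(\mu,\alpha^i)$ over lifts $\alpha^i \in \euC'(S)$. The paper merely runs the argument contrapositively (if $\mu_1 \neq \mu_2$, a separating curve $\alpha \in \euC(DS)$ produces, through the same decomposition, a separating element of $\euC'(S)$), which is the identical reduction in the opposite logical direction.
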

\begin{proof}
If $\mu_1 \neq \mu_2$, we must find $\alpha \in \euC'(S)$ so that 
$\I(\mu_1,\alpha) \neq \I(\mu_2,\alpha)$. By Theorem \ref{T:otal}, there exists 
$\alpha \in \euC(DS)$ so that $\I(\mu_1,\alpha) \neq \I(\mu_2,\alpha)$. If
$\alpha$ is contained in $\core(S)$, then $\alpha \in \euC(S) \subset \euC'(S)$ and we 
are done. Otherwise, let $\alpha^1,...,\alpha^k \in \euC'(S)$ be the lifts as described 
above. Then
\[ \sum_{i=1}^k \I(\mu_1,\alpha^i) = \I(\mu_1,\alpha) \neq \I(\mu_2,\alpha) = \sum_{i=1}^k \I(\mu_2,\alpha^i).\]
But then $\I(\mu_1,\alpha^i) \neq \I(\mu_2,\alpha^i)$ for some $i$, completing the proof.
\end{proof}

We also easily obtain a version of Theorem \ref{T:metric}.
\begin{theorem} \label{T:metric2}
A sequence $\{\mu_k\} \in \CC(S)$ converges to $\mu \in \CC(S)$ if and only if 
$$
\lim\limits_{k \to \infty} \I(\mu_k,\alpha) = \I(\mu,\alpha)$$ 
for all $\alpha \in \euC'(S)$.  Furthermore, there exist $t_\alpha \in \R_+$ for
each $\alpha\in\euC'(S)$ so that
\[
d(\mu_1,\mu_2) = \sum_{\alpha \in \euC'(S)} t_\alpha 
\big| \I(\mu_1,\alpha)- \I(\mu_2,\alpha) \big|
\] 
defines a proper metric on $\CC(S)$ which is compatible with the weak* topology.\qed
\end{theorem}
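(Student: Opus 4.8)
The plan is to treat this as the punctured-surface analogue of Theorem~\ref{T:metric} and to reduce as much as possible to that closed-surface statement by way of the embedding $\CC(S) \to \CC(DS)$ constructed above. Two facts already in hand make this reduction work. First, the embedding is a homeomorphism onto its image and carries the intersection pairing on $\CC(S)$ to the restriction of the pairing on $\CC(DS)$. Second, for every closed geodesic $\alpha \in \euC(DS)$ one has the finite sum formula $\I(\mu,\alpha) = \sum_{i=1}^k \I(\mu,\alpha^i)$, where $\alpha^1,\dots,\alpha^k \in \euC'(S)$ are the lifts of $\alpha$ meeting $\core(S)$. The first identity lets me transport convergence between $S$ and $DS$, and the second is exactly the bridge between the closed curves of $DS$, which index the metric of Theorem~\ref{T:metric}, and the elements of $\euC'(S)$, which index the metric I want.

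For the convergence criterion, the forward direction is immediate: if $\mu_k \to \mu$ in $\CC(S)$, then $\I(\mu_k,\alpha)\to\I(\mu,\alpha)$ for closed $\alpha$ by Bonahon's continuity of the intersection pairing, and for a proper path $\alpha$ by Lemma~\ref{L:proper intersection}. For the reverse direction, suppose $\I(\mu_k,\alpha)\to\I(\mu,\alpha)$ for every $\alpha\in\euC'(S)$. Viewing the $\mu_k$ and $\mu$ in $\CC(DS)$, the sum formula shows that $\I(\mu_k,\alpha)\to\I(\mu,\alpha)$ for every closed $\alpha\in\euC(DS)$, since each such intersection number is a fixed finite sum of convergent sequences. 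The first part of Theorem~\ref{T:metric}, applied to the closed surface $DS$, then gives $\mu_k\to\mu$ in $\CC(DS)$; and since the embedding is a homeomorphism onto its image and the limit $\mu$ is assumed to lie in $\CC(S)$, the convergence returns to $\CC(S)$.

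To build the metric I would inherit the weights from $DS$ rather than reconstruct them. Fix weights $\{s_\beta\}_{\beta\in\euC(DS)} \subset \R$ as provided by Theorem~\ref{T:metric}, so that $\sum_\beta s_\beta\,\I(\mu,\beta) < \infty$ for all $\mu$. The assignment $\alpha\mapsto\beta$ sending each element of $\euC'(S)$ to the closed geodesic of $DS$ it covers is surjective onto the core-meeting closed geodesics and finite-to-one, so I may set $t_\alpha = s_{\beta(\alpha)} > 0$ for each $\alpha \in \euC'(S)$. The sum formula then yields
\[
\sum_{\alpha\in\euC'(S)} t_\alpha\,\I(\mu,\alpha) \;=\; \sum_{\beta\in\euC(DS)} s_\beta\,\I(\mu,\beta) \;<\; \infty,
\]
so the series defining $d$ converges by the triangle inequality in $\R$. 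Symmetry and the triangle inequality are formal; positivity is exactly Theorem~\ref{T:like otal}; compatibility with the weak* topology follows from the convergence criterion just proved together with the metrizability, hence first countability, of $\CC(S)$; and properness follows as in the closed case from the compactness of the sets $\{\mu : \I(\mu,\nu)\le R\}$ and of $\PCC(S)$ recorded in Proposition~\ref{P:compact currents}, or is inherited from properness of the metric on $\CC(DS)$.

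The main obstacle, and the reason a verbatim imitation of the proof of Theorem~\ref{T:metric} does not go through, is that elements of $\euC'(S)$ which are genuine proper paths are \emph{not} themselves geodesic currents in $\CC(S)$: their geodesic lifts have endpoints in $\widetilde S_\infty\setminus\Lambda$ and so do not lie in $G(\hull(\Lambda))$. Consequently the closed-case device of bounding $\I(\mu,\alpha)$ via precompactness of the projectivized family $\{\alpha/\I(\alpha_0,\alpha)\}$ has no literal meaning for the proper-path terms, and one cannot select summable weights by that argument. Passing to $DS$ is what resolves this: there the proper paths assemble, via the sum formula, into honest closed-curve currents on a closed surface, where Theorem~\ref{T:metric} supplies both the summable weights and the compactness needed for properness. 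The only points requiring genuine checking are that the grouping of $\euC'(S)$ by covered geodesics of $DS$ is a legitimate, at most finite-to-one, bookkeeping of indices, and that the transported convergence indeed returns to $\CC(S)$ — both of which follow from the properties of the embedding and the sum formula established before the statement.
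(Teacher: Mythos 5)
Your first half (the convergence criterion) is correct, and it genuinely differs from the paper's route: the paper never leaves $\CC(S)$ --- it applies Proposition \ref{P:compact currents} on $DS$ to a filling curve $\alpha_0\in\euC(DS)$ to get compactness of $A = \bigl\{ \mu/\sum_j \I(\mu,\alpha_0^j) \bigr\}$, then reruns the closed-case argument with Theorem \ref{T:like otal} in place of Theorem \ref{T:otal} --- whereas you push the $\mu_k$ into $\CC(DS)$, use the sum formula to upgrade convergence on $\euC'(S)$ to convergence on $\euC(DS)$, invoke Theorem \ref{T:metric} there, and pull the convergence back through the embedding. That reduction is legitimate and clean.

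The gap is in your construction of the weights $t_\alpha$. The assignment ``$\alpha \mapsto$ the closed geodesic of $DS$ it covers'' is ill-defined on exactly the elements for which it is needed: a proper-path class in $\euC'(S)$ has geodesic representatives whose projections to $DS$ are bi-infinite geodesics that in general never close up, so there is no closed geodesic that such an $\alpha$ ``covers.'' The paper's sum formula goes only in the opposite direction: core-meeting lifts of a closed geodesic of $DS$ determine elements of $\euC'(S)$, not vice versa. One can repair the definition --- for instance, send a proper path $\alpha$ to the geodesic representative of its double $\alpha\cup\bar\alpha$ in $DS$, and verify that the class of $\alpha$ is realized by one of the core-meeting lifts of that geodesic --- but even then your displayed identity
\[
\sum_{\alpha\in\euC'(S)} t_\alpha\,\I(\mu,\alpha) \;=\; \sum_{\beta\in\euC(DS)} s_\beta\,\I(\mu,\beta)
\]
is false: the fiber of the assignment over $\beta$ injects into, but does not exhaust or correctly count, the set of core-meeting lifts of $\beta$. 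Already for a closed curve $\alpha\in\euC(S)$ one only has $\I(\mu,\alpha)\le \I(\mu,\beta(\alpha))$, with strict inequality whenever $\beta(\alpha)$ has additional proper core-meeting lifts. What survives is the inequality $\sum_\alpha t_\alpha\I(\mu,\alpha)\le\sum_\beta s_\beta\I(\mu,\beta)$, which would suffice for summability --- but establishing it requires precisely the well-definedness and fiber-injectivity arguments that are missing. Note also that your first properness fallback is circular: Proposition \ref{P:compact currents} is exactly what the paper says is \emph{not} available over a punctured $S$. The paper's fix is the compact set $A$ above: for each $\alpha\in\euC'(S)$ the continuous homogeneous function $\mu\mapsto\I(\mu,\alpha)$ is bounded on $A$ by some $C_\alpha$, giving $\I(\mu,\alpha)\le C_\alpha\sum_j\I(\mu,\alpha_0^j)$, and then the weights $t_{\alpha_i}=1/(2^iC_{\alpha_i})$ make the series converge, with properness following from compactness of $A$ and homogeneity exactly as in the closed case.
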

\begin{proof}
Although we do not have Proposition
\ref{P:compact currents} over $S$, 
this proposition applied to $DS$ implies that if $\alpha_0 \in \euC(DS)$ is a filling curve,
then the associated proper geodesics $\alpha^1,...,\alpha^k \in \euC'(S)$ have the property that
\[ A = \left\{ \frac{\mu}{\sum_j \I(\mu,\alpha^j)} \, \Big| \, \mu \in \CC(S) \setminus 0 \right\} \]
is compact.  The proof continues as for Theorem \ref{T:metric}.
\end{proof}

\section{Spectral rigidity for simple closed curves} \label{S:all S rigid}

This section is devoted to the proof of Theorem~\ref{T:S rigid}.
We begin by considering the case of the torus.  This is not a step in
proving the theorem,
but the proof illustrates
a useful principle used later, and also shows that Theorem \ref{T:rigid iff dense} is false for tori 
(and similarly for once-punctured tori and four-times-punctured spheres).
\begin{proposition} \label{P:three curves torus}
The lengths of any three distinct primitive closed curves determine a flat metric on the torus.
\label{P:flattorus}
\end{proposition}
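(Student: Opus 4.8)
The plan is to reduce the proposition to an elementary linear-algebra fact about binary quadratic forms. Since the torus $T$ has genus one and no punctures, $\xi(T)=0$ and every unit-area quadratic differential is of the form $c\,dz^2$ with no zeros; hence a flat metric on $T$ is a genuine (nonsingular) Euclidean metric, which I realize as $\C/\Lambda$ for a lattice $\Lambda=\Z v_1\oplus\Z v_2$ of covolume one. Fixing the marking, a primitive class in $H_1(T;\Z)\cong\Z^2$ is a coprime pair $(p,q)$; its $q$--geodesic representative is the straight closed geodesic in the direction $pv_1+qv_2$, so that
\[
\ell_q\big((p,q)\big)^2 = |pv_1+qv_2|^2 = a\,p^2 + 2b\,pq + c\,q^2,
\]
where $a=|v_1|^2$, $b=v_1\cdot v_2$, $c=|v_2|^2$. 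The triple $(a,b,c)$ is exactly the Gram matrix of $\Lambda$, and (using the orientation to rule out the reflection ambiguity) two marked lattices with the same Gram matrix differ by a rotation. Thus $(a,b,c)$ determines the point of $\Flat(T)$, and the task becomes: recover $(a,b,c)$ from three lengths.

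First I would record that three distinct primitive closed curves correspond to three distinct slopes in $\Q\cup\{\infty\}$, and that knowing the lengths $\ell_i$ is the same as knowing the values $\ell_i^2=Q(p_i,q_i)$ of the form $Q(x,y)=ax^2+2bxy+cy^2$. This yields the linear system
\[
\begin{pmatrix} p_1^2 & 2p_1q_1 & q_1^2 \\ p_2^2 & 2p_2q_2 & q_2^2 \\ p_3^2 & 2p_3q_3 & q_3^2 \end{pmatrix}
\begin{pmatrix} a \\ b \\ c \end{pmatrix}
= \begin{pmatrix} \ell_1^2 \\ \ell_2^2 \\ \ell_3^2 \end{pmatrix}
\]
for the three unknown coefficients.

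The crux, and the only real step, is showing the coefficient matrix is invertible; this is the main point of the argument, though it is not difficult. When all $q_i\neq 0$, I write $t_i=p_i/q_i$, so the $i$th row is $q_i^2\,(t_i^2,\,2t_i,\,1)$; pulling out the nonzero factors $q_i^2$ reduces the determinant to a Vandermonde determinant in the distinct slopes $t_i$, hence nonzero. The remaining case $q_i=0$ (slope $\infty$) can occur for at most one of the three curves and is handled by a direct $2\times 2$ expansion, which again reduces to the nonvanishing of $p_iq_j-p_jq_i$ for distinct slopes. Consequently the system has a unique solution, so the three lengths determine $(a,b,c)$ and therefore the flat metric, proving the proposition. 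I would close by noting that the unit-area constraint $ac-b^2=1$ is never used: three curves already pin down all three coefficients exactly, and the area normalization is automatically consistent. This same ``recover a quadratic form from its values on enough directions'' principle is the useful idea that reappears later.
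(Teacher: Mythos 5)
Your proof is correct, but it takes a genuinely different route from the paper's. The paper identifies the space of unit-area flat tori with the hyperbolic plane $\H$ and observes that prescribing the length of a curve picks out a horocycle (based at the rational boundary point given by the curve's slope); since horocycles with distinct basepoints pairwise meet in at most two points, three distinct curves cut out at most one point of $\H$. You instead parametrize marked flat tori by Gram matrices $(a,b,c)$ of unimodular lattices, note that squared lengths are values of the binary quadratic form $ap^2+2bpq+cq^2$, and invert the resulting $3\times 3$ linear system via a Vandermonde determinant in the three distinct slopes. Both arguments silently use the same convention, namely that distinct primitive (unoriented) curves have distinct slopes in $\Q\cup\{\infty\}$. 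Your approach is more elementary and explicit, and it yields a small bonus the paper's does not: the unit-area normalization is never used, so three lengths determine the flat metric even without fixing the area. What it does not buy is the structural point the paper is setting up: the horocycle principle is exactly what gets reused in the proof of Theorem \ref{T:S rigid}, where, after reducing to a single Teichm\"uller disk $\H_q=\H_{q'}$, the lengths of three cylinder curves with distinct directions again pin down the point as an intersection of three horocycles. So the paper's proof is deliberately phrased to serve as a template for the general rigidity argument, while yours stands alone as a self-contained linear-algebra fact.
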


\begin{proof}
The \Teich space of unit-area flat tori is the hyperbolic plane.
Within this parameter space, prescribing the length of a given curve
picks out
a horocycle in $\H$. The intersection of two horocycles is at most two
points, so by
choosing three arbitrary curves, we can determine the flat metric
on a torus by their lengths.
\end{proof}

The proof of spectral rigidity for simple closed curves follows from a
series of lemmas.
The first states that $\lambda_\euS(q)$
determines $\cyl(q)$.

\begin{lemma} \label{L:spec gives cyl}
For $\alpha \in \euS$ and $q \in \Flat(S)$, $\alpha \not\in \cyl(q)$ if
and only if there exists $\beta \in \euS$ with
$\I(\alpha,\beta) \neq 0$ so that the following condition holds:
\begin{equation}\label{E:not cyl iff equal}
\ell_q(T_\alpha(\beta))-\ell_q(\beta)=\ell_q(\alpha) \cdot
\I(\alpha,\beta).\end{equation}
\end{lemma}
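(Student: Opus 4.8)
The plan is to recognize that equation~\eqref{E:not cyl iff equal} asserts exactly that a natural ``spliced'' representative of $T_\alpha(\beta)$ is length-minimizing, and then to detect the cylinder condition by whether such a minimizer can be shortened. First I would record a universal upper bound. Realize $\alpha,\beta$ by their $q$--geodesics and build a representative $\gamma$ of $T_\alpha(\beta)$ by following $\beta_q$ and inserting, at each essential intersection with $\alpha_q$, a loop parallel to $\alpha_q$. Since $\I(\alpha,\beta)$ is the number of $\pi_1(S)$--orbits of essential intersections of the two lifts, this $\gamma$ lies in the class $T_\alpha(\beta)$ and has length $\ell_q(\beta)+\I(\alpha,\beta)\,\ell_q(\alpha)$; as the geodesic representative minimizes length,
\[
\ell_q(T_\alpha(\beta))\ \le\ \ell_q(\beta)+\I(\alpha,\beta)\,\ell_q(\alpha)
\]
always holds, and \eqref{E:not cyl iff equal} is precisely the statement that this is an equality, i.e. that the spliced representative is itself a $q$--geodesic.

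For the direction $\alpha\in\cyl(q)\Rightarrow$ no such $\beta$ exists (the contrapositive of ``$\Leftarrow$''), I would use that a cylinder curve bounds a flat cylinder $C$ of circumference $c=\ell_q(\alpha)$ and positive height $h>0$. Any $\beta$ with $\I(\alpha,\beta)\ne0$ must cross $C$, and inside the convex flat cylinder the geodesic of $T_\alpha(\beta)$ runs straight with one extra full wind. Comparing representatives, a crossing of horizontal displacement $d$ contributes $\sqrt{h^2+(d+c)^2}$ rather than the spliced value $\sqrt{h^2+d^2}+c$, and $h>0$ gives the strict inequality $\sqrt{h^2+(d+c)^2}<\sqrt{h^2+d^2}+c$ (reducing to $d<\sqrt{h^2+d^2}$). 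Thus the minimizer is strictly shorter, so $\ell_q(T_\alpha(\beta))<\ell_q(\beta)+\I(\alpha,\beta)\,\ell_q(\alpha)$ and \eqref{E:not cyl iff equal} fails for every admissible $\beta$.

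For the converse $\alpha\notin\cyl(q)\Rightarrow\exists\beta$, I would first note that $\alpha_q$ must pass through a cone singularity $v$, since a smooth closed flat geodesic lies in a cylinder. At $v$ the two sides of $\alpha_q$ subtend angles $\ge\pi$ (the geodesic condition) summing to a cone angle $\ge3\pi$, so there is angular room to spare. I would then construct a simple closed curve $\beta$ with $\I(\alpha,\beta)\ne0$ whose geodesic crosses $\alpha_q$ at $v$, placing the incoming and outgoing $\beta$--directions so that both corners of the spliced curve (where $\beta_q$ turns onto the inserted copy of $\alpha_q$ and back) subtend at least $\pi$ on each side. By the angle criterion of Remark~\ref{necsuffgeo} the spliced curve is then a $q$--geodesic, so equality holds and $\beta$ is as required.

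The main obstacle is exactly this last step: producing an honest simple closed curve whose flat geodesic runs through the chosen singularity and crosses $\alpha$ there with prescribed angles, and carrying out the angle bookkeeping at $v$. It is here that the failure at a regular intersection point (where the four sectors sum only to $2\pi$, so a spliced corner can never reach $\pi$ on both sides, forcing a strict shortening) is what compels one to route $\beta$ through a cone point. As an alternative lens unifying both directions, Lemma~\ref{L:length integral} rewrites \eqref{E:not cyl iff equal} as the a.e.-$\theta$ additivity $\I(\foltheta,T_\alpha(\beta))=\I(\foltheta,\beta)+\I(\alpha,\beta)\,\I(\foltheta,\alpha)$, recasting the dichotomy as boundedness versus unboundedness of the twisting of the foliations $\foltheta$ about $\alpha$; the cylinder direction is the source of unbounded twisting.
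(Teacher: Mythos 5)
Your treatment of the cylinder direction and of the a priori upper bound is correct. Exhibiting, in each crossing of the cylinder, the straight arc of displacement $d+c$ and invoking $\sqrt{h^2+(d+c)^2}<\sqrt{h^2+d^2}+c$ for $h>0$ is a legitimate and slightly more explicit alternative to the paper's argument, which instead observes that the spliced representative has corners of angle less than $\pi$ and therefore cannot be geodesic.

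The genuine gap is in the direction $\alpha\notin\cyl(q)\Rightarrow\exists\beta$, and it is worse than the unfinished construction step you flag: the single-crossing-point strategy fails structurally. In your splice, both corners (where the curve turns from $\beta_q$ onto the inserted loop of $\alpha_q$ and back) sit at the \emph{same} point $v$. For both corners to subtend angle at least $\pi$ on both sides, the angle of $\alpha_q$ at $v$ must exceed $\pi$ strictly on \emph{both} sides of $\alpha_q$ at that one point: if one side has angle exactly $\pi$, then any transverse crossing direction in that side makes angles $x$ and $\pi-x$ with the two prongs of $\alpha_q$, both less than $\pi$, so whichever way the loop is inserted one corner is illegal. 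But the non-cylinder hypothesis does not provide such a point. It only guarantees (this is exactly how the paper phrases it) a singularity $x^+$ with excess angle on one side and a possibly \emph{different} singularity $x^-$ with excess angle on the other side; for instance $\alpha_q$ may pass through exactly two cone points of angle $3\pi$, with side angles $(\pi,2\pi)$ at one and $(2\pi,\pi)$ at the other. Such an $\alpha$ is not a cylinder curve, yet no transverse crossing of $\alpha_q$ at any point, singular or not, produces a geodesic splice, so your mechanism cannot produce a witness $\beta$ there even in principle. The lemma is still true in this situation, but equality must be achieved by a $\beta$ whose geodesic \emph{shares a segment} of $\alpha_q$ running from $x^+$ to $x^-$, so that the two corners of the splice occur at \emph{different} singularities, each having excess angle on the side it needs.

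This is precisely what the paper's proof engineers, and it simultaneously dissolves the construction problem you identify. Starting from any $\beta_0$ with $\I(\alpha,\beta_0)\neq 0$, one sets $\beta=T_\alpha^N(\beta_0)$ for $N$ large. In the universal cover, the one-sided excess angles at $x^\pm$ permit geodesics $\gamma^\pm$ meeting $\widetilde\alpha_q$ only at $x^\pm$; these cut off intervals $A^\pm$ at infinity, and for $N$ large every lift of $\beta_q$ linking $\widetilde\alpha_q$ has its endpoints in $A^+$ and $A^-$, hence is forced to run along $\widetilde\alpha_q$ through the segment $[x^+,x^-]$. Inserting one more period of $\alpha$ along this shared segment keeps the path locally geodesic (the angle criterion of Remark~\ref{necsuffgeo} is inherited from $\beta_q$ itself), so equality in \eqref{E:not cyl iff equal} holds. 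Note also that this happens simultaneously at all $\I(\alpha,\beta)$ essential intersections, which is necessary: a single bad splice corner already forces strict inequality, a point your single-crossing sketch leaves untouched. Your closing reformulation via Lemma~\ref{L:length integral} is a pleasant remark but does not substitute for this construction.
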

\begin{proof}
First, suppose $\alpha \in \cyl(q)$.  Fix any $\beta$ with
$\I(\alpha,\beta) \neq 0$. We must show that
$\alpha,\beta,q$ do not satisfy (\ref{E:not cyl iff equal}).

Let $\alpha_q$ denote a $q$--geodesic representative contained in the
interior of its Euclidean cylinder neighborhood
$C$ and let $\beta_q$ denote a $q$--geodesic representative of $\beta$.
Either $\beta_q$ is obtained by traversing a finite
number of saddle connections or else is itself a cylinder curve
(defining a different cylinder than $\alpha$) and
contains no singularities.  It follows that $\beta_q \cap C$ consists of
finitely many straight arcs connecting one
boundary component of $C$ to the other and the number of transverse
intersections of $\alpha_q$ and $\beta_q$ is
 $\I(\alpha,\beta)$.

We can construct a representative of $T_\alpha(\beta)$ as follows.  An
arc $\delta$ of the intersection $\delta \subset
\beta_q \cap C$ is cut by $\alpha_q$ into two arcs $\delta = \delta_0
\cup \delta_1$. To obtain $T_\alpha(\beta)$,
surger in a copy of $\alpha_q$ traversed positively; see
Figure \ref{F:surgery}. Observe that this is
necessarily not a geodesic representative since it makes an angle less
than $\pi$ at each of the surgery points.

\begin{figure}[ht]
\setlength{\unitlength}{0.01\linewidth}
\begin{picture}(80,29)
\put(0,0){\includegraphics[width=80\unitlength]{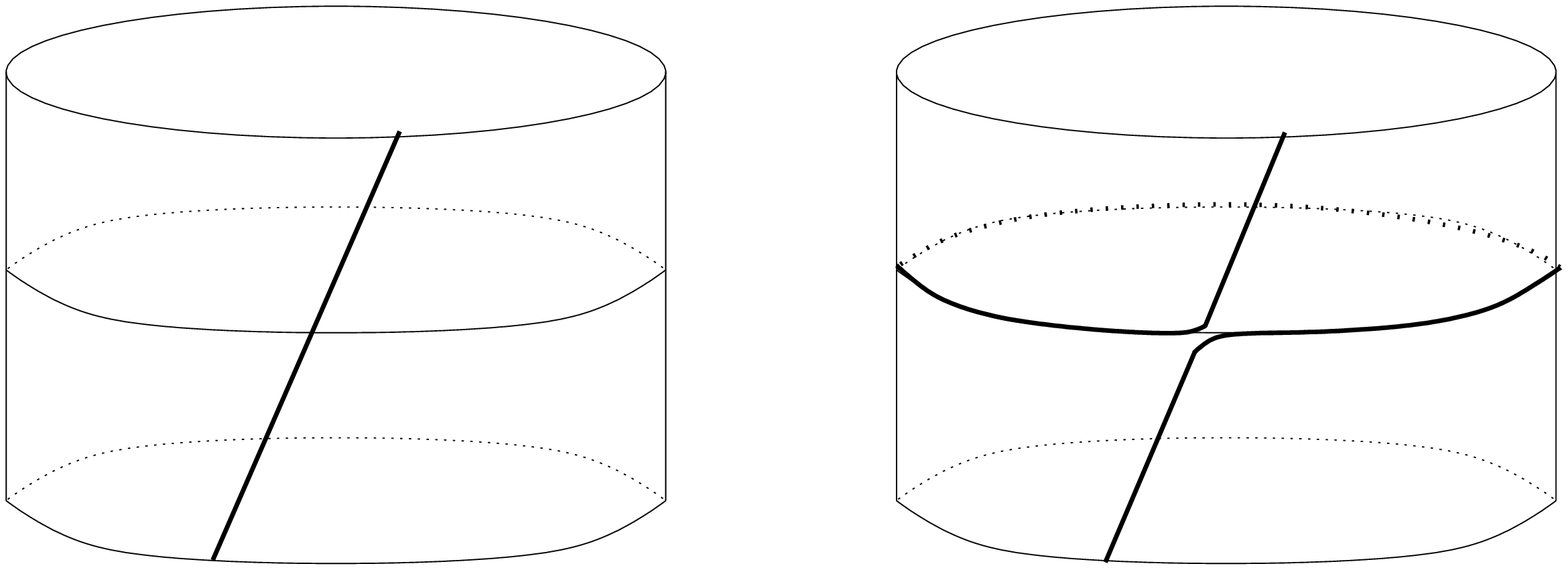}}
   \put(14,15){$\delta$}
    \put(55,15){$T_\alpha(\delta)$}
    \put(30.5,7){$\longleftarrow \quad C \quad \longrightarrow$}
\end{picture}
 \caption{A representative of the image of an arc $\delta$ under
$T_\alpha$} \label{F:surgery}
\end{figure}

Because $\alpha_q$ and $\beta_q$ are transverse, the number
$\I(\alpha,\beta)$ counts the number of intersection points
of $\alpha_q$ and $\beta_q$ which in turn counts the number of arcs
$\delta$ of intersection that $\beta_q$ makes with $C$. The
length of the representative $T_\alpha(\beta)$ we have constructed is
thus precisely
$$\ell_q(\beta)  + \ell_q(\alpha) \cdot \I(\alpha,\beta).$$
As we noted above, our representative is necessarily not geodesic, and hence
$$\ell_q(T_\alpha(\beta)) < \ell_q(\beta) + \ell_q(\alpha) \cdot
\I(\alpha,\beta).$$
Therefore (\ref{E:not cyl iff equal}) is not satisfied, proving the
first half of the lemma, since $\beta$ was arbitrary.\medskip 

We now assume $\alpha \not\in \cyl(q)$, and find $\beta$ with $\I(\alpha,\beta) \neq 0$ so that (\ref{E:not cyl iff
equal}) is satisfied. Assume for simplicity that $S$ is closed (the punctured case is similar).
Consider the universal cover $\widetilde S$ of $S$ equipped with the lifted metric of $q$, and fix
a lift $\widetilde \alpha_q$ of $\alpha_q$.
The bi-infinite geodesic $\widetilde \alpha$ separates
$\widetilde S$ into two components, $H^+ \cup H^-$. Let $h$ be an element of $\pi_1(S)$ that generates the stabilizer of
$\widetilde \alpha_q$, so that its action is by translation along $\widetilde \alpha_q$.

Because $\alpha$ is not a cylinder curve, $\widetilde \alpha_q$ is a
concatenation of saddle connections
meeting at singularities of $\widetilde q$.  
Consider the angles made on each of the two sides at the singularities.
If the angles were always $\pi$ on one side, then there is a parallel curve on $S$ that is
nonsingular, which means $\alpha$ itself is in $\cyl(q)$, contrary to assumption.
Thus, there is a
singularity $x^+$ so that the angle at $x^+$ on
the $H^+$ side made by the saddle connections meeting there is strictly
greater than
$\pi$, and likewise there is $x^-$ chosen relative to $H^-$.  

\begin{figure}[ht]
\setlength{\unitlength}{0.01\linewidth}
\begin{picture}(45,45)
\put(0,0){\includegraphics[width=45\unitlength]{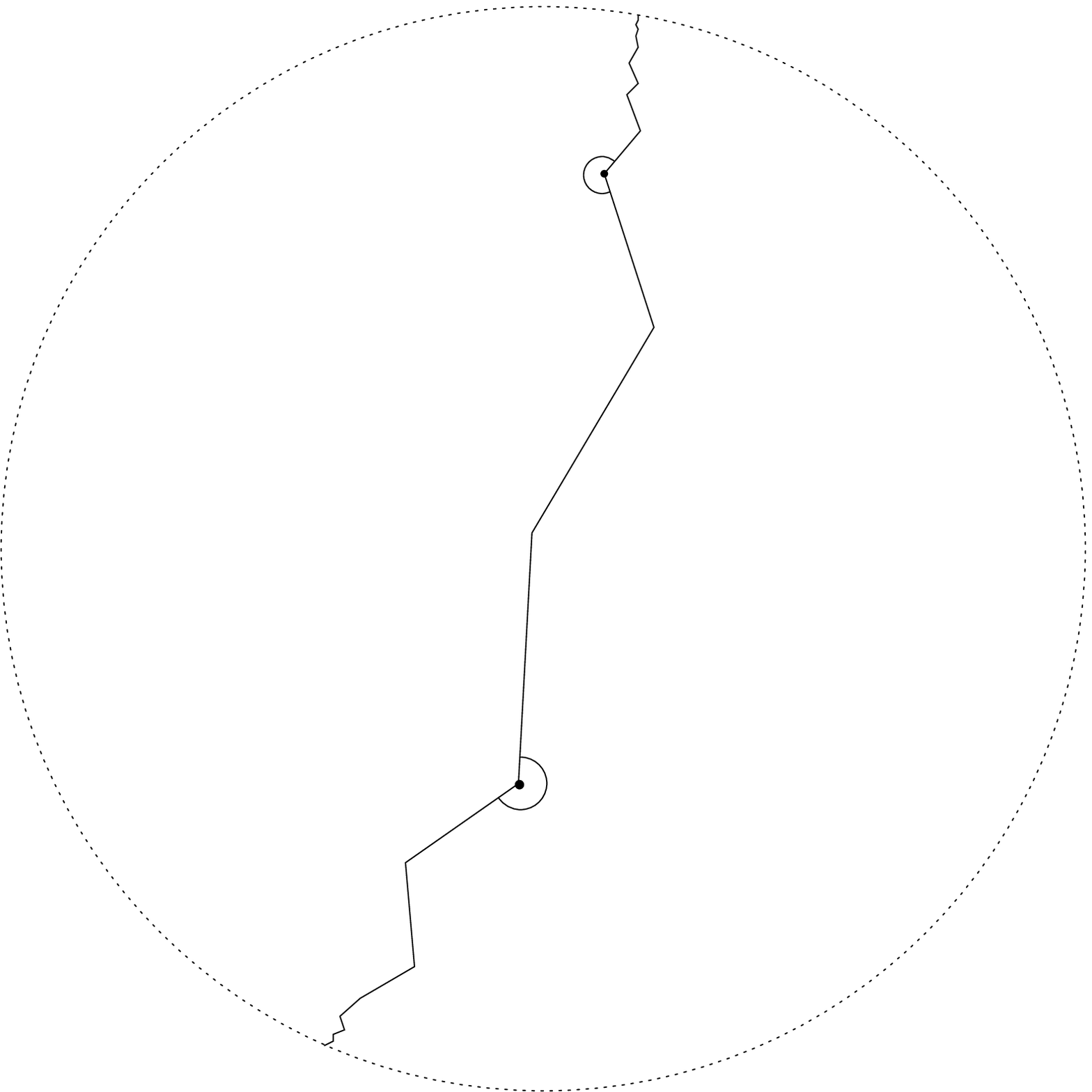}}
   \put(0,40){$\widetilde S$}
   \put(6,23){$H^-$}
   \put(35,19){$H^+$}
   \put(19,25){$\widetilde \alpha$}
   \put(28, 37){$x^-$}
   \put(18,37){$\pi < $}
   \put(17,13){$x^+$}
   \put(24,12){$> \pi$}
\end{picture}
 \caption{The lift $\widetilde \alpha$ and the singularities $x^\pm$.}
\label{F:goodsings}
\end{figure}

We choose geodesics $\gamma^\pm$ contained in $H^\pm$ meeting
$\widetilde \alpha_q$ precisely in the points $x^\pm$. Let
$A^+$ (respectively $A^-$) be the region on the circle at infinity bounded
by an endpoint of $\gamma^+$ (respectively
$\gamma^-$) and an endpoint of $\widetilde \alpha_q$, as shown in Figure
\ref{F:usesings}.

\begin{figure}[ht]
\setlength{\unitlength}{0.01\linewidth}
\begin{picture}(45,53)
\put(0,3){\includegraphics[width=45\unitlength]{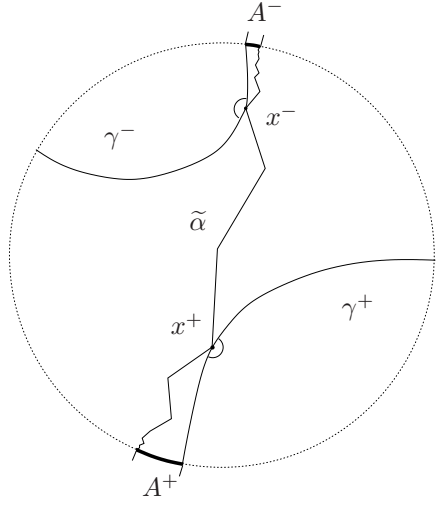}}
   \put(25,51){$A^-$}
   \put(14,1){$A^+$}
   \put(19,29){$\widetilde \alpha$}
   \put(17,18){$x^+$}
   \put(27,40){$x^-$}
\put(35, 20){$\gamma^+$}
\put(10,38){$\gamma^-$}
\end{picture}
\caption{The intervals along the boundary.}
 \label{F:usesings}
\end{figure}

Let $\beta_0 \in \euS$ be any curve with $\I(\alpha,\beta_0) = k \neq
0$.  For
each $1\le j \le k$, we pick a lift $\widetilde
\beta^j$
whose endpoints link those of $\widetilde \alpha$. By replacing
$\beta_0$
with its Dehn-twisted image
$\beta=T_\alpha^N(\beta_0)$ for large enough $N$, we can choose
$\widetilde
\beta^j$ so that
it has one endpoint in $A^+$ and the other in $A^-$, since the effect of
$T_\alpha$
is to shear along $\widetilde \alpha$.
Observe that
each such $\widetilde \beta^j$ includes the $\tilde q$--geodesic segment
$[x^+, x^-]$.
Therefore, for each of the $k$ essential intersections of $\beta$ with $\alpha$, 
the curve $\beta$ traverses some definite length of $\alpha$.
It follows that the geodesic representative of $T_\alpha(\beta)$ is
now exactly obtained from $\beta$ by surgering
in $k$ copies of $\alpha$.
From this, we get  \eqref{E:not cyl iff equal}, as required.
\end{proof}

\begin{corollary}
If $q,q' \in \Flat(S)$ and $\lambda_\euS(q) = \lambda_\euS(q')$, then
$\cyl(q) = \cyl(q')$.  \qed
\end{corollary}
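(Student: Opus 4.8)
The plan is to read this off directly from Lemma~\ref{L:spec gives cyl}, whose characterization of cylinder curves is phrased entirely in terms of data that $\lambda_\euS$ records. First I would unwind the hypothesis: $\lambda_\euS(q) = \lambda_\euS(q')$ says exactly that $\ell_q(\gamma) = \ell_{q'}(\gamma)$ for every $\gamma \in \euS$.

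The crucial observation is that every length appearing in equation~\eqref{E:not cyl iff equal} is the length of a simple closed curve. This is clear for $\ell_q(\alpha)$ and $\ell_q(\beta)$, and for $\ell_q(T_\alpha(\beta))$ it follows because the Dehn twist $T_\alpha$ is a homeomorphism of $S$ and hence carries $\euS$ to itself; in particular $T_\alpha(\beta) \in \euS$. The only remaining ingredient in \eqref{E:not cyl iff equal} is the intersection number $\I(\alpha,\beta)$, which is a topological invariant independent of the metric. Consequently, for any $\alpha,\beta \in \euS$ with $\I(\alpha,\beta) \neq 0$, equation~\eqref{E:not cyl iff equal} holds for $q$ precisely when it holds for $q'$.

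With this in hand, I would apply Lemma~\ref{L:spec gives cyl} to $q$ and to $q'$ in turn: the existence of a suitable $\beta$ witnessing $\alpha \notin \cyl(q)$ is equivalent to the existence of one witnessing $\alpha \notin \cyl(q')$, so $\alpha \in \cyl(q)$ if and only if $\alpha \in \cyl(q')$. As $\alpha$ ranges over all of $\euS$, this yields $\cyl(q) = \cyl(q')$. Since the entire content sits in Lemma~\ref{L:spec gives cyl}, there is no genuine obstacle here; the only subtlety worth flagging is the stability of $\euS$ under $T_\alpha$, which is precisely what guarantees that $\ell_q(T_\alpha(\beta))$ is one of the simple-curve lengths recorded by $\lambda_\euS$.
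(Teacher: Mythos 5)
Your proof is correct and is precisely the argument the paper leaves implicit (the corollary is stated with only a \qed, as an immediate consequence of Lemma~\ref{L:spec gives cyl}): since $T_\alpha(\beta)\in\euS$ and $\I(\alpha,\beta)$ is metric-independent, the criterion in \eqref{E:not cyl iff equal} is read off entirely from $\lambda_\euS$, so the two metrics have the same cylinder curves. The one subtlety you flag --- that Dehn twists preserve simplicity, so $\ell_q(T_\alpha(\beta))$ is among the recorded lengths --- is exactly the point that makes the deduction immediate.
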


The next lemma, combined with Lemma \ref{L:spec gives cyl}, reduces to a disk the
subspace of $\Flat(S)$ having prescribed
lengths.

\begin{lemma} \label{L:cyl gives H}
If $\cyl(q) = \cyl(q')$, then $\H_q = \H_{q'}$.
\end{lemma}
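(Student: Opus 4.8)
The plan is to show that the unordered set $\cyl(q)$ remembers exactly the Teichm\"uller disk, in two stages: first that $\cyl(q)$ determines the circle of straight foliations $\PMF(q) \subset \PMF$, and then that $\PMF(q)$ in turn determines $\H_q$. As a consistency check and to frame the reduction, I would first record that the linear $\SL_2(\R)$-action carries straight regular closed geodesics to straight regular closed geodesics and flat cylinders to flat cylinders, so $\cyl(A\cdot q) = \cyl(q)$ for every $A \in \SL_2(\R)$. Thus $\cyl$ is constant along each Teichm\"uller disk, and the real content of the lemma is the converse injectivity statement.

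For the second stage I would argue that the circle $\PMF(q)$ pins down $\H_q$ using the Hubbard--Masur/Gardiner--Masur correspondence. Any two distinct points $\mu \neq \mu'$ of $\PMF(q)$ are classes $[\foltheta]$ and $[\nu_q^{\theta'}]$ for directions $\theta \neq \theta'$ in $\RP^1$; since the leaves of these foliations run in distinct straight directions, no essential curve can be disjoint from both, so $\mu$ and $\mu'$ jointly fill $S$. Hence there is a unit-area quadratic differential realizing representatives of $\mu$ and $\mu'$ as its horizontal and vertical foliations, and that differential lies in $\H_q$: applying to $q$ the linear map sending the $\theta$- and $\theta'$-directions to the coordinate axes produces a point of the $\SL_2(\R)$-orbit whose horizontal and vertical projective classes are exactly $\mu$ and $\mu'$ (an affine map preserves projective classes of straight foliations). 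So knowing the circle $\PMF(q)$ produces explicit points of $\H_q$, hence the whole disk.

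The first stage is the heart of the matter, and I expect it to be the main obstacle. Viewing each cylinder curve as a point of $\PMF$, the natural plan is to recover $\PMF(q)$ as the set of accumulation points of $\cyl(q)$ in $\PMF$, a subset manifestly determined by $\cyl(q)$ alone; two things must then be proved. First, every accumulation point of $\cyl(q)$ should lie on the circle $\PMF(q)$: because a flat metric has only finitely many regular closed geodesics of bounded length, any accumulating sequence of distinct cylinder curves has $q$--length tending to infinity, and one wants to show that such long cylinder curves, straight in directions $\theta_n \to \theta$, limit projectively to a straight foliation in direction $\theta$. Second, these accumulation points must be dense in $\PMF(q)$, using that periodic directions are dense in $\RP^1$ together with the abundance of cylinders in nearby directions. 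The delicate point is precisely this identification $\overline{\cyl(q)} \supseteq \PMF(q)$ with equality on the accumulation set: grouping cylinder curves by direction and controlling their limiting transverse measures is exactly the metric information that the bare set $\cyl(q)$ conceals, so an equidistribution argument for long cylinder curves on a flat surface is required. Granting this, $\PMF(q)$ is reconstructed from $\cyl(q)$, and combined with the second stage we conclude $\H_q = \H_{q'}$ whenever $\cyl(q) = \cyl(q')$.
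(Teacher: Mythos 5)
Your proposal shares the paper's skeleton---Masur's density of cylinder directions, projective limits of cylinder curves, and a Gardiner--Masur uniqueness argument to pass from shared straight foliations to a shared Teichm\"uller disk (your stage two is essentially the paper's closing step)---but your stage one contains a genuine gap, and in the form you state it the key claim is actually false, not merely unproved. You propose to recover $\PMF(q)$ as the accumulation set of $\cyl(q)$ in $\PMF$, which requires in particular that every accumulation point of $\cyl(q)$ lie on the circle $\PMF(q)$. This fails at non-uniquely-ergodic directions. Concretely, suppose $\theta$ is a periodic direction for $q$, decomposing the surface into cylinders with core curves $\beta_i$, heights $h_i$ and circumferences $c_i$, and suppose the moduli $h_i/c_i$ are commensurable (e.g.\ $q$ square-tiled), so that there is an affine parabolic automorphism $T = T_{\beta_1}^{m_1}\cdots T_{\beta_k}^{m_k}$ in direction $\theta$, with $m_i$ proportional to $h_i/c_i$. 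If $\gamma$ is any cylinder curve, then every $T^n(\gamma)$ is again a cylinder curve, and $T^n(\gamma)$ converges in $\PMF$ to the class of $\sum_i m_i \I(\gamma,\beta_i)\,\beta_i$; meanwhile the straight foliation in direction $\theta$ is $\foltheta = \sum_i h_i\beta_i$. These are generically not projectively equal: on the three-square L-shaped surface, with $\gamma$ a vertical core curve, the limit is $\beta_1 + 2\beta_2$ while $\foltheta = \beta_1 + \beta_2$, so $\cyl(q)$ accumulates at a point off $\PMF(q)$. The underlying reason is exactly the one you flag but defer: zero intersection with $\foltheta$ pins down only the topological foliation in direction $\theta$, not the transverse measure, so when a direction carries more than one measure, limits of cylinder curves are free to land at other measures. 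No equidistribution theorem can rescue the claim as stated, because the accumulation set genuinely exceeds $\PMF(q)$.

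The paper's proof shows how to restructure the argument so that this issue never arises: restrict attention to \emph{uniquely ergodic} directions. If $\foltheta$ is uniquely ergodic and $\alpha_i \in \cyl(q)$ have $q$-directions $\theta_i \to \theta$ (such $\alpha_i$ exist by Masur's density theorem), then $\I(\nu_q^{\theta_i},\alpha_i)=0$ passes to the limit to give $\I(\foltheta,\mu)=0$ for any subsequential limit $\mu$ of the $\alpha_i$ in $\PMF$, and unique ergodicity alone forces $\mu = \foltheta$---no control of limiting transverse measures is needed. The same device transfers membership to $q'$: the $\alpha_i$ are also cylinder curves of $q'$, their $q'$-directions subconverge to some $\theta'$, and $\I(\nu_{q'}^{\theta'},\foltheta)=0$ together with unique ergodicity gives $\foltheta = \nu_{q'}^{\theta'} \in \PMF(q')$. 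Hence $\PMF(q)$ and $\PMF(q')$ share all their uniquely ergodic points (almost every direction, by Kerckhoff--Masur--Smillie), and any two such shared points feed into the Gardiner--Masur step exactly as in your stage two. In short: replace your characterization ``$\PMF(q)$ is the accumulation set of $\cyl(q)$'' by ``the uniquely ergodic points of $\PMF(q)$ are exactly the uniquely ergodic accumulation points of $\cyl(q)$,'' and your argument closes up and becomes essentially the paper's.
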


\begin{proof}
Suppose $\cyl(q)=\cyl(q')$.  First lift $q$ and $q'$ to arbitrary representatives 
in $\QQ$, also called $q$ and $q'$, so that it is well-defined to talk about particular directions.
Note that a cylinder curve, since it belongs a parallel family of nonsingular representatives, has 
a well-defined direction $\theta\in\RP^1$.
Next, recall that for any quadratic differential, the set of directions with at least
one cylinder is dense in $\RP^1$ by a result of Masur ~\cite{masur-cyl}.
Thus, for every uniquely ergodic foliation $\nu^\theta_q \in \PMF(q)$, there is a sequence of cylinder curves 
$\alpha_i \in \cyl(q)$ for which the directions converge: 
$\theta_i \to \theta$. It follows that 
$$
\nu^{\theta_i}_q \to \nu^\theta_q \qquad\text{as}\qquad i \to \infty.
$$  
Since $\I(\nu^{\theta_i}_q,\alpha_i) = 0$, it
follows that in $\PMF$, up to subsequence, we have $\alpha_i \to \mu \in \PMF$ with $\I(\mu,\nu^\theta_q) = 0$.
Since $\nu^{\theta}_q$ is uniquely ergodic, this means that $\mu$ and $\nu^\theta_q$ are equal, 
and hence $\alpha_i \to \nu^\theta_q$ in $\PMF$. 
From the assumption that $\cyl(q')=\cyl(q)$, it follows that $\nu$ is also in $\PMF(q')$. 
Thus the sets of uniquely ergodic
foliations in $\PMF(q)$ and $\PMF(q')$ are identical.

Consider a pair of uniquely ergodic foliations $\mu_0$ and $\nu_0$ in
$\PMF(q) \cap \PMF(q')$. There is a matrix $M$ (respectively, $M'$) in
$SL_2(\R)$
so that $\mu_0$ and $\nu_0$ are the vertical and the horizontal
foliations
of $M q$ (respectively, $M'q'$). However, there is a unique \Teich geodesic
connecting $\mu_0$ and $\nu_0$ (\cite{Gardiner-Masur}). Therefore, there is a time $t$ for which
$$
M' q' = A_t M q\qquad\text{for}\qquad A_t =\matrix{e^t}00{e^{-t}}.
$$ 
That is, $q'$ is in the $SL(2,\R)$ orbit of $q$, and hence $\H_q=\H_{q'}$.
\end{proof}

\begin{proof}[Proof of Theorem \ref{T:S rigid}]
Suppose $\lambda_{\euS}(q) = \lambda_{\euS}(q')$.  By Lemma \ref{L:spec
gives cyl}, $\cyl(q) = \cyl(q')$ and so Lemma
\ref{L:cyl gives H} implies $\H_q = \H_{q'}$.   A level set of the
length of a given cylinder curve on $\H_q = \H_{q'}$ is a
horocycle.  So if $\alpha,\beta,\gamma \in \cyl(q) = \cyl(q')$ have
distinct directions, then $q$ and $q'$ are
contained in the intersection of the same three distinct horocycles.  As
in the case of flat tori (Proposition
\ref{P:flattorus}), this implies $q = q'$.
\end{proof}

\section{Iso-length-spectral families} \label{S:families}

Here we show constructively that for a set of curves to be spectrally
rigid, its projectivization must not miss
any open set of $\PMF$.

\begin{nonrigid}
Suppose $\xi(S) \geq 2$.  
If $\Sigma \subset \euS \subset \PMF$ and $\overline \Sigma \neq \PMF$, then there is a 
deformation family $\Omega_\Sigma \subset \Flat(S)$ for which 
$\Omega_\Sigma \to \mathbb R^\Sigma$ is constant, and such that
the dimension of $\Omega_\Sigma$ is proportional to the dimension of $\Flat(S)$ itself.
\end{nonrigid}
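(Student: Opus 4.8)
The plan is to transport the Smillie--Vogtmann mechanism to surfaces via Thurston's train tracks. Since $\overline\Sigma\neq\PMF$, I would first fix a nonempty open set $U\subset\PMF\setminus\overline\Sigma$, and then use that train tracks carry open cones of measured foliations forming a neighborhood basis of $\PMF$ to select a filling, birecurrent train track $\tau$ whose cone of carried projective foliations $P(\tau)$ lies inside $U$. This buys two things at once. First, $P(\tau)$ is top-dimensional, so $\tau$ records a genuinely multi-parameter family of weights; second, because $U\cap\Sigma=\emptyset$, no curve of $\Sigma$ is carried by $\tau$, so every $\alpha\in\Sigma$ meets $\tau$ transversely in an efficient, combinatorially stable pattern. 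I would then build a base structure $q_0\in\Flat(S)$ adapted to $\tau$ --- presented in the Euclidean band (zippered-rectangle) model, in which each branch of $\tau$ becomes a flat rectangle and the carried foliation appears as a straight direction of $q_0$ --- and let $\Omega_\Sigma$ be the family obtained by deforming only the finitely many branch parameters (band widths and heights, equivalently the holonomy vectors of the saddle connections of the complementary critical graph), renormalized to unit area.

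The heart of the argument is a linear-algebra reduction. Writing the deformation parameters as a point in a finite-dimensional space $\R^N$ (the branch data of $\tau$), the condition that $q$ remain a flat structure is the linear system of switch/closure relations, and for each fixed $\alpha\in\Sigma$ the flat length $\ell_q(\alpha)$ --- computed by Lemma~\ref{L:length integral}, or directly as a sum of saddle-connection lengths along the stable geodesic pattern of $\alpha$ --- is an explicit function of these parameters. Imposing $\ell_q(\alpha)=\ell_{q_0}(\alpha)$ for all $\alpha\in\Sigma$ produces constraint covectors that, however many curves $\Sigma$ contains, span only a subspace $W^\ast\subseteq\R^N$ of finite dimension. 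This is precisely what tames the (generally infinite, even filling) set $\Sigma$: the solution locus has dimension at least $N-\dim W^\ast$, and I would show $\dim W^\ast<N$ by exhibiting a deformation direction associated to $U$ that no $\Sigma$-curve detects --- this is where $\overline\Sigma\neq\PMF$ is used in an essential way. Reading off $N$ and $\dim W^\ast$ from the branch/switch count of a maximal $\tau$ then yields the advertised $2g-3$ free parameters against $\dim\Flat(S)=12g-14$ in the closed case, hence proportionality.

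I expect the main obstacle to be upgrading infinitesimal invisibility to \emph{exact} constancy while staying inside $\Flat(S)$. The subtle point is that when $\Sigma$ fills, its geodesics must cross the deformed band region, so length preservation cannot come from simply freezing the saddle connections they use; it must come from the crossing pattern being combinatorially rigid (guaranteed by $\Sigma\cap P(\tau)=\emptyset$) so that each $\ell_q(\alpha)$ is an honest affine function of the band parameters over the whole family, not merely to first order. I would therefore devote care to: (i) verifying that small moves in the chosen directions do not alter which branches any $\alpha_q$ traverses, so the affine formulas persist; (ii) checking that the deformed data still satisfy the closure relations, the $\{\pm I\}$ holonomy, unit area, and the angle condition of Remark~\ref{necsuffgeo} that certifies $\CAT(0)$ cone structures, which hold on a neighborhood of $q_0$ in the affine solution set since they are open conditions; and (iii) confirming that the resulting $\Omega_\Sigma$ is a genuine submanifold of $\Flat(S)$ of the claimed dimension. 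Granting these, $\Omega_\Sigma\to\R^\Sigma$ is constant by construction, completing the proof.
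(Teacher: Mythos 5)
Your proposal inverts the key mechanism, and this inversion is fatal. The Smillie--Vogtmann device, as adapted in the paper, makes a deformation invisible precisely to the curves \emph{carried} by the train track: for a carried curve, length is the pairing of its weight vector with the branch-length vector, so any perturbation of branch lengths lying in the orthogonal complement of the space of admissible weight vectors (equivalently, distributed so that gains and losses cancel at each switch) preserves the length of \emph{every} carried curve at once. The non-density hypothesis $\overline\Sigma \neq \PMF$ is then used in the opposite direction from yours: one does not place the carried cone inside $\PMF\setminus\overline\Sigma$; rather, one uses a pseudo-Anosov $h$ with attracting fixed point inside the carried cone $U_\tau$ and a mapping class $\varphi$ placing the repelling fixed point inside the open set missed by $\Sigma$, so that north--south dynamics pushes $h^n\varphi\Sigma$ \emph{into} $U_\tau$ for large $n$. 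Then $\Omega_\Sigma = \varphi^{-1}h^{-n}\Omega$ works because every curve of $\Sigma$ is carried by the transported track. In your setup, by contrast, no curve of $\Sigma$ is carried by $\tau$: every $\alpha\in\Sigma$ crosses the band complex transversally, and the length of such a curve depends on the transverse geometry of the bands (heights, shears), which the switch-compensated deformations genuinely alter. There is no mechanism making those lengths constant; the one invariance principle available applies only to carried curves.

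Concretely, your linear-algebra step does not close. The observation that the constraint covectors of the curves in $\Sigma$ span a subspace $W^\ast\subseteq\R^N$ ``of finite dimension'' is vacuous --- any family of covectors in $\R^N$ does --- and the theorem requires $W^\ast$ to be a \emph{proper} subspace of codimension comparable to $N$. For that you offer only the assertion that some direction ``associated to $U$'' is undetected by $\Sigma$, but a curve's projective class lying outside $U$ says nothing about its length being insensitive to deformations of a band structure it crosses; if anything, transversal curves are maximally sensitive to exactly these parameters. (Note also that $\Sigma$ may be enormous --- e.g., all of $\euS$ minus the curves whose classes lie in a small ball --- so without a structural invariance principle one expects the constraints to cut the solution locus down to a point.) Your concern in the last paragraph about upgrading combinatorial stability to exact constancy is well placed but insufficient: combinatorial stability of the crossing pattern only makes each $\ell_q(\alpha)$ an honest affine function of the parameters; an affine function with nonzero linear part still varies. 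The fix is to redesign the argument so that $\Sigma$ ends up inside the carried cone (via the pseudo-Anosov trick above), and to build the flat structures so that the track is \emph{magnetic} --- geodesic representatives of carried curves lie on the track --- which is what converts the weight-vector pairing into the actual flat length and yields exact constancy, with the dimension count ($2g-3$ in the closed case) coming from an explicit building-block construction rather than from a branch/switch count of a maximal track.
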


In particular, no finite set of curves determines a flat metric. 
We will build deformation families of flat metrics in this section
 based on a train track argument.
We refer the reader to \cite{penner-harer} for a detailed discussion of train tracks.

Given a metric $\rho$ on $S$ (with metric completion $\hat S$), we call a train track $\tau \subset S$ {\em magnetic} with respect to 
$\rho$ if there exists a map $f:(\hat S,P) \to (\hat S,P)$, homotopic to the identity rel $P$, such that 
if $\gamma\subset \tau$ is a curve carried by $\tau$, then $f(\gamma)$ is a $\rho$--geodesic 
representative of $\gamma$ (up to parametrization).  
The magnetizing map $f$ should be thought of as taking a smooth realization of the train 
track to a geodesic realization
(compare Figure~\ref{smoothing} below).
In the examples in this section,  $f$ is a homeomorphism isotopic to the identity. 
More complicated maps $f$ are used to deal with the case of punctures, as presented in the 
appendix.

Informally, a train track is magnetic if geodesics ``stick to it'':  geodesics 
carried by $\tau$ actually live inside of the one-complex $f(\tau)$
as concatenations of the branches.  
Note that while magnetic train tracks
are easily constructed for flat metrics, they do not exist for any hyperbolic metric (or in fact for any 
complete Riemannian metric).

The strategy for proving Theorem \ref{T:nonrigid} is to first construct an initial train track $\tau$ on $S$ and 
a deformation family $\Omega \subset \Flat(S)$ so that $\tau$ is magnetic in $q$ for all 
$q \in \Omega$ and so that the length of any curve $\gamma$ carried by $\tau$ is constant on 
$\Omega$.  The train track $\tau$ we construct is complete and recurrent, hence the subset 
$U_\tau \subset \PML$ consisting of laminations carried by $\tau$ has nonempty interior.  Then, if 
$\Sigma \subset \euS$ is not dense, we will find a mapping class $\psi$
adapted to $\Sigma$ such that $\Sigma\in \psi U_\tau=U_{\psi\tau}$, 
and the deformation family promised in 
the theorem will then be $\psi\Omega$.

The main ingredient needed to prove Theorem \ref{T:nonrigid} is thus the following.
\begin{proposition} \label{P:magnetic family}
If $\xi(S) \geq 2$, then there exists a complete recurrent train track $\tau$ and a 
positive-dimensional family of flat structures $\Omega \subset \Flat(S)$ such that:
\begin{itemize}
\item $\tau$ is magnetic in $q$ for all $q \in \Omega$; and
\item the length of any curve $\gamma$ carried by $\tau$ is constant on $\Omega$.
\end{itemize}
\end{proposition}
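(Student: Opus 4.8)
The plan is to linearize the length function by means of magnetism and then reduce the constancy requirement to a dimension count on the branches of $\tau$. First I would construct a single flat structure $q_0\in\Flat(S)$ together with a complete recurrent train track $\tau$ that is magnetic in $q_0$. The device for guaranteeing magnetism is Remark~\ref{necsuffgeo}: I realize each branch of $\tau$ as a saddle connection of $q_0$ and each switch as a cone point at which the incident saddle connections emanate so that every legal turn subtends angle at least $\pi$ on both sides while the illegal turns form cusps. Asking that $\tau$ be complete (maximal) and recurrent is exactly what makes the set $U_\tau\subset\PML$ of carried laminations have nonempty interior, as needed for Theorem~\ref{T:nonrigid}; concretely I would start from a standard complete recurrent train track and then build a polygon-with-identifications model of $q_0$ realizing it, with as many cone points as possible so that the ambient stratum is large.

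The key observation is that magnetism makes the length of a carried curve a linear function of the branch lengths. If $\gamma$ is carried by $\tau$ and runs over the $i$-th branch with multiplicity $w_i$, then its $q$-geodesic representative is the concatenation $f(\gamma)$ of the corresponding saddle connections, so
\[
\ell_q(\gamma)=\sum_i w_i\,\ell_i(q)=\langle w(\gamma),\ell(q)\rangle,
\]
where $\ell_i(q)$ is the $q$-length of the $i$-th branch and $\ell(q)=(\ell_i(q))_i$ is the branch-length vector. The multiplicity vectors $w(\gamma)$ are precisely the nonnegative integral points of the switch space $W(\tau)$, the real solution space of the switch equations, and for a complete $\tau$ one has $\dim W(\tau)=\dim\ML=2\xi$. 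Hence the lengths of \emph{all} carried curves depend only on the image of $\ell(q)$ in the quotient $W(\tau)^{*}$ (equivalently, on the pairings $\langle w,\ell(q)\rangle$ for $w\in W(\tau)$), and it suffices to produce a positive-dimensional family $\Omega\ni q_0$ along which $\tau$ stays magnetic and this image is constant.

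To produce $\Omega$ I would work inside the stratum $\mathcal H$ of $q_0$, where period coordinates make the $\ell_i(q)$ into explicit (piecewise real-analytic) functions and the strict angle inequalities cut out an open subset on which $\tau$ remains magnetic. Constancy of all carried lengths is exactly the requirement that $q\mapsto[\ell(q)]\in W(\tau)^{*}$ be constant, so $\Omega$ is a fibre of this map, i.e.\ the locus where $\ell(q)-\ell(q_0)$ lies in the length-preserving subspace $W(\tau)^{\perp}$. Choosing $q_0$ so that $\dim\mathcal H>2\xi=\dim W(\tau)^{*}$ — which can be arranged when $\xi(S)\ge 2$, since then there is room for enough cone points — forces the fibre through $q_0$ to have positive dimension; restricting to an explicit sub-family of cone-point motions compatible with the polygon identifications produces the $2g-3$ parameters quoted in the closed case, a dimension proportional to that of $\Flat(S)$.

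The main obstacle is realizability. A first-order motion of the cone points changes each branch length by the projection of the displacement onto that branch's direction, but not every such change is induced by an honest flat metric on $S$: the displacements must respect the closure (relative-period) conditions that keep the polygons glued into a semi-translation structure, must preserve the strict angle inequalities so that $\tau$ stays magnetic and no saddle connection is created or destroyed, and must be compatible with the unit-area and direction normalizations defining $\Flat(S)$. The real content of the argument is therefore the explicit construction of a pair $(q_0,\tau)$ for which the intersection of $W(\tau)^{\perp}$ with the space of admissible cone-point motions is positive-dimensional, together with a verification that the resulting metrics are genuinely distinct in $\Flat(S)$. The punctured case must be treated separately (in the appendix), since there the magnetizing map $f$ can no longer be taken to be a homeomorphism isotopic to the identity.
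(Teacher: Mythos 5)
Your framework is the same one the paper uses: magnetism makes $\ell_q(\gamma)=\langle w(\gamma),\ell(q)\rangle$, so constancy of carried lengths is exactly the condition that $\ell(q)-\ell(q_0)$ lie in the orthogonal complement of the weight space $W(\tau)$, and completeness plus recurrence is what makes $U_\tau$ have nonempty interior. But the step where you actually produce the family $\Omega$ has a genuine gap, in fact two. First, for a smooth (even real-analytic) map, $\dim\mathcal H>\dim W(\tau)^{*}$ does \emph{not} force the fiber through a given point $q_0$ to be positive-dimensional: the map $(x,y)\mapsto x^2+y^2$ has an isolated point as its fiber over $0$. Fiber-dimension theorems of that kind hold for complex-analytic or algebraic maps, not over $\R$; to salvage this you would need a constant-rank argument at a nearby point (where the rank of $d(\pi\circ\ell)$ is locally constant), together with openness of the magnetic condition. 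Second, and worse: the fiber through $q_0$ inside the stratum \emph{always} contains the rotation circle $\{e^{i\theta}q_0\}$, since rotating a quadratic differential changes no lengths, and this entire circle is a single point of $\Flat(S)$. So even a genuinely positive-dimensional fiber can project to a point of $\Flat(S)$; your count must produce fiber dimension at least two, transverse to the rotation direction, and then survive the unit-area normalization. Neither refinement is in the proposal. A further hidden difficulty is your assertion that the strict angle inequalities cut out an open set where $\tau$ stays magnetic: making \emph{all} legal-turn angles strictly greater than $\pi$ is a real constraint (Gauss--Bonnet already forbids a complete track with all switches trivalent at simple zeros, so higher-valence switches are forced, and in the natural configurations some legal turns have angle exactly $\pi$, where magnetism is not an open condition).

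You flag the realizability problem yourself and defer it ("the real content of the argument is therefore the explicit construction of a pair $(q_0,\tau)$..."), but that deferred content \emph{is} the proof. The paper's argument consists precisely of that construction: a genus-one building block $\Delta$ with two boundary components, carrying an explicit flat metric made of two Euclidean cylinders and an explicit train track; $g-1$ copies glued to form $S$; and then an explicit two-parameter perturbation $(\epsilon,\delta)$ per block, verified on the one hand to lie in the row space of the switch matrix (so length changes cancel at switches and carried lengths are constant) and on the other hand to satisfy the cylinder-circumference equations $A_1+\alpha=B_1+\beta$, $A_2+\alpha=B_2+\beta$ that make the perturbed length vector realizable by an honest flat metric with $\tau$ still magnetic. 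That double verification --- membership in $W(\tau)^{\perp}$ \emph{and} geometric realizability with magnetism preserved --- is exactly what your dimension count tries to get for free, and it cannot be gotten for free: the interplay of the two conditions is what cuts the naive expected dimension (which would be far larger) down to the $2(g-1)$ parameters, hence $2g-3$ after the area normalization. As it stands, your proposal is a correct reduction of the problem plus an existence claim that does not follow from the stated counting argument.
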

\begin{proof}

If $\tau$ is a magnetic train track for $\rho$, then there is a nonnegative length vector assigned to 
each branch of $f(\tau)$.  The $\rho$--length of any curve carried by $\tau$ can be computed as the 
dot product of the weight vector for the curve with the length vector, and the allowable weight vectors
are precisely those meeting the switch conditions.
Thus, we must 
construct the family $\Omega$ so that the \textit{difference} between the length vectors for any two 
$q,q' \in \Omega$ lies in the orthogonal complement of the space of weight vectors on $\tau$.  
Geometrically, this means that the difference in length vectors for $q,q' \in \Omega$ can be 
distributed among the switches so that at each switch, the increase in length of the incoming 
branches is exactly equal to the decrease in length for each outgoing branches; see Figure 
\ref{shift}.

\begin{figure}[ht]
\setlength{\unitlength}{0.01\linewidth}
\begin{picture}(80,17)
\put(0,1){\includegraphics[width=80\unitlength]{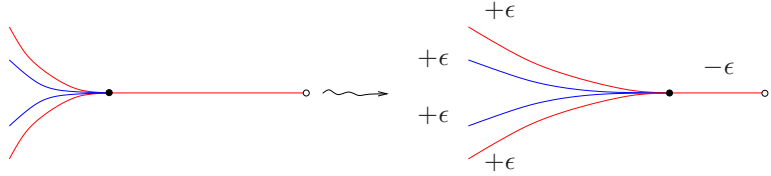}}
\put(50,0){$+\epsilon$}
\put(43,5){$+\epsilon$}
\put(43,11){$+\epsilon$}
\put(50,16){$+\epsilon$}
\put(73,10){$-\epsilon$}
\end{picture}
\caption{Changing the length vectors can be thought of as folding or unfolding at
switches, and leaves invariant the length of curves carried by the train
track.}
\label{shift}
\end{figure}

The idea is to build metrics and train tracks on basic building blocks, then glue them together to obtain  
$S$.  For simplicity, we only provide the details for closed surfaces in this section, as these can all simultaneously 
be handled by constructing a single building block.  To prove the theorem for all surfaces $S$ with $\xi(S) \geq 2$ 
it suffices to construct six more building blocks, using the same general ideas.  For completeness, we have 
included a description of these remaining building blocks in an appendix at the end of the paper.

\begin{figure}[ht]
\setlength{\unitlength}{0.01\linewidth}
\begin{picture}(40,39)
\put(0,0){\includegraphics[width=40\unitlength]{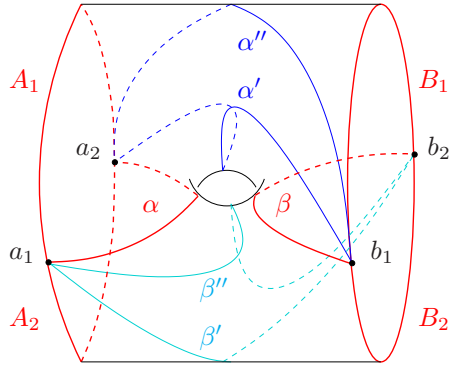}}
\put (-3,11){$a_1$}
\put (35,11){$b_1$}
\put (4,22){$a_2$}
\put (41,22){$b_2$}
\put (11,16){\color{red} $\alpha$}
\put (25,16){\color{red} $\beta$}
\put (-3,29){\color{red} $A_1$}
\put (-3,4){\color{red} $A_2$}
\put (40,29){\color{red} $B_1$}
\put (40,4){\color{red} $B_2$}
\put (17,2){\color{cyan} $\beta'$}
\put (17,7){\color{cyan} $\beta''$}
\put (21,28){\color{blue} $\alpha'$}
\put (21,33){\color{blue} $\alpha''$}
\end{picture}
\caption{One basic building block $\Delta$ and its train track $\tau$.  The cylinder $C_1$
is pictured on the top and $C_2$ on the bottom.
Copies of $\Delta$ can be glued together end to end to obtain a copy of $S$.
\label{chunk}}
\end{figure}

The basic building block $\Delta$ is a genus-one surface with two boundary components described here and shown in Figure~\ref{chunk}.  We will put a metric and a train track on $\Delta$, and then assemble $S$ from $g-1$ copies of $\Delta$ by gluing the boundary components in pairs. Choose nonperipheral arcs $\alpha$ (with endpoints $a_1,a_2$) and $\beta$ (endpoints $b_1,b_2$) joining each boundary component to itself.  Then the complement of those arcs is a pair of annuli. For any choice of $t>0$, there is a unique flat metric on $\Delta$ so that $\ell(\alpha)=\ell(\beta)=t$, and the two complementary annuli $C_i$ are Euclidean cylinders with boundary lengths $2t$ and heights $t$ (shown in Figure~\ref{combchunk}). This means each cylinder will have area $2t^2$, so $\Delta$ will have area $4t^2$.

\begin{figure}[ht]
\setlength{\unitlength}{0.01\linewidth}
\begin{picture}(80,26)
\put(0,4){\includegraphics[width=80\unitlength]{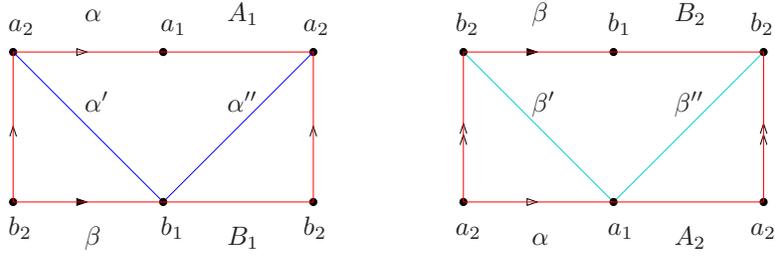}}
\put(0,1){$b_2$}
\put(16,1){$b_1$}
\put(31,1){$b_2$}
\put(8,0){$\beta$}
\put(8,14){$\alpha'$}
\put(23,14){$\alpha''$}
\put(23,0){$B_1$}
\put(47,1){$a_2$}
\put(63,1){$a_1$}
\put(78,1){$a_2$}
\put(55,0){$\alpha$}
\put(55,14){$\beta'$}
\put(70,14){$\beta''$}
\put(70,0){$A_2$}
\put(0,22.5){$a_2$}
\put(16,22.5){$a_1$}
\put(31,22.5){$a_2$}
\put(8,23.5){$\alpha$}
\put(23,23.5){$A_1$}
\put(47,22.5){$b_2$}
\put(63,22.5){$b_1$}
\put(78,22.5){$b_2$}
\put(55,23.5){$\beta$}
\put(70,23.5){$B_2$}
\end{picture}
\caption{Metric pictures of the two cylinders $C_1$ (left) and $C_2$ (right) which make up $\Delta$.}
\label{combchunk}
\end{figure}

Choose the value of $t$ so that $4t^2(g-1)=1$ (in order that the glued surface will have 
total area one). After 
gluing $g-1$ copies of $\Delta$ together end to end, we obtain a flat metric $q_0$ on $S$, 
whose singular points come from the $a_i$ and $b_i$ in the pieces $\Delta$.  
We will choose to initially glue with a quarter-twist (compare Figure 
\ref{gluepiece2}), so that there are four evenly spaced vertices around the gluing curves, and the 
singularities all have cone angle $3\pi$.


Next we build a one-complex $T_0$ of geodesic segments in $q_0$.
In each piece $\Delta$, let $\alpha',\alpha''$ be the minimal-length
segments connecting $a_2$ to $b_1$ in $C_1$, and likewise $\beta',\beta''$ connecting $a_1$ to
$b_2$ in $C_2$ (the length of each of these will be $\sqrt{2}t$); see Figure \ref{chunk}.
Then the branches of $T_0$ are the saddle connections which belong to the boundary of a piece 
$\Delta$,
together with the arcs $\alpha,\alpha',\alpha'',\beta,\beta',\beta''$ in those pieces.
There are switches for $T_0$ at all of the singularities in the flat metric $q_0$.

\begin{figure}[ht]
\setlength{\unitlength}{0.01\linewidth}
\begin{picture}(80,29)
\put(0,0){\includegraphics[width=80\unitlength]{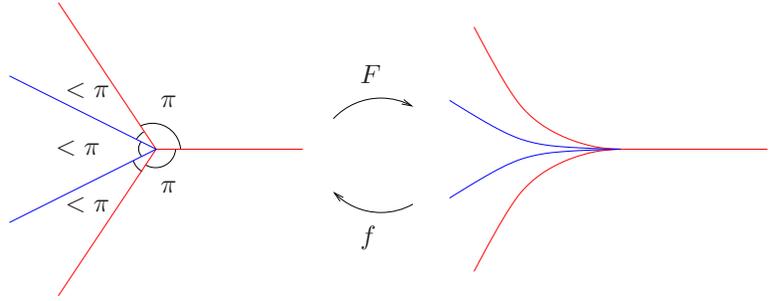}}
\put(16,11){$\pi$}
\put(16,20){$\pi$}
\put(6,21){$<\pi$}
\put(6,9){$<\pi$}
\put(5,15){$<\pi$}
\put(37,22.5){$F$}
\put(37,5.5){$f$}
\end{picture}
\caption{The homeomorphisms of $S$ pictured here map between a geodesic one-complex $T$ 
and a train track $\tau$.  
This figure shows how to use the angles in $T$ to read off the illegal turns at each switch, which specifies
the tangent spaces for $\tau$.  The inverse map $f$ is the magnetizing homeomorphism 
for $\tau$ with respect to the flat metric.\label{smoothing}}
\end{figure}

Each $1$--cell of this complex $T_0$ is smoothly embedded in $S$.  
However, there is no well-defined tangent space at the switches.  
To obtain a train track $\tau$, we apply an appropriate homeomorphism $F$ which is isotopic to the identity.  
That is, we must specify at each switch which branches are incoming and which are outgoing.
For all of the complexes $T$ in the deformation family, every switch in $T$ will have total angle $3\pi$ and
five incident branches, one of which is separated from its neighboring branches by angle $\pi$
on each side.  This determines the tangencies as in Figure \ref{smoothing}.

Any curve  $\gamma\subset\tau$ is mapped by $f=F^{-1}$ to a concatenation of geodesic segments which are 
branches of $T_0=f(\tau)$.  
But then they meet the angle conditions that suffice for geodesity (Remark~\ref{necsuffgeo}), so
$\tau$ is magnetic with respect to $q_0$.  The complementary regions are triangles, so $\tau$ is
complete, and it is straightforward to construct a positive measure on 
$\tau$, thus showing that it is recurrent.

Next we describe a deformation space $\Omega$ of $q_0$ so that 
a choice of parameters specifies a modified 1--complex $T$ (combinatorially equivalent to $T_0$
but with new lengths prescribed by the parameters) and a modified flat metric $q$, so that 
the lengths of curves carried by $\tau$ do not change as the parameters vary.
This will establish that $\tau$ remains magnetic in $q$ over the whole family $\Omega$.
The deformations can be carried out independently in each block, provided we keep track of the gluing 
information.
In each  $\Delta$, the deformations will be parameterized by two numbers 
$\epsilon$ and $\delta$ (small compared to $t$) as follows.

\begin{figure}[ht]
\setlength{\unitlength}{0.01\linewidth}
\begin{picture}(100,29)
\put(0,4){\includegraphics[width=100\unitlength]{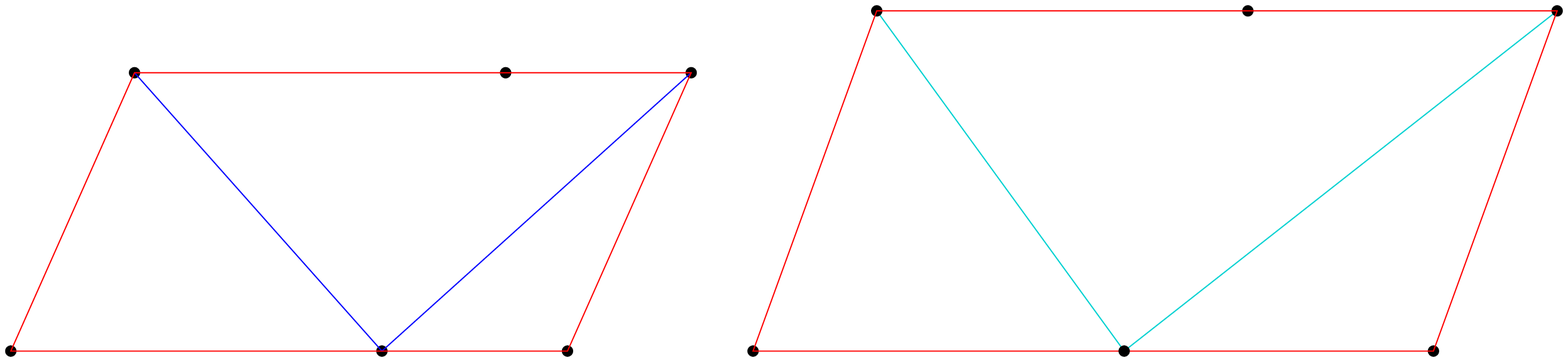}}
\put(5,0){$+\epsilon+\delta$}
\put(25,0){$+\epsilon-\delta$}
\put(55,0){$+\epsilon+\delta$}
\put(78,0){$-\epsilon+\delta$}
\put(14,25){$+\epsilon+\delta$}
\put(34,25){$+\epsilon-\delta$}
\put(64,28){$+\epsilon+\delta$}
\put(86,28){$-\epsilon+\delta$}
\put(18,13){$+2\epsilon$}
\put(26,13){$+2\epsilon$}
\put(66,15){$+2\delta$}
\put(78,15){$+2\delta$}

\end{picture}
\caption{We have two parameters $\epsilon,\delta$ to perturb
the flat structures in each piece $\Delta$. 
Metrically, this can be achieved by deforming the rectangles to parallelograms, adjusting the height and shear appropriately.  (Compare Figure \ref{combchunk}.) \label{moves}}
\end{figure}

Each new metric is built from Euclidean cylinders glued along arcs in the boundary with the same combinatorial 
pattern as that of $q_0$.  
There are four switches and $10$ arcs in $\Delta$.
To guarantee that the resulting one-complex is realizable as a train track, it is necessary that the switch
conditions be preserved; this is equivalent to requiring that the perturbations be in the row space of
the following matrix.  
$$\begin{array}{c|rrrrrrrrrr}
&A_1 & A_2 & B_1 & B_2 & \alpha & \alpha' & \alpha'' & \beta & \beta' & \beta''\\
\hline
a_1&-1 & +1 &0&0&+1&0&0&0&+1&+1\\
a_2&+1&-1&0&0&+1&+1&+1&0&0&0\\
b_1&0&0&+1&-1&0&+1&+1&+1&0&0\\
b_2&0&0&-1&+1&0&0&0&+1&+1&+1
\end{array}$$
A priori, this gives four degrees of freedom.  However, in order for the metric cylinder picture to be 
preserved, we further require two geometric conditions on the lengths of the curves: 
$$
A_1+\alpha=B_1+\beta \qquad\text{and}\qquad A_2+\alpha =B_2+\beta, 
$$
which say that the top and bottom circumferences are equal for each of $C_1$ and $C_2$.
In fact this is necessary and sufficient for the realization by metric Euclidean cylinders, 
as depicted in Figure~\ref{moves}.
(Note that the boundary components of $\Delta$ automatically have equal length because
$A_1+A_2=B_1+B_2$ holds for any perturbation satisfying the switch conditions.)

It follows that there are two free parameters, which we can 
record according to the table below.
\[ \begin{array}{c|c|c|c|c|c|c|c|c|c}
A_1 & A_2 & B_1 & B_2 & \alpha & \alpha' & \alpha'' & \beta & \beta' & \beta''\\
\hline
- \epsilon + \delta & + \epsilon - \delta & - \epsilon + \delta & + \epsilon-\delta & + \epsilon+\delta & + 2 \epsilon & + 
2 \epsilon & + \epsilon + \delta & + 2\delta & + 2 \delta \\\end{array}\]
And indeed the gluing of neighboring pieces $\Delta_i$ is also prescribed by the same parameters,
as illustrated in Figure \ref{gluepiece2}.
It is immediate, by construction, that the lengths of curves $\gamma\subset\tau$ are preserved as
these parameters vary, since changes to the length are compensated at every switch.

\begin{figure}[ht]
\setlength{\unitlength}{0.01\linewidth}
\begin{picture}(80,80)
\put(0,0){\includegraphics[width=80\unitlength]{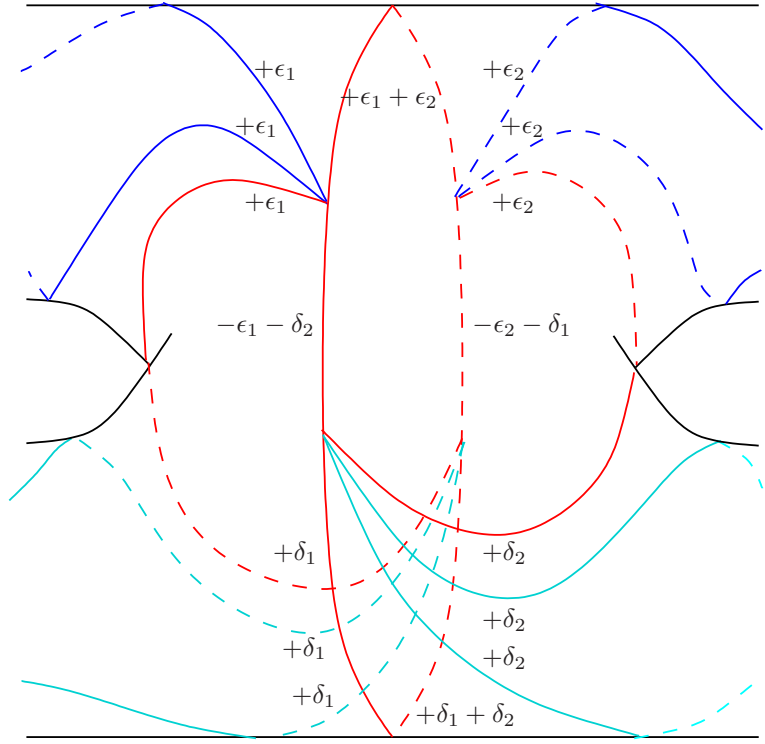}}
\put(26,70){$+\epsilon_1$}
\put(24,64){$+\epsilon_1$}
\put(25,56){$+\epsilon_1$}
\put(35,67){$+\epsilon_1+\epsilon_2$}
\put(50,70){$+\epsilon_2$}
\put(52,64){$+\epsilon_2$}
\put(51,56){$+\epsilon_2$}
\put(22,43){$-\epsilon_1-\delta_2$}
\put(49,43){$-\epsilon_2-\delta_1$}
\put(50,19){$+\delta_2$}
\put(50,12){$+\delta_2$}
\put(50,8){$+\delta_2$}
\put(43,2){$+\delta_1+\delta_2$}
\put(28,19){$+\delta_1$}
\put(29,9){$+\delta_1$}
\put(30,4){$+\delta_1$}
\end{picture}
\caption{The changes in lengths assigned to $\tau$ near one of the gluing curves.  On the left side the deformations are parameterized by $(\epsilon_1,\delta_1)$ in the block $\Delta_1$ 
and on the right by $(\epsilon_2,\delta_2)$ in the block $\Delta_2$.
\label{gluepiece2}}
\end{figure}

If we write $(\bar\epsilon,\bar\delta)=(\epsilon_1,\delta_1,\ldots,\epsilon_{g-1},\delta_{g-1})$ for 
the vector of the parameters, then we obtain a $2(g-1)$--dimensional deformation space
from the perturbed metrics $\{q_0(\bar\epsilon,\bar\delta)\}$. We let 
$$
\Omega=\{q_0(\bar\epsilon,\bar\delta)\}\cap \Flat(S),
$$ 
which is the subspace with unit area;
this has codimension $1$, so $\dim(\Omega)=2g-3$.
\end{proof}

We can now prove Theorem \ref{T:nonrigid} by finding a mapping class to apply to $\Sigma$ 
so that all of the image curves are carried by $\tau$.

\begin{proof}[Proof of Theorem \ref{T:nonrigid}.]
Let $\Omega \subset \Flat(S)$ and $\tau \subset S$ be as in Proposition \ref{P:magnetic family}.
Since $\tau$ is complete and recurrent, the subset $U_\tau \subset \PML$ consisting of those measured laminations carried by $\tau$ has nonempty interior.
Let $h\in\Mod(S)$ be a pseudo-Anosov mapping class whose attracting point in $\PML$ is a lamination $\lambda^+ \in U_\tau$.
By assumption, $\Sigma$ is not dense, so there is an open set $W\in\PML$ such that $\Sigma\cap W=\emptyset$.

Since any orbit of the mapping class group is dense in $\PML$, there is some mapping
class $\varphi\in\Mod(S)$
such that $\lambda^-\in \varphi W$, where $\lambda^-$ is the repelling
lamination of $h$.
But then $\varphi\Sigma$ misses a neighborhood
of $\lambda^-$, so for $n$ sufficiently large, any curve in $h^n \varphi\Sigma$ is carried by $\tau$.
Equivalently, any curve in $\Sigma$ is carried by $\varphi^{-1}h^{-n}\tau$.

Now we set
\[ \Omega_\Sigma=\{\varphi^{-1}h^{-n}q \, | \, q \in \Omega \},\]
and observe that the length of any curve $\gamma \in \Sigma$ is constant on $\Omega_\Sigma$ since it is carried by $\varphi^{-1}h^{-n}(\tau)$, and the property of being magnetic is clearly
preserved when both the train track and the metric are modified by the same mapping class.
\end{proof}

\begin{remark}
Here, we obtain a deformation family of dimension $2g-3$.  We make no claim 
that this is optimal, but note that the optimal dimension is bounded above and below by linear functions in 
$g$, since $\Flat(S)$ itself has dimension $12g-14$.   For the cases covered in the appendix, which 
allow punctures and boundary components, this proportionality holds 
as well:  the number of parameters in the deformation
space is linearly comparable to $g+n+b$, as is the complexity of $S$ and
therefore the dimension of $\Flat(S)$.
\end{remark}

\section{Flat structures as currents}

Bonahon's space of geodesic currents derives its utility from the fact that so many spaces embed into it in
natural ways with respect to the intersection form. For example, the space of measured laminations $\ML$, being the
completion of $\euS$ with respect to $\I$, is easily seen to embed into $\CC(S)$, and the restriction of $\I$ to
$\ML \times \ML$ is Thurston's continuous extension of geometric intersection number from weighted simple curves
to measured laminations.  In this section, we see that $\Flat(S)$ embeds naturally as well.

For closed surfaces, Bonahon constructs an embedding of $\T(S)$ into $\CC(S)$ in \cite{bon-currents} by sending a
hyperbolic metric $m$ to its associated Liouville current $L_m$. This was extended to all negatively curved Riemannian
metrics by Otal in \cite{otal} and to 
negatively curved cone metrics by Hersonsky--Paulin in \cite{her-paul}. Given any
such metric $m$, we will denote the associated current by $L_m$. 
The naturality with respect to $\I$ is expressed by the equation
\[ \I(L_m, \alpha) = \ell_m(\alpha). \]

This extends easily to $\Flat(S)$, and in fact it is possible to carry out this construction for surfaces which are not
necessarily closed.  Given $q \in \QQ(S)$, we can view $\theta \mapsto \foltheta$ as a map $\RP^1 \to \CC(S)$.

\begin{proposition} \label{P:average}
For any $q \in \Flat(S)$ there exists a current $L_q$ such that
\begin{enumerate}
\item for all $\alpha\in\euC'$, \quad $\I(L_q,\alpha)=\ell_q(\alpha)$; \\
\item for all $\mu\in\CC(S)$ and any $q\in\QQ(S)$ inducing the given
$q\in\Flat(S)$,
$$ \I(L_q, \mu) = \frac{1}{2}\int_0^\pi
\I(\foltheta,\mu) \, d \theta;
$$
\item $\I(L_q,L_q)=\pi/2$.
\end{enumerate}
\end{proposition}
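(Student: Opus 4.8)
The plan is to build $L_q$ by averaging the straight foliations $\foltheta$ over all directions, setting
\[ L_q = \frac{1}{2}\int_0^\pi \foltheta \, d\theta, \]
interpreted as a weak* (measure-valued) integral. Concretely, I would approximate by the Riemann sums $L_q^{(n)} = \frac{\pi}{2n}\sum_{j=1}^n \nu_q^{j\pi/n}$, each of which is an honest current since it is a finite nonnegative combination of the currents $\foltheta \in \MF(S) \subset \CC(S)$. To see these converge to a current, I would run the argument from the proof of Theorem \ref{T:metric}: for every $\alpha \in \euC'(S)$ the numbers $\I(L_q^{(n)},\alpha)$ are Riemann sums for $\tfrac12\int_0^\pi \I(\foltheta,\alpha)\,d\theta$, whose integrand is continuous in $\theta$ (as $\theta \mapsto \foltheta$ is continuous into $\CC(S)$), and by Lemma \ref{L:length integral} they converge to $\ell_q(\alpha)$. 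In particular $\I(L_q^{(n)},\alpha_0)$ is bounded for a filling curve $\alpha_0$, so Proposition \ref{P:compact currents} places the sequence in a compact set, and Theorem \ref{T:like otal} (or Theorem \ref{T:otal} in the closed case) forces all subsequential limits to coincide; hence $L_q^{(n)}$ converges to a well-defined current $L_q$. The $\pi_1(S)$--invariance is inherited from the $\foltheta$, and since the average is rotation-invariant the resulting current depends only on the class in $\Flat(S)$ and not on the chosen lift to $\QQ(S)$.

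For property (2), I would use that Bonahon's intersection pairing is continuous and bilinear (Theorem \ref{T:i cont}), so for fixed $\mu \in \CC(S)$ the map $\nu \mapsto \I(\nu,\mu)$ is a continuous linear functional. Applying it to the Riemann sums and passing to the limit,
\begin{align*}
\I(L_q,\mu) &= \lim_{n\to\infty}\I(L_q^{(n)},\mu) = \lim_{n\to\infty}\frac{\pi}{2n}\sum_{j=1}^n \I(\nu_q^{j\pi/n},\mu)\\
&= \frac{1}{2}\int_0^\pi \I(\foltheta,\mu)\,d\theta,
\end{align*}
the last step being Riemann-sum convergence for the continuous integrand $\theta \mapsto \I(\foltheta,\mu)$. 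Property (1) is then the special case $\mu = \alpha$ with $\alpha \in \euC'(S)$, combined with Lemma \ref{L:length integral}; for the proper-path elements of $\euC'(S)$ in the punctured case the same argument applies, with continuity of $\I(\cdot,\alpha)$ supplied by Lemma \ref{L:proper intersection}.

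For property (3), I would iterate property (2): taking $\mu = L_q$ and then expanding $\I(\foltheta,L_q)=\I(L_q,\foltheta)$ by (2) again yields
\[ \I(L_q,L_q) = \frac{1}{4}\int_0^\pi\!\!\int_0^\pi \I(\nu_q^\phi,\foltheta)\,d\phi\,d\theta. \]
Everything reduces to the intersection number of two straight foliations on the unit-area flat surface. Working in natural coordinates $\zeta=x+iy$ with $\sqrt q = d\zeta$, the transverse measure of $\foltheta$ is $|\omega_\theta|$ where $\omega_\theta = \Real(e^{i\theta}\sqrt q) = \cos\theta\,dx - \sin\theta\,dy$, and since straight foliations on a flat surface meet transversally with no bigons, their geometric intersection number equals the total mass of the wedge:
\[ \I(\nu_q^\phi,\foltheta) = \int_S |\omega_\phi \wedge \omega_\theta| = |\sin(\phi-\theta)|\int_S dx\,dy = |\sin(\phi-\theta)|, \]
using $\area(q)=1$. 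Since $\int_0^\pi |\sin(\phi-\theta)|\,d\phi = 2$ for every $\theta$ (any length-$\pi$ interval of $|\sin|$ integrates to $2$), the double integral equals $2\pi$ and $\I(L_q,L_q) = \tfrac14\cdot 2\pi = \pi/2$.

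The main obstacle I expect is the middle equality of the last display: identifying the geometric intersection number of the two straight foliations with $\int_S|\omega_\phi\wedge\omega_\theta|$. This requires that the straight representatives be in minimal position, i.e. that two families of flat geodesics in distinct directions bound no bigons, which is where the $\CAT(0)$ geometry enters; the bookkeeping at the cone singularities (and, in the punctured case, at the completion points) needs care. By contrast, the construction of $L_q$ and the interchange of $\I$ with the integral in properties (1)--(2) are routine given the continuity of $\I$ and the metric and compactness results already established.
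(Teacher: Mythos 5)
Your proposal matches the paper's proof in its essentials: $L_q$ is defined as the Riemann-integral average $\frac{1}{2}\int_0^\pi \foltheta\,d\theta$, part (1) comes from Lemma \ref{L:length integral}, part (2) from exchanging $\I$ with the limiting Riemann sums, and part (3) by feeding $L_q$ back into (2), exactly as the paper does. The differences are only in bookkeeping: the paper deduces existence of the integral from uniform continuity of $\theta \mapsto \foltheta$ together with completeness of the metric $d$ (rather than your compactness-plus-Otal subsequence argument), handles general $\mu$ in (2) via density of $\R_+ \times \euC$ in $\CC(S)$ rather than by direct bilinearity-and-continuity of $\I$ on the Riemann sums, and in (3) simply cites that $\foltheta$ has $q$-length $1$ (so $\I(L_q,\foltheta)=1$) where you verify the underlying $|\sin(\phi-\theta)|$ intersection formula explicitly.
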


\begin{proof}
We can define $L_q$ by a Riemann integral
\[ L_q = \frac{1}{2} \int_0^\pi \nu^\theta_q d \theta\]
by which we mean a limit of Riemann sums.   Since $\RP^1$ is compact, the map $f(\theta) = \foltheta$
is uniformly continuous.  As $d$ is complete, this integral exists.

For any $\alpha \in \euC'$, we recall the formula from Lemma \ref{L:length integral}
\[ \ell_q(\alpha) = \frac{1}{2} \int_0^\pi \I(\foltheta,\alpha) \ d \theta.\]  
Combining this with the uniform continuity of $\foltheta$ implies
part (1) and also part (2) for any current $\mu$ which is a scalar multiple of a current associated to a curve.  For
general currents we appeal to the density of $\mathbb R_+ \times \euC$ in $\CC(S)$ and the continuity of intersection number. Since the foliations $\foltheta$ have
$q$-length $1$ (and so $\I(L_q,\foltheta) = 1$), the third statement follows from the second statement by the computation
\begin{equation*}
\I(L_q,L_q) = \frac 12 \int_0^\pi \I(L_q, \foltheta) \, d\theta
= \frac 12 \int_0^\pi d\theta = \frac{\pi}2. \qedhere
\end{equation*}

\end{proof}

\begin{remark} In the closed case,
an equivalent definition of $L_q$ can be given as a cross-ratio, as in Hersonsky-Paulin.
\end{remark}

The embedding of $\Flat(S)$ in $\CC(S)$ is now immediate.

\begin{current-embedding}
There is an embedding
\[ \Flat(S) \to \CC(S)\]
denoted by $q\mapsto L_q$ so that for $q\in\Flat(S)$ and $\alpha\in\euC'$,
we have
$\I(L_q,\alpha)=\ell_q(\alpha)$.
Furthermore, after projectivizing, $\Flat(S)\to \PCC(S)$ is still an
embedding.
\end{current-embedding}

\begin{proof}
If $q_n \to q$ in $\Flat(S)$, then $\ell_{q_n}(\alpha) \to
\ell_q(\alpha)$, and hence $L_{q_n} \to L_q$ by
Theorems \ref{T:metric} and \ref{T:metric2}. Thus, $q \mapsto L_q$ is continuous.

Injectivity for $\Flat(S)\to\CC(S)$ follows directly from
Theorem~\ref{T:S rigid}, where we have shown that
 even intersection with elements of $\euS$ distinguishes flat metrics.
Injectivity for $\Flat(S)\to\PCC(S)$ follows from the fact that $\I(L_q,L_q)$
is constant, which ensures that no two currents in the image of $\Flat(S)$
can be multiples of one another.

Finally, to see that these maps are embeddings, we need only show that if $q_n$ exits every compact set in 
$\Flat(S)$, then $L_{q_n}$ has no subsequence which converges to a point of (the image of) $\Flat(S)$.  To see 
this, observe that if the lengths of all simple closed curves were bounded away from zero and infinity as 
$n\to \infty$, then $q_n$ would stay in a compact part of $\CC(S)$. 
So first suppose there exists $\gamma \in \euS$ for which 
$$
\I(L_{q_n},\gamma) = \ell_{q_n}(\gamma) \to \infty.
$$ 
In this case $L_{q_n} \to \infty$ in $\CC(S)$ and (as we show in the proof of Theorem 
\ref{T:boundary}) any projectively convergent subsequence $L_{q_n}$ must converge 
to a measured lamination, thus exiting $\Flat(S)$.  The second possibility is that there 
is a lamination $\lambda \in \ML$ with 
$$
\I(L_{q_n},\lambda) = \ell_{q_n}(\lambda) \to 0.
$$  
Since $\I(L_q,\lambda) > 0$ for any $q \in \Flat(S)$, it follows 
that any limit of $L_{q_n}$ does not lie in $\Flat(S)$.
\end{proof}

As a consequence of the embedding, we find that the length of a lamination in a flat metric is well-defined.
\begin{corollary} \label{C:flat continuous}
The flat-length function $\Flat(S)\times \euS(S) \to \R$ has a
continuous homogeneous extension
\[\overline{\ell}:\Flat(S) \times \MF(S) \to \mathbb R.\]
given by
\[(q,\mu) \mapsto \overline{\ell}_q(\mu) = \I(L_q,\mu).\]
\end{corollary}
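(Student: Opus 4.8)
The plan is to take the intersection formula in the statement as the \emph{definition} of the extension, interpreting $\I(L_q,\mu)$ through Bonahon's bilinear pairing on $\CC(S)$ together with the inclusion $\MF(S) \subset \CC(S)$, and then to check the three things that ``continuous homogeneous extension'' demands: that the formula agrees with the flat length on $\euS$, that it is homogeneous in the foliation variable, and that it is jointly continuous. Since Theorem \ref{T:current embedding} already provides $q \mapsto L_q$ as a well-defined (continuous) map $\Flat(S) \to \CC(S)$, and since $\MF(S)$ sits inside $\CC(S)$ in Bonahon's framework, the right-hand side $\I(L_q,\mu)$ makes sense for every $q \in \Flat(S)$ and $\mu \in \MF(S)$.

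First I would verify that $\overline{\ell}$ genuinely extends the flat-length function: for $\mu = \alpha \in \euS$, part (1) of Proposition \ref{P:average} gives $\I(L_q,\alpha) = \ell_q(\alpha)$, so $\overline{\ell}_q$ restricts to $\ell_q$ on $\euS$. Homogeneity is then immediate from the bilinearity of $\I$ asserted in Theorem \ref{T:i cont}: scaling a measured foliation by $t \in \R_+$ scales the associated current by $t$, whence $\overline{\ell}_q(t\mu) = \I(L_q, t\mu) = t\,\I(L_q,\mu) = t\,\overline{\ell}_q(\mu)$.

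For joint continuity I would factor $(q,\mu) \mapsto \overline{\ell}_q(\mu)$ as the composition $(q,\mu) \mapsto (L_q,\mu) \mapsto \I(L_q,\mu)$. The first map is continuous because $q \mapsto L_q$ is continuous (indeed an embedding) by Theorem \ref{T:current embedding}, and because the inclusion $\MF(S) \hookrightarrow \CC(S)$ is continuous; the second map is continuous by Theorem \ref{T:i cont}. Continuity of $\overline{\ell}$ follows. For uniqueness of the extension I would appeal to the density of $\R_+ \times \euS$ in $\MF(S)$: homogeneity pins down the values of any extension on the cone $\R_+ \times \euS$ from its values on $\euS$, and continuity together with density then forces agreement on all of $\MF(S)$, so $\I(L_q,\cdot)$ is the unique continuous homogeneous extension.

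The point requiring the most care — more a subtlety than an obstacle — is the compatibility of topologies: I must be sure that the standard topology on $\MF(S)$ coincides with the subspace topology it inherits as a subset of $\CC(S)$, so that a convergent sequence of foliations really does converge as currents and thereby feeds into the continuity of Bonahon's pairing. This is exactly what the embedding $\MF(S) \hookrightarrow \CC(S)$ in Bonahon's theory supplies, so once that is invoked the argument closes without further computation.
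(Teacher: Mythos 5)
Your proposal is correct and follows essentially the same route as the paper, which states the corollary as an immediate consequence of Theorem \ref{T:current embedding}: define $\overline{\ell}_q(\mu) = \I(L_q,\mu)$, invoke Proposition \ref{P:average}(1) for agreement on $\euS$, bilinearity of $\I$ for homogeneity, and continuity of $q \mapsto L_q$ together with Theorem \ref{T:i cont} for joint continuity. Your extra remarks on uniqueness via density of $\R_+ \times \euS$ and on the compatibility of the topology of $\MF(S)$ with its inclusion in $\CC(S)$ are fine and consistent with the paper's framework.
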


We can now prove the main theorem.

\begin{rigid-iff-dense}
If $\xi(S) \geq 2$, then $\Sigma\subset\euS\subset\PMF$ is spectrally rigid over $\Flat(S)$ if
and only if $\Sigma$ is dense in $\PMF$.
\end{rigid-iff-dense}
\begin{proof}
We first assume $\Sigma$ is dense in $\PMF$.  Suppose 
$q,q' \in \Flat(S)$ have $\ell_q(\alpha) = \ell_{q'}(\alpha)$ for all $\alpha \in \Sigma$.
For any $\mu\in\MF$, the density hypothesis implies that there are scalars $t_i$ and curves $\alpha_i\in\Sigma$ such that $t_i\alpha_i\to\mu$. But 
$$
\ell_q(t_i\alpha_i)=\ell_{q'}(t_i\alpha_i),
$$ 
so Corollary \ref{C:flat continuous} implies $\ell_{q}$ and $\ell_{q'}$ agree on $\mu$.
In particular, the two metrics assign the same length to all simple closed curves. By Theorem \ref{T:S rigid}, it follows that $q= q'$, and thus $\Sigma$ is spectrally rigid.

Next assume that $\Sigma$ is not dense in $\PMF$.  Theorem \ref{T:nonrigid} implies the existence of a positive-dimensional family $\Omega_\Sigma \subset \Flat(S)$ for which the lengths of curves in $\Sigma$ is constant.  In particular, there exists a pair of distinct flat structures $q,q' \in \Omega_\Sigma$ for which $\ell_q(\alpha) = \ell_{q'}(\alpha)$ for all $\alpha \in \Sigma$, and hence $\Sigma$ is not spectrally rigid.
\end{proof}

\section{The boundary of $\Flat(S)$}   \label{boundaryflat}

In this section we give a description of the geodesic currents that
appear in the
closure of $\Flat(S) \subset \PCC(S)$. We will show that the limit
points have
geometric interpretations as a hybrid of a flat structure on some
subsurface and
a geodesic lamination on a disjoint subsurface
(Theorem~\ref{T:boundary}).  We call
such currents {\em mixed structures}. As a first step, we show
that the description of $L_q$ as average intersection number with
foliations
$\foltheta$ (Proposition~\ref{P:average}, part (2)) extends to any
limiting geodesic current. This description greatly simplifies the
analysis of what
geodesic currents can appear as degenerations of flat metrics.

To every nonzero quadratic differential, we consider again the map 
$$\RP^1 \to \ML(q)\subset \ML(S)$$ 
sending $\theta \mapsto \foltheta$, the foliation in direction $\theta$. We show
that given a sequence of quadratic differentials whose associated currents converge in 
$\CC(S)$, the maps $\theta \to \foltheta$ converge uniformly (up to subsequence)
to a continuous map from $\RP^1$ to $\ML(S)$.

\begin{lemma} \label{L:Lip}
For all $q\in\QQ(S)$,
$\alpha \in \euC'$, and angles $\theta_0$ and $\theta_1$, we have
$$
\Big| \I(\nu_q^{\theta_1},\alpha) - \I( \nu_q^{\theta_0},\alpha) \Big| \leq \ell_q(\alpha)\cdot |\theta_1 - \theta_0|.
$$
It follows that $\theta \mapsto \foltheta$ is Lipschitz.
\end{lemma}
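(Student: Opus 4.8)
The plan is to reduce the inequality to an elementary pointwise estimate on the saddle-connection decomposition of the geodesic representative of $\alpha$, and then to deduce the Lipschitz assertion from the metric on $\CC(S)$. First I would fix a $q$--geodesic representative $\alpha_q = \alpha^1 \cdots \alpha^k$, written as a concatenation of saddle connections, where $\alpha^j$ has Euclidean length $r_j$ and makes angle $\theta_j$ with the horizontal direction. The starting point is the pointwise identity already used (before integrating) in the proof of Lemma \ref{L:length integral}: since the flat geodesic $\alpha_q$ is quasi-transverse to every straight foliation $\foltheta$, its transverse measure computes the intersection number, so
\[ \I(\foltheta,\alpha) = \sum_{j=1}^k \int_{\alpha^j} |\Real(e^{i\theta}\sqrt{q}\,)| = \sum_{j=1}^k r_j\,|\cos(\theta+\theta_j)|. \]

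With this formula in hand, the inequality follows term by term. For each $j$, the reverse triangle inequality gives $\bigl| \, |\cos(\theta_1+\theta_j)| - |\cos(\theta_0+\theta_j)| \, \bigr| \le |\cos(\theta_1+\theta_j) - \cos(\theta_0+\theta_j)|$, and since $\cos$ is $1$--Lipschitz on $\R$ the right-hand side is at most $|\theta_1-\theta_0|$. Multiplying by $r_j$, summing over $j$, and using $\sum_j r_j = \ell_q(\alpha)$ yields
\[ \bigl| \I(\nu_q^{\theta_1},\alpha) - \I(\nu_q^{\theta_0},\alpha) \bigr| \le \Bigl( \sum_{j=1}^k r_j \Bigr) |\theta_1 - \theta_0| = \ell_q(\alpha)\,|\theta_1-\theta_0|, \]
which is the claimed bound. (The expression is manifestly $\pi$--periodic in $\theta$, so it descends to $\RP^1$.)

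To obtain the Lipschitz conclusion I would pass to the proper metric $d$ on $\CC(S)$ supplied by Theorems \ref{T:metric} and \ref{T:metric2}, namely $d(\mu_1,\mu_2) = \sum_\alpha t_\alpha |\I(\mu_1,\alpha) - \I(\mu_2,\alpha)|$. Applying the pointwise bound above to each term gives
\[ d(\nu_q^{\theta_1},\nu_q^{\theta_0}) \le |\theta_1-\theta_0| \sum_{\alpha} t_\alpha\, \ell_q(\alpha). \]
The sum on the right is finite: writing $\ell_q(\alpha) = \I(L_q,\alpha)$ via Proposition \ref{P:average}, it is exactly $\sum_\alpha t_\alpha \I(L_q,\alpha)$, which converges for any current (this convergence is established in the proof of Theorem \ref{T:metric}). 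Thus $\theta \mapsto \foltheta$ is Lipschitz with constant $\sum_\alpha t_\alpha \ell_q(\alpha)$.

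I do not expect a serious obstacle here; the two points that require care are both bookkeeping. The first is the pointwise identity for $\I(\foltheta,\alpha)$: one must note that it holds for \emph{every} $\theta$, including the exceptional directions in which some saddle connection $\alpha^j$ runs along a leaf of $\foltheta$, where both the contribution $r_j|\cos(\theta+\theta_j)|$ and the actual transverse measure along $\alpha^j$ vanish. The second is the passage from the real parameter to $\RP^1 = \R/\pi\Z$ together with the finiteness of $\sum_\alpha t_\alpha \ell_q(\alpha)$, both handled by citing the earlier results.
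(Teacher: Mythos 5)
Your proof is correct and takes essentially the same approach as the paper: the paper writes $\I(\foltheta,\omega)=\ell_q(\omega)\,|\sin(\theta-\phi)|$ for each saddle connection, bounds the $\theta$--derivative by $\ell_q(\omega)$, and integrates, which is the same $1$--Lipschitz estimate on the angular factor that you obtain term by term via the reverse triangle inequality. Your explicit final step, bounding $d(\nu_q^{\theta_1},\nu_q^{\theta_0})$ by $|\theta_1-\theta_0|\sum_\alpha t_\alpha \ell_q(\alpha)$ in the metric of Theorems \ref{T:metric} and \ref{T:metric2}, is exactly how the paper uses the lemma in the proof of Proposition \ref{P:uniform convergence}.
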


\begin{proof}
Let $\omega$ be a saddle connection contained
 in a $q$--geodesic representative of $\alpha$. Assume $\omega$ has an angle $\phi$.
We have $\I(\foltheta,\omega) = \ell_q(\omega)\cdot |\sin(\theta - \phi)|$. Hence
$$
\left| \frac {d}{d \theta} \I(\foltheta,\omega) \right| = \ell_q(\omega)\cdot |\cos(\theta - \phi)| \leq
\ell_q(\omega).
$$
Integrating the above inequality from $\theta_0$ to $\theta_1$ and adding up over all saddle connections of $\alpha$ proves the lemma.
\end{proof}

\begin{proposition} \label{P:uniform convergence}
Let $q_n$ be a sequence of quadratic differentials so that $s_nL_{q_n}$
converges in $\CC(S)$ to a geodesic current $L_\infty$. Then, after possibly passing to a subsequence, the
sequence of functions
$$
f_n \from \RP^1 \to \ML(S), \qquad
f_n(\theta) = s_n\nu_{q_n}^\theta
$$
converges uniformly to a
continuous function
$$f_\infty \from \RP^1 \to \ML(S).$$
\end{proposition}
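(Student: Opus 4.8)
The plan is to deduce uniform convergence from the Arzel\`a--Ascoli theorem, working with the proper metric $d$ on $\CC(S)$ supplied by Theorem~\ref{T:metric} (and Theorem~\ref{T:metric2} in the punctured case). Concretely, I would verify the two hypotheses of Arzel\`a--Ascoli for the family $\{f_n\}$ of maps $\RP^1 \to \CC(S)$: that the family is uniformly equicontinuous, and that there is a single compact subset of $\CC(S)$ containing every value $f_n(\theta)$. Since $\RP^1$ is compact and $(\CC(S),d)$ is complete (indeed proper), these two facts give a uniformly convergent subsequence whose limit $f_\infty$ is automatically continuous. The two estimates both rest on the Lipschitz bound of Lemma~\ref{L:Lip}.

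For equicontinuity, I would compute directly with the formula $d(\mu_1,\mu_2)=\sum_\alpha t_\alpha|\I(\mu_1,\alpha)-\I(\mu_2,\alpha)|$ and the nonnegativity of intersection numbers:
\[
d\big(f_n(\theta_1),f_n(\theta_0)\big) = s_n\sum_\alpha t_\alpha\,\big|\I(\nu_{q_n}^{\theta_1},\alpha)-\I(\nu_{q_n}^{\theta_0},\alpha)\big| \le |\theta_1-\theta_0|\; d\big(s_nL_{q_n},0\big),
\]
where the inequality combines the term-by-term bound of Lemma~\ref{L:Lip} with the identity $\sum_\alpha t_\alpha\ell_{q_n}(\alpha)=d(L_{q_n},0)$ and the homogeneity of $d$. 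Because $s_nL_{q_n}\to L_\infty$, the distances $d(s_nL_{q_n},0)$ are bounded by some $M$, so every $f_n$ is $M$-Lipschitz and the family is uniformly equicontinuous.

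For the pointwise compactness, I would fix a filling curve $\alpha_0\in\euS$, whose current is binding (in the punctured case, replace $\alpha_0$ by the finitely many proper geodesics arising from a filling curve on $DS$, exactly as in the proof of Theorem~\ref{T:metric2}), and set $g_n(\theta)=\I(f_n(\theta),\alpha_0)\ge 0$. By Lemma~\ref{L:length integral},
\[
\int_0^\pi g_n(\theta)\,d\theta = 2s_n\ell_{q_n}(\alpha_0) = 2\,\I\big(s_nL_{q_n},\alpha_0\big),
\]
which converges and is therefore bounded; meanwhile Lemma~\ref{L:Lip} shows $g_n$ is Lipschitz with constant $s_n\ell_{q_n}(\alpha_0)=\I(s_nL_{q_n},\alpha_0)$, again uniformly bounded. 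A nonnegative Lipschitz function on $[0,\pi]$ with uniformly bounded Lipschitz constant and uniformly bounded integral is uniformly bounded (otherwise a large value would force a correspondingly large integral), so there is $R>0$ with $\I(f_n(\theta),\alpha_0)\le R$ for all $n$ and $\theta$. By Proposition~\ref{P:compact currents} every $f_n(\theta)$ then lies in the single compact set $\{\mu:\I(\mu,\alpha_0)\le R\}$.

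Arzel\`a--Ascoli now yields a subsequence along which $f_n\to f_\infty$ uniformly, with $f_\infty$ continuous; and since each $f_n(\theta)\in\ML(S)$ while $\ML(S)$ is closed in $\CC(S)$, the limit takes values in $\ML(S)$, as required. I expect the pointwise compactness step to be the main obstacle: the hypothesis only controls the \emph{average} $s_nL_{q_n}=\tfrac{s_n}{2}\int_0^\pi\nu_{q_n}^\theta\,d\theta$, so bounding each individual value $f_n(\theta)$ requires upgrading this integral bound to a pointwise one, and it is precisely the Lipschitz estimate of Lemma~\ref{L:Lip} that makes this possible.
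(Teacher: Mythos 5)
Your proof is correct, and its skeleton---Arzel\`a--Ascoli applied to $\{f_n\}$ as maps into $(\CC(S),d)$, with equicontinuity obtained from Lemma~\ref{L:Lip} exactly as you compute---is the same as the paper's. Where you diverge is the pointwise compactness step, and there your route is more elaborate than necessary. You treat the hypothesis as controlling only the average $\frac{s_n}{2}\int_0^\pi \nu_{q_n}^\theta\,d\theta$ and then upgrade to a pointwise bound via the Lipschitz-plus-bounded-integral argument for a single filling curve, finishing with Proposition~\ref{P:compact currents}. The paper instead observes that the needed pointwise bound is already available for \emph{every} curve: for each $\alpha\in\euC'(S)$ and each $\theta$ one has $\I(\nu_{q_n}^\theta,\alpha)\le\ell_{q_n}(\alpha)$, since each saddle connection of the geodesic representative contributes $r_j\lvert\cos(\theta-\theta_j)\rvert\le r_j$ to the intersection number. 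Hence
\[
d\big(f_n(\theta),0\big)\;\le\;\sum_{\alpha} t_\alpha\,\I\big(f_n(\theta),\alpha\big)\;\le\;\sum_{\alpha} s_n t_\alpha\,\ell_{q_n}(\alpha)\;=\;d\big(s_nL_{q_n},0\big)\;\le\;K,
\]
so all values $f_n(\theta)$ lie in the closed $K$--ball about $0$, which is compact because $d$ is proper (Theorems~\ref{T:metric} and \ref{T:metric2}). Thus the ``main obstacle'' you flag dissolves: no averaging-to-pointwise upgrade is needed, because the flat length of a curve dominates its intersection with the direction-$\theta$ foliation uniformly in $\theta$. Your alternative is nonetheless valid---your elementary claim that a nonnegative $L$-Lipschitz function on $[0,\pi]$ with integral at most $I$ is uniformly bounded does hold---but it buys nothing extra and requires the filling-curve bookkeeping (and its doubled-surface variant) that the paper's one-line estimate avoids.
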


\begin{proof}
We can consider $f_n$ as maps from $\RP^1$ to $\CC(S)$. Since
$\ML(S)$ is a closed subset of $\CC(S)$, the image of the limiting map
$f_\infty$ is automatically in $\ML(S)$, provided it exists.

Equip $\CC(S)$ with the metric in Theorem \ref{T:metric} (or \ref{T:metric2} for punctured surfaces).  By the
Arzel\'a-Ascoli theorem, it is sufficient to show that the family of maps $f_n$ is equicontinuous with respect to this
metric and the union of the images have compact closure. For angles $\theta_0$ and $\theta_1$ we have
\begin{align*}
d \big(f_n(\theta_1), f_n(\theta_0) \big)
&= \sum_{\alpha \in \euC'(S)}
s_n t_\alpha \Big|\I (\nu_{q_n}^{\theta_1}, \alpha)-
\I(\nu_{q_n}^{\theta_0}, \alpha)
\Big| \\
&\le |\theta_1- \theta_0| \sum_{\alpha \in \euC'(S)} s_n t_\alpha\,
\ell_{q_n}(\alpha)\\
&= |\theta_1 - \theta_0| \cdot d(s_nL_{q_n},0).
\end{align*}
The inequality follows from Lemma \ref{L:Lip}, and the equalities are immediate from the definition the metric, together with Proposition \ref{P:average}. Since 
$$
d(s_nL_{q_n},0) \to d(L_\infty,0),
$$ 
there exists $K > 0$ such that $d(s_nL_{q_n},0) \leq K$, and so the family of maps 
$\{f_n\}$ is equicontinuous.

It remains to show that the $\cup_n f_n(\RP^1)$ has compact closure.  Observe that
\[ \I(f_n(\theta),\alpha) = \I(s_n \nu^\theta_q,\alpha) \leq s_n\ell_{q_n}(\alpha).\]
Therefore,
\[ d(f_n(\theta),0) \leq \sum_{\alpha \in \euC'(S)} t_\alpha \, \I(f_n(\theta),\alpha)
\leq \sum_{\alpha \in \euC'(S)}
s_n t_\alpha \ell_{q_n}(\alpha) = d(s_nL_{q_n},0).\]
and so $\cup_n f_n(\mathbb R \mathbb
P^1)$ is contained in the closed $K$--ball about $0$.  Since $d$ is proper, this ball is compact.
\end{proof}

We now define \textit{mixed structures} on $S$.  This requires us to first make precise what we will mean by a flat structure on a subsurface.

Suppose $X \subset S$ is a $\pi_1$--injective subsurface of $S$ with negative Euler characteristic.
We view $X$ as a punctured surface (removing every boundary component), and let $\Flat(X)$ denote the space of flat structures on $X$.
By this we mean a flat structure on each component of $X$ as described in Section \ref{S:flatstructures}, where we now require the sum of the areas of the components to be one.  Observe that the boundary curves of $X$ are realized by punctures and hence have length $0$.
Equivalently, an element of $\Flat(X)$ is given by a unit-norm quadratic differential in $\calQ(X)$, nonzero on all components, and well-defined up to multiplication by a unit-norm complex number in each component.
Representing any $q \in \Flat(X)$ by a unit-norm quadratic differential, we have the map $\RP^1 \to \ML(X)$ given by $\theta \mapsto \nu_q^\theta$ as before.
Extending measured laminations on $X$ to measured laminations on $S$ in the usual way, we can view $\theta \mapsto \nu_q^\theta$ as a map into $\ML(S) \subset \CC(S)$.

Given a subsurface $X \subset S$ as above, $q \in \Flat(S)$, and a measured lamination $\lambda \in \ML(S)$ whose support can be homotoped disjoint from $X$, we define a mixed structure $\eta = (X,q,\lambda)$ to be the geodesic current given by
\[ \eta = \lambda + \frac 12 \int_0^\pi \nu_q^\theta d\theta.\]
Here the integral is a Riemann integral, as in the proof of Proposition \ref{P:average}.  
For brevity we can write $\eta = \lambda + L_q$.
It follows that for every $\alpha \in \euC'(S)$,
\[
\I(\eta, \alpha) = \I(\lambda, \alpha) + \frac 12 \int_0^\pi
\I(\nu_q^\theta, \alpha) \, d\theta.
\]
We also allow the two degenerate situations $X = S$ and $X = \emptyset$.  In these cases, the corresponding mixed structure is a flat structure on $S$ or a measured lamination on $S$, respectively.

Now let $\Mix(S) \subset \CC(S)$ denote the space of all mixed structures, and 
$\PMix(S)$ its image in $\PCC(S)$ under the projection $\CC(S) \to \PCC(S)$.
Observe that if 
$$
\eta \in \Mix(S)\setminus\ML(S)
$$ 
then $\I(\eta,\eta) = \pi/2$, just as in Proposition \ref{P:average}.

If $\alpha$ is a curve in $\partial X$, then $\I(\nu_q^\theta,
\alpha)=0$ and
$\I(\lambda, \alpha)=0$. Hence $\I(\eta, \alpha) = 0$, although $\alpha$ may
be contained in the support of $\lambda$ (and thus $\eta$).

\begin{boundary}
The closure of $\Flat(S)$ in $\PCC(S)$ is exactly the space $\PMix(S)$.
That is,
for any sequence $\{q_n\}$ in $\Flat(S)$, after passing to a subsequence
if necessary,
there exists a mixed structure $\eta$ and a sequence of positive real
numbers
$\{t_n\}$ so that
$$
\lim_{n \to \infty} t_n \ell_{q_n}(\alpha) = \I(\alpha,\eta).
$$
for every $\alpha \in \euC$.  Moreover, every mixed structure is a limit
of a sequence
in $\Flat(S)$.
\end{boundary}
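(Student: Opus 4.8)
The plan is to prove the two containments separately: first that every projective limit of flat structures is a mixed structure (giving $\overline{\Flat(S)} \subseteq \PMix(S)$ together with the displayed limit), and then that every mixed structure is realized as such a limit (giving $\PMix(S) \subseteq \overline{\Flat(S)}$).

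For the first containment, given $\{q_n\}$ I would use compactness of $\PCC(S)$ (Proposition~\ref{P:compact currents}) to choose scalars $s_n > 0$ and pass to a subsequence so that $s_n L_{q_n}$ converges in $\CC(S)$ to a nonzero current $L_\infty$. Proposition~\ref{P:uniform convergence} then supplies, after a further subsequence, a continuous map $f_\infty \from \RP^1 \to \ML(S)$ with $s_n \nu_{q_n}^\theta \to f_\infty(\theta)$ uniformly, and passing the Riemann-sum formula of Proposition~\ref{P:average}(2) to the limit gives $L_\infty = \tfrac12 \int_0^\pi f_\infty(\theta)\, d\theta$. Once $L_\infty$ is identified as a mixed structure $\eta$, the displayed limit is immediate: by bilinear continuity of $\I$ (Theorem~\ref{T:i cont}) and the identity $\I(L_{q_n},\alpha)=\ell_{q_n}(\alpha)$ from Theorem~\ref{T:current embedding}, one has $s_n \ell_{q_n}(\alpha) = \I(\alpha, s_n L_{q_n}) \to \I(\alpha,\eta)$, so $t_n = s_n$ works for every $\alpha \in \euC$.

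The heart of the first containment is the structural analysis of $f_\infty$. Using continuity of $\I$ together with the flat identity $\I(\nu_{q_n}^\theta, \nu_{q_n}^{\theta'}) = |\sin(\theta-\theta')|$ for unit-area structures (consistent with $\I(L_q,L_q)=\pi/2$), I would first compute $\I(f_\infty(\theta), f_\infty(\theta')) = c^2 |\sin(\theta-\theta')|$, where $c^2 = \lim s_n^2 \in [0,\infty)$, the limit being finite since $\I(L_\infty,L_\infty)$ is finite. If $c = 0$, then each $f_\infty(\theta)$ has zero self-intersection and distinct directions never cross, so $L_\infty \in \ML(S)$ and we are in the degenerate case $X = \emptyset$. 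If $c > 0$, distinct directions intersect positively, and I would argue that this sine pattern forces $f_\infty$ to be the foliation family of a flat structure on a subsurface together with a fixed lamination: split off the sublamination $\lambda$ common to all $f_\infty(\theta)$ (the minimal components whose support and measure are independent of $\theta$), show the remaining $\theta$-varying laminations fill a $\pi_1$-injective subsurface $X$, and reconstruct a flat structure $q \in \Flat(X)$ whose foliations $\nu_q^\theta$ account for $f_\infty(\theta)$ away from $\lambda$, via the standard correspondence between a jointly filling transverse pair of measured laminations and a quadratic differential. This reconstruction — verifying that the \emph{entire} continuous family $f_\infty(\theta)$, not merely two directions, is realized by a single flat structure on $X$ plus a $\theta$-independent $\lambda$ whose support can be homotoped off $X$ — is the main obstacle, and is where the structure theory of measured laminations does the work. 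It yields $L_\infty = \lambda + L_q = \eta \in \Mix(S)$, hence $[L_\infty] \in \PMix(S)$.

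For the reverse containment, given a mixed structure $\eta = (X,q,\lambda)$ I would construct flat structures $q_n \in \Flat(S)$ degenerating to it. On $X$ I keep the flat structure $q$ of area one, and on the complementary subsurface I build flat metrics — for instance unions of long thin Euclidean cylinders or train-track-carried pieces — whose averaged foliation currents approximate $\lambda$ while their total area tends to zero, glued to $X$ along $\partial X$ with boundary lengths tending to zero, so that the pinching curves acquire length $0$ in the limit. Approximating an arbitrary lamination $\lambda$ by such flat pieces, and choosing the relative scalings so that the $X$-part contributes $L_q$ and the collapsing part contributes exactly $\lambda$, is the technical content here; with the normalization $s_n \to 1$, continuity of $\I$ gives $s_n L_{q_n} \to \lambda + L_q = \eta$, completing the proof.
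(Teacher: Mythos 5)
Your reduction of the forward containment is sound up to exactly the point you yourself flag: the extraction of scalars via Proposition \ref{P:compact currents}, the appeal to Proposition \ref{P:uniform convergence}, the identity $\I(L_\infty,\alpha)=\frac{1}{2}\int_0^\pi\I(f_\infty(\theta),\alpha)\,d\theta$, the computation $\I(f_\infty(\theta),f_\infty(\theta'))=c^2|\sin(\theta-\theta')|$, and the degenerate case $c=0$ (where $\I(L_\infty,L_\infty)=0$ forces $L_\infty\in\ML(S)$) are all fine. But the case $c>0$ \emph{is} the theorem, and there your proposal contains no proof: you assert that the sine pattern forces $f_\infty$ to split as a $\theta$-independent lamination $\lambda$ plus the foliation family of a single flat structure on a subsurface $X$, and you explicitly defer this as ``the main obstacle.'' Nothing in your outline shows (i) that the supports of the $f_\infty(\theta)$ decompose into a $\theta$-independent part and a part filling one fixed subsurface, nor (ii) that after applying the Gardiner--Masur correspondence to two transverse members of the family, the resulting differential $q$ satisfies $\nu_q^\theta=f_\infty(\theta)$ away from $\lambda$ for \emph{all} $\theta$. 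Neither claim is standard, the intersection pattern alone is not known to imply them, and no tool developed in the paper delivers them; so the proposal restates the core difficulty rather than resolving it.

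It is worth seeing how the paper sidesteps this entirely: it never attempts to recognize a flat structure from the limiting current or its intersection pattern. Instead it sets $\euS_0=\{\alpha\in\euS \,:\, \ell_{q_n}(\alpha)\to 0\}$, lets $Z_0$ be the subsurface filled by $\euS_0$ (proper, by a binding-current argument), and for each component $W$ of $S\setminus Z_0$ runs a dichotomy on $\inf D(W)$: if $\inf D(W)>0$, then interior curves of $W$ have uniformly noncollapsed $q_n$-lengths while $\partial W$ pinches, so $q_n|_W$ converges \emph{geometrically} (via McMullen's geometric topology on quadratic differentials) to a flat structure on $W$ --- flatness of the limit comes from a limit of flat metrics, not from intersection numbers; if $\inf D(W)=0$, a limit of normalized curves gives a filling measured lamination on $W$. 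Your converse direction has the same character: the idea (keep $q$ on $X$, collapse flat pieces onto $\lambda$) matches the paper, but the mechanism that makes the collapsing part converge to the non-atomic piece $\lambda_1$ of $\lambda$ --- choose $q'$ with vertical foliation $\lambda_1$ and take $\frac{1}{n}A_nq'$, so that by Proposition \ref{P:affinepaths} the associated current tends to $\lambda_1$ while the area tends to zero, then slit along verticals and glue with bounded relative twisting --- is precisely the ``technical content'' you leave out; long thin cylinders handle only the atomic part $\lambda_0$. As written, both halves of the proposal defer the steps that constitute the proof.
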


\begin{proof}
Let $q_n$ be a sequence of quadratic differentials such that
$t_n L_{q_n} \to L_\infty$, for positive real numbers $t_n$.
We have to show that, up to scaling, $L_\infty \in \Mix(S)$.

If the sequence $t_n$ converges to zero, then
$$
\I(L_\infty, L_\infty) = \lim_{n \to \infty} t_n^2 \I(L_{q_n}, L_{q_n})
=
\frac {\pi}{2} \lim_{n \to \infty} t_n^2= 0.
$$
That is, $L_\infty$ is a measured lamination (c.f.~Bonahon \cite{bon-currents}). Thus the theorem holds with
$X=\emptyset$.

Since every geodesic current has finite self-intersection number, we can conclude that $t_n$ does not tend to infinity.
Therefore, after taking a subsequence, we can assume that the
sequence $t_n$ is convergent, and in fact converges to $1$. That is, there is a geodesic current
(which we again denote by $L_\infty$) such that $L_{q_n} \to L_\infty$
in $\CC(S)$. Applying Proposition \ref{P:uniform convergence}
and taking a further subsequence if necessary, we can also assume that
$f_n$ converges uniformly to a continuous map $f_\infty$.
As a consequence, for every curve $\alpha \in \euC$,
$$
\I(L_\infty, \alpha) = \frac 12 \int_0^\pi \I \big(f_\infty(\theta),
\alpha \big) \, d\theta.
$$

Define $\euS_0\subset \euS$ to be the set of simple closed curves
$\alpha$ for which $\ell_{q_n}(\alpha) \to 0$.
Equivalently, $\alpha\in\euS_0$ if and only if $\I(L_\infty,\alpha)=0$.
Let $Z_0$ be the subsurface of $S$ that is filled by $\euS_0$.
That is, up to isotopy, $Z_0$ is the largest $\pi_1$--injective subsurface 
$Z$ (with respect to containment)
having the property that every closed curve in $S$ which cuts $Z$ has positive intersection number with some 
curve in $\euS_0$.
If $Z_0=S$, then there is a finite set $\alpha_1,\ldots,\alpha_k$ of curves in $\euS_0$ such that
 $\sum \alpha_i$ is a binding current, and as $L_\infty$ lies in the span of $\ML(S) \subset \CC(S)$, we have
$$ \sum \I(L_\infty,\alpha_i) >0,$$
which is a contradiction. Therefore, $Z_0$ is a proper subsurface of $S$.

We observe that, for each $\alpha_0 \in \euS_0$,
$$
 \frac 12 \int_0^\pi \I (\alpha_0,f_\infty(\theta)) \, d\theta
 = \I(L_\infty, \alpha_0) = 0.
$$
Since $f_\infty$ is continuous, this implies that $\I (\alpha_0,
f_\infty(\theta))=0$
for every $\theta$. That is, for every $\theta \in \RP^1$, the support
of
$f_\infty(\theta)$ can be homotoped to be disjoint from $Z_0$. Hence, $\I(\alpha,
L_\infty)=0$ for every
essential curve in $Z_0$. However, the restriction of $L_\infty$ to
$Z_0$ may
not be zero; for an annular component $A$ of $Z_0$, the restriction of
$f_\infty(\theta)$ to $A$ may be a measured lamination that is
supported on
the core curve of $A$.

Now choose a component $W$ of $S \setminus Z_0$. Define
$$
D(W) = \left\{
\I\left( L_\infty,\frac{\alpha}{\ell_{q_0}(\alpha)} \right)
\, \Big| \, \alpha \in \euS(W) \right\}.
$$
Observe that $D(W)$ is bounded, since $\{ \frac{\alpha}{\ell_{q_0}(\alpha)} \}$ is precompact, being contained in the compact set
\[ \{ \lambda \in \ML(S) \, | \, \ell_{q_0}(\lambda) = 1\}. \]
We argue in two cases.

\subsection*{Case 1:} $\inf ( D(W)) >0$.

In this case, we have a uniform lower bound for the $q_n$--length of any nonperipheral simple closed curve, and hence also any nonperipheral closed curve in $W$.  Since $W$ is a component of $S \setminus Z_0$, the $q_n$--lengths of the boundary curves of $W$ go to zero.
Therefore, after choosing a basepoint in $W$ (away from the boundary) and passing to a subsequence, we can assume that $q_n|_W$ converges to a flat structure on $W$ \textit{geometrically}, that is, after re-marking by a homeomorphism. (See Appendix A of \cite{McMullen} for a thorough
discussion of the geometric topology on the space of quadratic differentials. In particular, McMullen
establishes the existence of the relevant geometric limit in his Theorem A.3.1 for points in moduli space.)
Since any given curve in $W$ has a uniform upper bound to its $q_n$--length, we may assume that the 
re-marking homeomorphisms are isotopic to the identity in $W$, and hence $q_n|_W$ converges to a flat structure on $W$ (though not necessarily of unit area).

\subsection*{Case 2:} $\inf( D(W) )=0$.

In this case, we have a sequence of simple curves $\alpha_n \in \euC(W)$
such that
$$\lim_{n\to\infty}\I\left(L_\infty,\frac{\alpha_n}{\ell_{q_0}(\alpha_n)}\right)=0.$$
Since $\{ \frac{\alpha_n}{\ell_{q_0}(\alpha_n)} \}$ is precompact, we may pass to a subsequence
so that
$$\frac{\alpha_n}{\ell_{q_0}(\alpha_n)} \to \lambda,$$
for some lamination $\lambda$. The continuity of intersection number
implies
$\I(L_\infty, \lambda) =0$.

We observe that $\lambda$ has to fill $W$.  To see this, let $W' \subset W$ be the subsurface filled by $\lambda$.  Since $\I(L_\infty,\lambda)=0$, it follows that $\I(f_\infty(\theta),\lambda) = 0$.  Therefore, $\I(f_\infty(\theta),\partial W') = 0$ and hence $\I(L_\infty,\partial W') = 0$.  Thus $\partial W' \in \euS_0$ and $W = W'$.
The support of $L_\infty$ consists of geodesics having no transverse
intersection with the support of $\lambda$. Therefore, the support of
$L_\infty$, restricted to $W$, equals the support of $\lambda$.
That is, $L_\infty|_{W}$ is a (filling) measured lamination in $W$.

We have shown that $L_\infty$ is a mixed structure $(X,q,\lambda)$ where $X$ is the union of all 
$W$ as in Case 1,
$q$ is the limiting flat structure in $X$ and $\lambda$ is the union of limiting
laminations in Case 2 and weighted curves from all the annular components
$A$ where the restriction of some $f_\infty(\theta)$ to $A$ is
nontrivial.  Since 
$$
\I(L_\infty,L_\infty) = \lim_{n \to \infty} \I(L_{q_n},L_{q_n}) = \pi/2,
$$ 
the sum of the areas of the flat structures is $1$.

To finish the proof, we show that any mixed structure $\eta=(X,q,\lambda)$ appears as the limit of 
a sequence of flat structures.  The idea is to build the metric from $q$ on $X$, by making 
small slits at the punctures and gluing in a sequence of metrics on the complement,
limiting to $\lambda$ and with area tending to zero.

First lift $q$ to an arbitrary representative $q\in\QQ(X)$.  Next
write the lamination $\lambda$ as $\lambda = \lambda_0 + \lambda_1$, where 
$\lambda_0$ is supported on a disjoint union of simple closed curves, and 
$\lambda_1$ has support a 
lamination with no closed leaves.  We can further decompose $\lambda_0 = \sum_i s_i \alpha_i$ 
for some $\alpha_i \in \euS$ and $s_i > 0$.  For each $i$ and all $n \geq 0$, let $C_{i,n}$ be a 
Euclidean cylinder with height $s_i$ and circumference $2/n^2$.
Let $Y$ be the subsurface filled by $\lambda_1$ (with boundary replaced by punctures) and let 
$q' \in \QQ(Y)$ be any quadratic differential for which $\nu_{q'}^0 = \lambda_1$.  Consider the 
Teichm\"uller deformation $A_n q'$, where
\[A_n = \left( \begin{array}{cc} n & 0\\ 0 &\frac{1}{n}\\ \end{array} \right).\]
This tends to the vertical foliation of $q'$ (which was chosen to be $\lambda_1$)
by an argument which appears in Proposition~\ref{P:affinepaths}.

Let $Z$ be the union of the nonannular, non-pants components of 
$S \setminus (X \cup Y)$.  Choose any quadratic differential $q'' \in \QQ(Z)$ for which 
the vertical foliation is minimal (for simplicity).

Now we construct a flat structure $q_n$ as follows.  At each puncture in $q$ that corresponds to an essential curve in $S$ (that is, a boundary component of $X$ in 
$S$) we cut open a slit of size $1/n^2$ emanating from the given puncture, in any 
direction. Similarly, letting 
$$
q'(n) = \frac{1}{n}A_n q' \qquad\text{and}\qquad q''(n) = \frac{1}{n} q'',
$$ 
cut open slits of length $1/n^2$ along the vertical foliations of each, one starting at 
each of the punctures of $Y$ and $Z$ that correspond to essential curves in $S$.
Note that since the vertical foliations of $q'(n)$ and $q''(n)$ are minimal, these constructions are possible.
We glue these and the cylinders $\{C_{i,n}\}$ along their boundaries to recover the surface $S$ with a quadratic differential  $q_n$, which we scale to have 
unit norm (as $n$ tends to infinity, the areas of $q'(n)$ and $q''(n)$ go to zero and
the scaling factor tends to $1$). We glue along the boundaries by a local isometry, 
and if we further require the relative twisting of $q_0$ and $q_n$ along every gluing 
curve to be uniformly bounded, we obtain a sequence limiting to 
$\eta$ in $\PCC(S)$, as desired.
\end{proof}

\subsection*{A dimension count}
The Thurston boundary is very nice as a topological space:  it is 
a sphere compactifying a ball, having  codimension one in the compactification
$\overline{\T(S)}$.  Here, we show
that the codimension of $\partial\Flat(S)$ is three.
To see this, first recall that for a connected surface
$S$ of genus $g$
with $n$ punctures, $\T(S)$ is $(6g+2n-6)$--dimensional.
The space $\calQ(S)$ of quadratic differentials on $S$ has twice the
dimension and $\Flat(S)$ is a quotient of
$\calQ(S)$ by an action of $\C$. Hence
$$ \dim(\Flat(S))= 12g+4n-14.$$

For any $\pi_1$--injective subsurface $Y \subset S$, we consider the subset $\partial_Y \subset \partial \Flat(S)$
consisting of those $\eta = (X,q,\lambda)$ for which the support of the flat metric is
$X = S \setminus Y$.  Observe that $\partial \Flat(S)$ is a disjoint union of subsets of the form $\partial_Y$, as $Y$ varies over subsurfaces of $S$.  In the case that $Y$ is an annulus with core curve $\alpha$, we simply write $\partial_Y = \partial_\alpha$.  Points in $\partial_\alpha$ are projective mixed structures of the form $w \alpha + L_q$, where $q \in \Flat(X)$ and the weights $w$ on $\alpha$ are nonnegative numbers.
We first compute the dimension of the sets $\partial_\alpha$.

If $\alpha$ is a non-separating curve, then $X$ is connected, has
genus one less than $S$ and has $2$ extra punctures. That is,
$$\dim(\Flat(X)) = 12(g-1) + 4(n+2) -14 = 12g+4n-18.$$
To recover the space $\partial_\alpha$, we restore one extra dimension
from the weight on $\alpha$, so that $\dim(\partial_\alpha)=12g+4n-17$, giving that space 
codimension three with respect to $\Flat(S)$.

Now let $\alpha$ be a separating curve. Then $X=X_1 \cup X_2$, where
$X_i$
is a surface of genus $g_i$ with $n_i$ punctures ($i=1,2$) so
that
$g=g_1+g_2$ and $n=n_1+n_2+2$. Therefore, $\calQ(X)$ has dimension
$$(12g_1+4n_1-12)+(12g_2+4n_2-12)=12g+4(n+2)-24=12g+4n-16.$$
The space $\Flat(X)$ is the quotient of $\calQ(X)$
by scaling and rotation in each component, but the total area must be one in the end,
giving
$$\dim(\Flat(X))= \dim(\calQ(X))-3=12g+4n-19.$$
The space $\partial_\alpha$ has one extra dimension from the weight on $\alpha$ and is
$(12g+4n-18)$--dimensional.  In the separating case, then, the codimension
is four with respect to $\Flat(S)$.

It is not difficult to see that for larger-complexity subsurfaces $Y \subset S$, the subsets 
$\partial_Y$ have higher codimension in $\Flat(S)$, since for any subsurface $W$, 
$$\dim\ML(W) < \dim\Flat(W).$$
Since $\partial \Flat(S)$ is a countable union of sets of the form $\partial_Y$, each of which can 
be exhausted by compact (hence closed) sets, the dimension of $\partial \Flat(S)$ is the 
maximum dimension of any subset $\partial_Y$ (by the Sum Theorem in \cite{nagata}), 
which is therefore $12g+4n-16$.  
So we have seen that $\partial \Flat(S)$ has codimension three in 
$\overline{\Flat(S)}$.

\section{Remarks and questions} \label{S:Finalremarks}


\subsection{Rigidity for closed curves}

Though we have a complete description of rigidity for
$\Sigma\subset\euS$,
the more general case of $\Sigma\subset\euC$ is still open.

We have already seen a sufficient condition for $\Sigma\subset\euC$ to be
spectrally rigid over flat metrics:  clearly if 
$\PMF\subset\calP(\overline\Sigma)$, then $\Sigma$ is spectrally
rigid because its lengths determine all those from $\euS$ in that case.
Here is a further observation.

\begin{proposition}
If $\overline\Sigma$ has nonempty interior as a subset of $\CC(S)$, then
$\Sigma$ is spectrally rigid over any class of metrics that embeds
naturally into $\CC(S)$.
\end{proposition}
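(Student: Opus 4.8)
The plan is to reformulate the hypothesis in the language of currents and then exploit the bilinearity of the intersection form. By a class of metrics $\calG$ that \emph{embeds naturally} into $\CC(S)$ I mean there is an injection $\rho \mapsto L_\rho$ satisfying $\I(L_\rho,\alpha) = \ell_\rho(\alpha)$ for all $\alpha \in \euC'(S)$, exactly as for flat metrics in Theorem \ref{T:current embedding} and for the hyperbolic and negatively curved cases discussed above. Suppose $\rho,\rho' \in \calG$ satisfy $\ell_\rho(\alpha) = \ell_{\rho'}(\alpha)$ for every $\alpha \in \Sigma$; the goal is to deduce $\rho = \rho'$. Introduce
\[ g(\mu) = \I(L_\rho,\mu) - \I(L_{\rho'},\mu), \]
a function on $\CC(S)$. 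By naturality, the hypothesis says precisely that $g$ vanishes on $\Sigma \subset \CC(S)$.

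First I would note that $g$ is continuous, being a difference of the continuous functions $\I(L_\rho,\cdot)$ and $\I(L_{\rho'},\cdot)$ furnished by Theorem \ref{T:i cont}. Hence $g$ vanishes on the closure $\overline\Sigma$, and by hypothesis the interior $U = \mathrm{int}(\overline\Sigma)$ is a nonempty open subset of $\CC(S)$ on which $g \equiv 0$.

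The key step is to propagate the vanishing of $g$ from the open set $U$ to all of $\CC(S)$, using that $g$ is additive and homogeneous in its argument. Bilinearity of $\I$ gives $g(\mu_1+\mu_2) = g(\mu_1)+g(\mu_2)$ and $g(t\mu) = t\,g(\mu)$ for $t>0$, where $\mu_1+\mu_2$ and $t\mu$ are again geodesic currents, as sums and positive multiples of $\pi_1(S)$--invariant Radon measures. Fix an interior point $\mu_0 \in U$ and let $\nu \in \CC(S)$ be arbitrary. Since $U$ is open and $\mu_0+t\nu \to \mu_0$ as $t\to 0^+$, we have $\mu_0 + t\nu \in U$ for all sufficiently small $t>0$; thus $0 = g(\mu_0 + t\nu) = g(\mu_0) + t\,g(\nu) = t\,g(\nu)$, forcing $g(\nu)=0$. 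Therefore $g \equiv 0$ on $\CC(S)$; in particular $\I(L_\rho,\alpha) = \I(L_{\rho'},\alpha)$ for every $\alpha \in \euC'(S)$.

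Finally I would invoke Otal's separation theorem (Theorem \ref{T:otal}, or Theorem \ref{T:like otal} in the punctured case) to conclude $L_\rho = L_{\rho'}$, and then the injectivity built into the word \emph{embeds} to conclude $\rho = \rho'$. The only real content lies in the third paragraph: once one recognizes that equality of intersection numbers on an open set, combined with homogeneity and additivity of $\I$, spreads to equality on the whole cone $\CC(S)$, the rest is formal. I expect the main (and only minor) obstacle to be the bookkeeping that $\mu_0 + t\nu$ remains in $U$ for small $t$ and that additivity of $\I$ applies to such sums of currents, both of which follow at once from the openness of $U$ and the bilinear extension of the intersection form.
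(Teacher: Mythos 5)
Your proof is correct and follows essentially the same route as the paper's: the paper also forms the difference $f(\mu)=\I(\nu_1,\mu)-\I(\nu_2,\mu)$, uses openness of a subset of $f^{-1}(0)$ together with bilinearity of $\I$ to propagate vanishing from a neighborhood of a point $\mu_0$ to all of $\CC(S)$ (perturbing by small currents $\delta$ and noting every current is a multiple of a small one, which is your $\mu_0+t\nu$ step in slightly different packaging), and then concludes via Otal's theorem and injectivity of the embedding. No substantive difference.
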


\begin{proof}
Fix a pair of currents $\nu_1$,$\nu_2$ and set 
$$
f(\mu):=\I(\nu_1,\mu)-\I(\nu_2,\mu).
$$
Suppose there is an open set in $f^{-1}(0)$ containing a current $\mu_0$.
Let $\{\delta\}$ be the set of currents close to the identity in the metric
on $\CC(S)$ (defined in Theorems~\ref{T:metric},\ref{T:metric2}).
Then for $\delta$ sufficiently close to the zero current,
$f(\mu+\delta)=0$, so $f(\delta)=0$ by linearity of $\I$.
But every current is a multiple of a small current and $f$ is linear, so this shows that
$\nu_1$ and $\nu_2$ have the same intersection number with all of the elements of $\CC(S)$.
We can conclude that  $\nu_1=\nu_2$ by Otal's theorem.
In fact, we have shown that intersections with any open set of currents suffice to separate points in $\CC(S)$.

To apply this to a class of metrics such that $\calG(S)\hookrightarrow \CC(S)$
and $\I(L_\rho,\alpha)=\ell_\rho(\alpha)$,
suppose that $\lambda_\Sigma(\rho)=\lambda_\Sigma(\rho')$.
Letting 
$$
\nu_1=L_\rho \qquad\text{and}\qquad \nu_2=L_{\rho'},
$$ 
we have $f(\mu)=0$ for all $\mu\in\overline\Sigma$, which contains an open set 
by assumption.  This then implies that $\rho=\rho'$.\end{proof}


\subsection{Remarks on the boundary of $\Flat(S)$}

\begin{remark}
We observe that \Teich geodesics behave well with respect
to the compactification of $\Flat(S)$.  For all the points along a \Teich
geodesic, the vertical and horizontal foliations are constant, up to scaling.
In this compactification, every geodesic limits to its vertical foliation.

\begin{proposition} \label{P:affinepaths}
Let $\G \from \R \to \T(S)$ be a \Teich geodesic, let $q_t$ be
the corresponding quadratic differential at time $t$ and
$\nu_0$ be the initial vertical foliation at $q_0$. Then,
considering $\nu_0$ as an element of $\CC(S)$, we have
$$
\frac{L_{q_t}}{e^t} \to \nu_0.
$$
\end{proposition}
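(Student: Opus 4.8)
The plan is to reduce the claimed convergence of currents to a statement purely about lengths of curves, using the convergence criterion of Theorem~\ref{T:metric} (for closed $S$) and Theorem~\ref{T:metric2} (for punctured $S$). Since $\I(L_{q_t}/e^t,\alpha)=\ell_{q_t}(\alpha)/e^t$ by Proposition~\ref{P:average}(1) and bilinearity of $\I$, it suffices to prove that
\[ \lim_{t\to\infty}\frac{\ell_{q_t}(\alpha)}{e^t} = \I(\nu_0,\alpha) \qquad\text{for every } \alpha\in\euC'(S). \]
Writing $q_t = A_t.q_0$ with $A_t=\matrix{e^t}00{e^{-t}}$, in natural coordinates $\zeta=x+iy$ for $q_0$ we have $\sqrt{q_t}=e^t\,dx + i\,e^{-t}\,dy$, so that $\Real(e^{i\theta}\sqrt{q_t})=e^t\cos\theta\,dx - e^{-t}\sin\theta\,dy$. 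In particular the vertical foliation of $q_t$ satisfies $\nu_{q_t}^0 = e^t\nu_0$ as currents, which will give the lower bound immediately.

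For the lower bound I would use the elementary inequality $\I(\nu_q^\theta,\alpha)\le\ell_q(\alpha)$, valid for every $q$ and every $\theta$: expressing $\I(\nu_q^\theta,\alpha)=\sum_j r_j|\cos(\theta+\theta_j)|$ along the saddle connections of the $q$--geodesic representative, as in the proof of Lemma~\ref{L:length integral}, and bounding each $|\cos(\theta+\theta_j)|\le1$. Applying this to $q=q_t$ at $\theta=0$ and using $\nu_{q_t}^0=e^t\nu_0$ yields $e^t\,\I(\nu_0,\alpha)\le\ell_{q_t}(\alpha)$, so $\ell_{q_t}(\alpha)/e^t\ge\I(\nu_0,\alpha)$ for all $t$.

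The matching upper bound is where the main work lies, and it is the step I expect to be the principal obstacle, because the $q_t$--geodesic representative of $\alpha$ moves with $t$ and so cannot be used directly. Instead I would fix once and for all the $q_0$--geodesic representative $\alpha_{q_0}=\omega_1\cdots\omega_k$, where $\omega_j$ is a saddle connection of $q_0$--length $r_j$ making angle $\phi_j$ with the horizontal. Since $\I(\nu_{q_t}^\theta,\alpha)$ is an infimum over representatives, it is bounded above by the total $\nu_{q_t}^\theta$--measure carried by this fixed representative; as each $\omega_j$ is straight in $q_0$, the form $\Real(e^{i\theta}\sqrt{q_t})$ has constant sign along $\omega_j$, giving
\[ \I(\nu_{q_t}^\theta,\alpha)\le\sum_{j=1}^k r_j\,\bigl|e^t\cos\theta\cos\phi_j - e^{-t}\sin\theta\sin\phi_j\bigr|. \]
Dividing by $e^t$, integrating over $\theta\in[0,\pi]$, and using Lemma~\ref{L:length integral} to recognize the left-hand side as $\ell_{q_t}(\alpha)/e^t$, I would then let $t\to\infty$. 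The integrand converges pointwise (off the single angle $\theta=\pi/2$) to $|\cos\theta|\,|\cos\phi_j|$ and is bounded by $1$, so dominated convergence gives
\[ \limsup_{t\to\infty}\frac{\ell_{q_t}(\alpha)}{e^t}\le\frac12\sum_{j=1}^k r_j|\cos\phi_j|\int_0^\pi|\cos\theta|\,d\theta = \sum_{j=1}^k r_j|\cos\phi_j| = \I(\nu_0,\alpha), \]
the last equality holding because $r_j|\cos\phi_j|=|\Delta x_j|$ is exactly the $\nu_0=|dx|$--measure of $\omega_j$. Combining this with the lower bound establishes the limit for every $\alpha\in\euC'(S)$, and Theorems~\ref{T:metric} and~\ref{T:metric2} then upgrade it to $L_{q_t}/e^t\to\nu_0$ in $\CC(S)$, as desired.
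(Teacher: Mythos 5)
Your proof is correct, and it shares the paper's overall skeleton --- reduce to showing $\ell_{q_t}(\alpha)/e^t \to \I(\nu_0,\alpha)$ for every $\alpha\in\euC'(S)$, then upgrade to convergence in $\CC(S)$ --- but your upper bound is executed by a genuinely different mechanism. The paper's proof is a two-line sandwich: writing $\nu_t,\mu_t$ for the vertical and horizontal foliations of $q_t$, it uses
\[
\max\big(\I(\alpha,\nu_t),\I(\alpha,\mu_t)\big)\le \ell_{q_t}(\alpha)\le \I(\alpha,\nu_t)+\I(\alpha,\mu_t),
\]
together with the scalings $\I(\alpha,\nu_t)=e^t\I(\alpha,\nu_0)$ and $\I(\alpha,\mu_t)=e^{-t}\I(\alpha,\mu_0)$, so that after dividing by $e^t$ the error term $e^{-2t}\I(\alpha,\mu_0)$ vanishes. (The paper actually writes ``minimum'' where ``maximum'' is what makes the lower bound work --- a slip your argument avoids, since your lower bound $e^t\I(\nu_0,\alpha)=\I(\nu_{q_t}^0,\alpha)\le\ell_{q_t}(\alpha)$ is exactly the max inequality in the one direction needed.) For the upper bound, the paper's estimate implicitly tracks the moving $q_t$--geodesic representative and uses that it simultaneously realizes both the horizontal and vertical intersection numbers for every $t$; you instead keep the fixed $q_0$--geodesic as a competitor, bound $\I(\nu_{q_t}^\theta,\alpha)$ by its transverse measure along that representative, recover $\ell_{q_t}(\alpha)$ by integrating over $\theta$ via Lemma \ref{L:length integral}, and finish with dominated convergence. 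What each buys: the paper's argument is shorter; yours never has to discuss how geodesic representatives vary with $t$ and invokes the realization-by-geodesics identity only at $q_0$ (the same identity already embedded in the proof of Lemma \ref{L:length integral}), at the cost of an extra integration and a limit interchange. Your closing citation is also the sharper one: the convergence criterion you need is the first part of Theorems \ref{T:metric} and \ref{T:metric2}, whereas the paper points to Theorems \ref{T:otal} and \ref{T:like otal}, which by themselves give uniqueness of a limit rather than convergence.
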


\begin{proof}
The flat length of a curve is less than the sum of its horizontal length
and its vertical length and is larger than the minimum of its
horizontal and vertical lengths. That is, if $\mu_t$ and $\nu_t$ are the horizontal and the
vertical foliation at $q_t$ then for every $\alpha \in \euC'(S)$ we have
$$
\min \big(\I(\alpha, \nu_t), \I(\alpha, \mu_t)\big) \leq \ell_{q_t}(\alpha)
\leq \I(\alpha, \nu_t) +\I(\alpha, \mu_t).
$$
But $\I(\alpha, \nu_t) = e^t \I(\alpha, \nu_0)$ and
$\I(\alpha, \mu_t) = e^{-t} \I(\alpha, \mu_0)$. Therefore,
$$
\frac{\I(L_{q_t}, \alpha)}{e^t} = \frac{\ell_{q_t}(\alpha)}{e^t} \to
\I(\alpha, \nu_0).
$$
Theorems~\ref{T:otal} and \ref{T:like otal} assure us that a current is completely determined by these intersections.
\end{proof}

This proposition shows not only that points along a \Teich geodesic converge
to a unique limit in $\partial \Flat(S)=\PMix(S)$, but also that
different geodesic rays with a common basepoint have different limit points in the boundary
(because they have different vertical foliations). This is in contrast with the
situation
for the Thurston boundary where both of the above statements are false
(see \cite{lenzhen} and \cite{masur:CG}).
\end{remark}

\begin{remark}
The boundary of $\Flat(S)$ described here and the Thurston boundary of \Teich 
space are compatible in a certain sense. Consider the projection 
$$\sigma:\Flat(S)\to \T(S)$$ 
which sends a flat metric $q$ to the hyperbolic metric in its conformal class.
As flat structures degenerate to the boundary, the corresponding hyperbolic metrics accumulate in $\PML$.
The following proposition describes the relationship between the limiting structures:
they have zero intersection number.  The results on \Teich geodesics in the previous
remark illustrate a special case of this.

\begin{proposition}
Let $q_n$ be a sequence of flat structures on $S$ and $\sigma_n=\sigma(q_n)$.
Assume that $\sigma_n \to \mu$ in the Thurston compactification and $q_n \to \eta$ in $\PCC(S)$, where $\mu$ is a geodesic lamination and $\eta$ is a mixed structure in 
$\partial\Flat(S)$. Then 
$$\I(\mu, \eta)=0.$$
\end{proposition}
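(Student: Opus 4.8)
The plan is to realize both degenerations as honest (non-projective) limits of currents and then exploit bilinearity and continuity of the intersection form. First I would choose positive scalars $t_n$ with $t_n L_{q_n} \to \eta$ in $\CC(S)$ (these exist by Theorem~\ref{T:boundary}, and $t_n$ stays bounded since $t_n^2 \cdot \tfrac{\pi}{2} = \I(t_nL_{q_n},t_nL_{q_n}) \to \I(\eta,\eta) \le \tfrac{\pi}{2}$), and scalars $r_n>0$ with $r_n L_{\sigma_n}\to\mu$ in $\CC(S)$, as provided by Bonahon's identification of the Thurston compactification with the closure of $\T(S)$ in $\PCC(S)$ (here one uses $\I(L_{\sigma_n},\alpha)=\ell_{\sigma_n}(\alpha)$ together with Theorems~\ref{T:metric} and~\ref{T:metric2}). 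Because $\sigma_n$ leaves every compact subset of $\T(S)$, some curve's hyperbolic length blows up, forcing $r_n\to 0$. Since $\I$ is continuous on $\CC(S)\times\CC(S)$ (Theorem~\ref{T:i cont}) and bilinear, I can pass to the diagonal:
\[
\I(\mu,\eta) = \lim_{n\to\infty}\I\bigl(r_n L_{\sigma_n},\, t_n L_{q_n}\bigr) = \lim_{n\to\infty} r_n t_n\,\I\bigl(L_{\sigma_n},L_{q_n}\bigr).
\]
As $r_n t_n \to 0$, it then suffices to show that $\I(L_{\sigma_n},L_{q_n})$ stays bounded as $n\to\infty$.

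For this I would rewrite the intersection using the averaging description of $L_{q_n}$ from Proposition~\ref{P:average}(2):
\[
\I(L_{\sigma_n},L_{q_n}) = \frac12\int_0^\pi \I\bigl(\nu_{q_n}^\theta, L_{\sigma_n}\bigr)\, d\theta = \frac12\int_0^\pi \ell_{\sigma_n}\bigl(\nu_{q_n}^\theta\bigr)\, d\theta,
\]
so the problem reduces to a \emph{uniform} (in $n$ and $\theta$) bound on the hyperbolic length of the flat foliation $\nu_{q_n}^\theta$. This is the heart of the matter, and I would obtain it through extremal length. Writing $X_n$ for the conformal class underlying $q_n$ (equivalently $\sigma_n$), the foliation $\nu_{q_n}^\theta$ is exactly the vertical foliation of the unit-area quadratic differential $e^{2i\theta}q_n$ on $X_n$; hence by the Hubbard--Masur/Gardiner computation of extremal length as the norm of the associated differential \cite{Gardiner-Masur},
\[
\Ext_{X_n}\bigl(\nu_{q_n}^\theta\bigr) = \|e^{2i\theta}q_n\| = 1
\]
for every $\theta$ and every $n$. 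Comparing against the candidate metric $\sigma_n$, whose hyperbolic area $A$ is a fixed constant, gives $\ell_{\sigma_n}(\nu_{q_n}^\theta)^2 \le A\cdot\Ext_{X_n}(\nu_{q_n}^\theta) = A$.

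Combining these, $\I(L_{\sigma_n},L_{q_n}) \le \tfrac{\pi}{2}\sqrt{A}$ uniformly in $n$, and since $r_n t_n\to 0$ the limit in the first display vanishes, proving $\I(\mu,\eta)=0$. The main obstacle is precisely this uniform length bound: a priori a flat foliation could be enormously long in the hyperbolic metric, and it is the conformal naturality of extremal length --- the identity $\Ext_{X_n}(\nu_{q_n}^\theta)=1$ holding \emph{simultaneously for all directions} $\theta$ --- that rescues the estimate. Two bookkeeping points deserve care, and I would address them explicitly: ensuring the limits hold at the level of $\CC(S)$ rather than only projectively (so that joint continuity of $\I$ applies), and verifying that the hyperbolic length function extends continuously to the measured foliations $\nu_{q_n}^\theta$ and that the candidate inequality $\ell_\rho(\cdot)^2 \le \area(\rho)\,\Ext(\cdot)$ is valid for measured foliations on the finite-area punctured surface, not merely for simple closed curves.
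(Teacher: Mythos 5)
Your core estimate is exactly the paper's, and in fact the authors remark immediately after their own proof that your route --- bounding $\I(L_{\sigma_n},L_{q_n})\le\tfrac{\pi}{2}\sqrt{A}$ by extremal length and letting the product of scalars tend to zero --- gives ``an even simpler proof \dots for the case of closed surfaces.'' The gap is precisely the restriction to closed surfaces. Your argument requires the Liouville currents $L_{\sigma_n}$ of the hyperbolic metrics and Bonahon's theorem that the closure of $\T(S)$ in $\PCC(S)$ is the Thurston compactification, and both are closed-surface facts. In this paper, punctured surfaces are handled by replacing punctures with holes: a current is a $\pi_1(S)$--invariant Radon measure on $G(\hull(\Lambda))$ for a convex cocompact uniformization, i.e.\ it is supported on geodesics that stay in $\core(S)$. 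But $\sigma_n=\sigma(q_n)$ is the complete finite-area metric with cusps; its limit set is the whole circle, geodesics entering the cusps have no counterpart in $G(\hull(\Lambda))$, and the paper constructs no embedding $\T(S)\to\CC(S)$ for punctured $S$ (it cites Bonahon's identification of the compactification only ``for closed surfaces''). So the steps ``choose $r_n>0$ with $r_nL_{\sigma_n}\to\mu$ in $\CC(S)$'' and ``$\I(L_{\sigma_n},\alpha)=\ell_{\sigma_n}(\alpha)$'' are unjustified in exactly the punctured case, which the proposition is meant to cover; Theorem~\ref{T:metric2} cannot substitute for the missing current.

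The paper's proof sidesteps this by never forming a current from $\sigma_n$: it takes the approximating laminations $\mu_n$ of \cite[Expos\'e 8]{FLP}, which satisfy $\I(\mu_n,\nu)\le\ell_{\sigma_n}(\nu)$ for all $\nu\in\ML(S)$ and $t_n\mu_n\to\mu$ in $\ML(S)$ whenever $t_n\ell_{\sigma_n}(\cdot)\to\I(\mu,\cdot)$. Laminations are currents on any finite-type surface, so with $s_nL_{q_n}\to\eta$ one gets
\[
\I(\mu,\eta)=\lim_{n\to\infty}\I\bigl(t_n\mu_n,\,s_nL_{q_n}\bigr)
\le\lim_{n\to\infty}\frac{t_ns_n}{2}\int_0^\pi \ell_{\sigma_n}\bigl(\nu_{q_n}^\theta\bigr)\,d\theta,
\]
and your extremal-length bound $\ell_{\sigma_n}(\nu_{q_n}^\theta)\le\sqrt{A\cdot\Ext_{[\sigma_n]}(\nu_{q_n}^\theta)}=\sqrt{A}$ (valid on all of $\ML(S)$ by Kerckhoff's continuity results \cite{Kerckhoff}) finishes the proof since $t_ns_n\to 0$. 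Note that only the one-sided inequality $\I(\mu_n,\cdot)\le\ell_{\sigma_n}(\cdot)$ is needed, never the exact identity your Liouville currents would provide. Your write-up is therefore correct and complete for closed $S$, and it becomes a proof for all finite-type $S$ once you replace $r_nL_{\sigma_n}$ by the FLP approximating laminations $t_n\mu_n$.
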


\begin{proof}
We suppose that $s_n q_n \to \eta$ as currents and $t_n \ell_{\sigma_n}(\nu) \to \I(\mu,\nu)$ for all $\nu \in \ML(S)$.
Since the $\sigma_n$ and $q_n$ escape from $\T$ and $\Flat(S)$, respectively, we know that the $t_n$ tend
to zero and the $s_n$ are bounded.
There is a sequence of approximating laminations $\mu_n$ to $\sigma_n$ such that $t_n\mu_n\to \mu$ in $\ML(S)$ and $\I(\mu_n,\nu) \le \ell_{\sigma_n}(\nu)$ for all $\nu \in \ML(S)$; see \cite[Expos\'e 8]{FLP}. Then we have
\begin{align*}
\I(\mu,\eta) 
&=  \limni \I(t_n\mu_n,s_n L_{q_n}) \\ & \\
&=  \limni \frac12 \int_0^\pi \I\left(t_n\mu_n,s_n\nu_{q_n}^\theta \right) \, d\theta\\ & \\
&\le \limni \frac12 \int_0^\pi t_n \ell_{\sigma_n}\left(s_n\nu_{q_n}^\theta \right)\, d\theta.
\end{align*}

We also have that $\ell_{\sigma_n}(\nu_{q_n}^\theta)$ is bounded above by $\sqrt{A\cdot \Ext_{[\sigma_n]}(\nu_{q_n}^\theta)}$,
where $A$ is the $\sigma_n$-area of $S$, which is a constant. This is true for simple closed curves by definition of extremal length, and holds for laminations because both hyperbolic length and extremal length extend continuously to $\ML(S) = \MF(S)$; see \cite{Kerckhoff}.
Furthermore, extremal length of $\nu_{q_n}^\theta$ is realized in the quadratic differential metric
for which the foliation is straight, namely $q_n$.   Finally, since $\ell_{q_n}(\nu_{q_n}^\theta)=1$
and since the product $t_n s_n$ tends to zero, we conclude that $\I(\mu,\eta)=0$,
as desired.\end{proof}

Note that if $\rho$ is any metric in the conformal class of $q$ to which a current $L_\rho$ can be naturally associated, the extremal length argument gives us that
$\I(L_\rho,L_q) \le \frac{\pi}{2} \sqrt A$, for $A$ the $\rho$-area of the surface.  This gives an even simpler proof of the previous theorem 
for the case of closed surfaces $S$ by taking $\rho$ to be the hyperbolic metric in the conformal class of $q$.
Furthermore, we also have the following interesting inequality:
\[ \I(L_q,L_{q'})\le 1 \]
where $q$ and $q'$ are any two flat metrics in the same fiber over $\T(S)$.

\end{remark}

\begin{remark}
The boundary for $\Flat(S)$ can be used to
construct a boundary for $\QQ(S)$. We have shown that, for a
sequence $q_n$ of quadratic differentials, after taking a subsequence,
not only $L_{q_n}$ converge in $\CC(S)$, but by Proposition \ref{P:uniform convergence} 
the maps $f_n(\theta)$
converge uniformly to a map $f_\infty$, after appropriate scaling. One can equip the space
$$
\Big\{ (\mu,f) \, \Big|\,  \mu \in \CC(S), \quad f\from \RP^1 \to \ML(S)
\quad \text{continuous} \Big\}
$$
with the product topology, from $\CC(S)$ in one factor and uniform convergence in the other.
Then the map $q_n \mapsto (L_{q_n}, f_n)$ is an embedding and has
compact
closure in the projectivization. However, it seems difficult to describe which pairs
$(\mu, f)$ appear in the boundary of $\QQ(S)$.
\end{remark}

\subsection{Unmarked length spectrum does not suffice}
\label{sunada}

The Sunada construction of distinct isospectral hyperbolic
surfaces, originally put forward in ~\cite{sunada},
is easily applied to metrics in $\Flat(S)$.  We briefly sketch the idea.

Sunada constructs non-isometric hyperbolic surfaces
$S_1$,$S_2$ covering
a common $S$  by choosing ``almost-conjugate'' subgroups 
$\Gamma_1,\Gamma_2$ of $\pi_1(S)$ and lifting to corresponding covers.  
(Almost-conjugacy means that each conjugacy class of $\pi_1(S)$ intersects
the two subgroups in the same number of elements.)
If a flat metric $q$ is placed on $S$, then the
argument that its lifts $q_1,q_2$ are iso-length-spectral runs
exactly as for the hyperbolic metrics:
$\Lambda_\euC(q_1)=\Lambda_\euC(q_2)$,
because an element of $\pi_1(S)$ conjugating $\gamma_1\in\Gamma_1$
to $\gamma_2\in\Gamma_2$ associates a geodesic
of $q_1$ for which the associated deck transformation is $\gamma_1$ 
to an equal-length $q_2$--geodesic by acting on the lift to $\tilde S$.
(See~\cite{buser} for a careful discussion.)

The key in using the Sunada construction is therefore to find examples for which 
the metrics on $S_1$ and $S_2$ are not isometric, but such choices of hyperbolic 
metrics on $S$ are in fact generic.
Now put a flat metric $q$ on $S$ in the conformal class of
such a hyperbolic metric, and lift it to flat metrics $q_i$ on $S_i$.
If $q_1$ is isometric to $q_2$, then they are conformally equivalent, so
the corresponding hyperbolic metrics are equal, a contradiction.  
Thus there is a ready supply of examples of distinct flat metrics for which 
$\Lambda_\euC(q_1)=\Lambda_\euC(q_2)$.

Note that this argument is for the unmarked length spectrum $\Lambda_\euC$ of all closed
curves; the counts of lifts in the Sunada construction are not sensitive to whether curves are 
simple.  The question of whether there are distinct flat surfaces with equal
unmarked length spectrum for the simple closed curves $\euS$ remains open.

\section{Appendix: More building blocks.}

Here we sketch the construction of the remaining basic building blocks needed to carry out the proof of Proposition \ref{P:magnetic family} for a general surface $S$ with $\xi(S) \geq 2$.  The building blocks are surfaces $\Sigma_{g,n,b}$ where $g$ is the genus, $n$, 
is the number of punctures/marked points, and $b$ is the number of boundary components.  If we let $d$ denote the dimension of the space of metrics we construct on $\Sigma_{g,n,b}$, then the resulting pairs $(\Sigma_{g,n,b},d)$ are:
\[ (\Sigma_{1,0,2},2) \, , \, (\Sigma_{1,0,1},2) \, , \, (\Sigma_{1,1,1},2) \, , \, (\Sigma_{0,2,1},0) \, , \, (\Sigma_{0,3,1},2) \, , \, (\Sigma_{0,4,1},2) \, , \, (\Sigma_{0,4,2},3). \]
The case of $\Sigma_{1,0,2}$ was discussed in Section \ref{S:families}.
Each building block will come equipped with a train track that carries the boundary, and when the building blocks are assembled to construct the surface $S$, the train tracks assemble to a complete recurrent train track.  The family of metrics for each will keep the boundary length fixed, so that the deformations can be carried out independently on each piece.  Gluing together the deformations is carried out in a fashion similar to that used for the closed case in Section \ref{S:families}.

By gluing the pieces above, one can construct flat structures and magnetic 
train-tracks on any surface with $\xi \geq 2$.  In sketch, one can attach along their boundaries copies of 
$\Sigma_{1,0,2}$ (to add one to the genus) and $\Sigma_{0,4,2}$  (to add four punctures) 
to obtain a surface 
with two boundary components which has almost all the required genus and 
number of punctures. One then caps off the two boundaries with the appropriate 
pieces to obtain the desired surface.  The resulting flat structure, $q$, and
magnetic train track, $\tau$, has a deformation for which the length of every curve 
in $\tau$ remains constant. The dimension of this deformation space is at least 
equal to the sum of the number of allowable deformations for the pieces 
involved.  As before, keeping total area one imposes a codimension-one condition at the end.

\subsection{$(\Sigma_{1,0,1},2)$}

The topological picture of $\Sigma_{1,0,1}$ together with its train track are 
shown on the left in Figure \ref{chunk101}.

\begin{figure}[ht]
\setlength{\unitlength}{0.01\linewidth}
\begin{picture}(80,30)
\put(0,0){\includegraphics[width=80\unitlength]{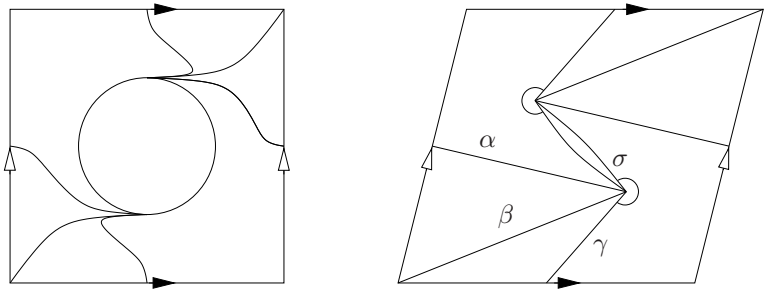}}
\put(50,15){$\alpha$}
\put(52,7){$\beta$}
\put(62,4){$\gamma$}
\put(64,13){$\sigma$}
\end{picture}
\caption{The topological picture of $\Sigma_{1,0,1}$ and its train track is on 
the left and the metric picture is on the right.  The angles of at least $\pi$ are indicated in the metric picture.}
\label{chunk101}
\end{figure}

The arcs in the boundary of the square are identified in pairs as indicated by the 
arrows. The generic metric in the deformation family is shown on the right in 
Figure \ref{chunk101} and is described as follows.  Starting with a parallelogram 
having one horizontal side and one skew side with positive slope, we identify the 
opposite sides by a translation as indicated by the arrows.  Next we cut a slit along a 
geodesic arc $\sigma$ in the parallelogram, and we assume that $\sigma$ has 
negative slope.  This produces a metric version of the topological surface, and the
geodesic version of the train track is obtained by adding the arcs $\alpha,\beta,\gamma$ 
as indicated.

If we require the boundary length to be fixed, so the length of $\sigma$ is fixed, then 
the dimension of the space of all such metrics is $4$: there are $3$ dimensions for 
the parallelogram and one for the angle $\sigma$ makes with the horizontal side.  
We now wish to impose constraints which guarantee that the change in lengths of 
the branches can be distributed to the switches in such a way that at each switch the 
increase in the lengths of the incoming branches is equal to the decrease in lengths of 
the outgoing branches.  In this case, one checks that this can only be accomplished if 
each of the lengths of $\alpha, \beta,\gamma$ change by the same amount.  This 
imposes two conditions: the difference in lengths of $\alpha$ and $\beta$ is constant, 
and the difference in lengths of $\beta$ and $\gamma$ is constant.  This cuts the 
dimension of the deformation space down by two, resulting in the 2--dimensional space 
of deformations that was claimed. It is interesting to note that in this case, there are 
nontrivial deformations for which the length vector on the train track itself remains 
constant.

\subsection{$(\Sigma_{1,1,1},2)$}

This building block is obtained by a minor modification of the previous one; see 
Figure \ref{chunk111}. We leave the details to the reader, but point out one new 
feature in this example not present in the previous two pieces.
Namely, the map $f:(\hat S,P) \to (\hat S,P)$ in the definition of a magnetic train 
track cannot be taken to be a homeomorphism. This is because the small branch 
that partially surrounds the puncture is collapsed to a point---the length vector assigns 
this branch zero length.

\begin{figure}[ht]
\setlength{\unitlength}{0.01\linewidth}
\begin{picture}(80,30)
\put(0,0){\includegraphics[width=80\unitlength]{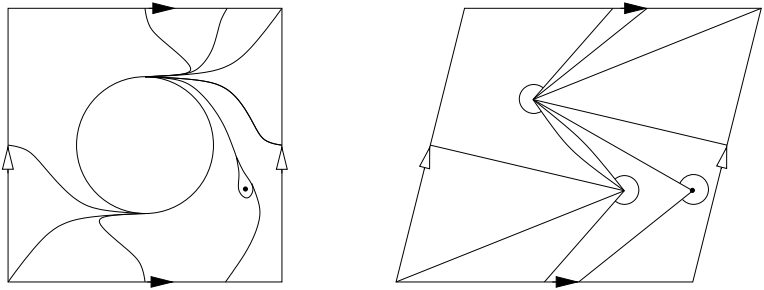}}
\end{picture}
\caption{The case $\Sigma_{1,1,1}$ is a minor variation of $\Sigma_{1,0,1}$ shown in Figure 
\ref{chunk101}.}
\label{chunk111}
\end{figure}

\subsection{$(\Sigma_{0,2,1},0)$}

For this building block, the metric picture degenerates completely to an arc and there is ``no room''  to construct any deformations; see Figure \ref{chunk021}.
This piece is used to cap off boundary components.  
The metric effect is simply to glue the boundary component to itself.

\begin{figure}[ht]
\setlength{\unitlength}{0.01\linewidth}
\begin{picture}(40,25)
\put(0,0){\includegraphics[width=40\unitlength]{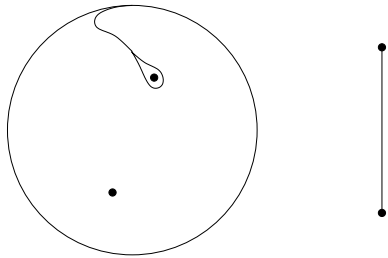}}
\end{picture}
\caption{The metric version for $\Sigma_{0,2,1}$ degenerates.}
\label{chunk021}
\end{figure}

\subsection{$(\Sigma_{0,3,1},1)$}

The generic metric is obtained from a parallelogram by identifying the arcs in the 
sides as indicated by the arrows in Figure~\ref{chunk031} via an appropriate 
semi-translation, then cutting open a slit in the interior emanating from one of the 
marked points. The small-loop branches of the train track are assigned zero length, 
and the three main branches (not in the boundary) are represented by the darkened 
arcs in the metric picture. A dimension count as above reveals that the deformation 
space has dimension $2$.

\begin{figure}[ht]
\setlength{\unitlength}{0.01\linewidth}
\begin{picture}(80,25)
\put(0,0){\includegraphics[width=80\unitlength]{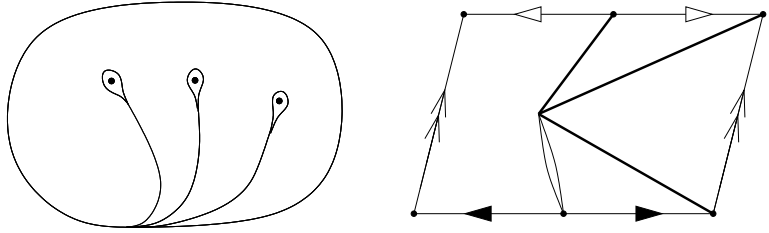}}
\end{picture}
\caption{The dark lines in the metric picture for $\Sigma_{0,3,1}$
represent the image of the three main branches of the train track.}
\label{chunk031}
\end{figure}

\subsection{$(\Sigma_{0,4,1},1)$}

This building block is obtained from the previous one in a similar fashion to the way $\Sigma_{1,1,1}$ is obtained from $\Sigma_{1,0,1}$; see Figure \ref{chunk041}.
We leave the details to the reader.

\begin{figure}[ht]
\setlength{\unitlength}{0.01\linewidth}
\begin{picture}(80,25)
\put(0,0){\includegraphics[width=80\unitlength]{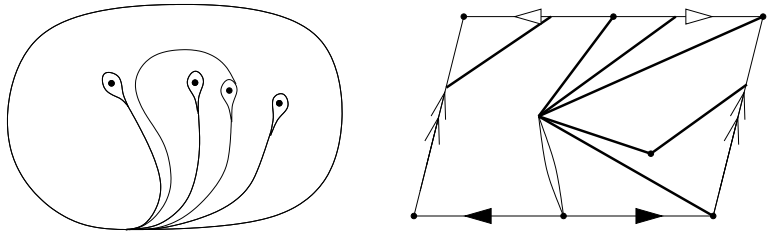}}
\end{picture}
\caption{Adding another puncture to $\Sigma_{0,3,1}$ to produce $\Sigma_{0,4,1}$.}
\label{chunk041}
\end{figure}

\subsection{$(\Sigma_{0,4,2},3)$}
The metric picture is formed from a parallelogram with sides identified as illustrated, then slit open along two equal-length arcs as shown.
We have labeled some of the branches of the train track in Figures \ref{chunk042a} and  \ref{chunk042c}.

\begin{figure}[ht]
\setlength{\unitlength}{0.01\linewidth}
\begin{picture}(60,38)
\put(0,0){\includegraphics[width=60\unitlength]{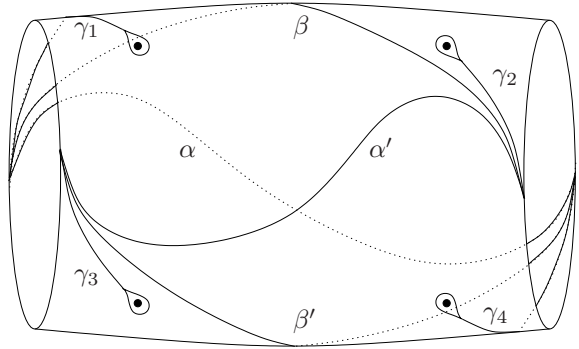}}
\put(18,20){$\alpha$}
\put(38,20){$\alpha'$}
\put(30,33){$\beta$}
\put(30,2){$\beta'$}
\put(7,33){$\gamma_1$}
\put(51,28){$\gamma_2$}
\put(7,7){$\gamma_3$}
\put(50,3){$\gamma_4$}
\end{picture}
\caption{The topological picture of $\Sigma_{0,4,2}$ together with its train track.}
\label{chunk042a}
\end{figure}

\begin{figure}[ht]
\setlength{\unitlength}{0.01\linewidth}
\begin{picture}(50,31)
\put(0,0){\includegraphics[width=50\unitlength]{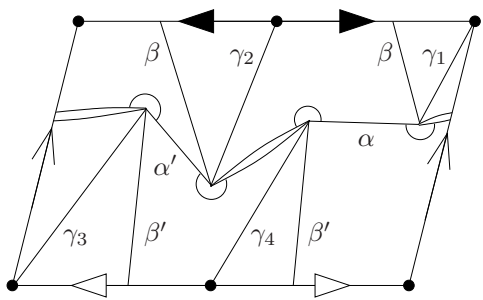}}
\put(15.5,13){$\alpha'$}
\put(37,16){$\alpha$}
\put(14.5,25){$\beta$}
\put(39,25){$\beta$}
\put(14.5,6){$\beta'$}
\put(31.7,6){$\beta'$}
\put(43.7,25){$\gamma_1$}
\put(23.5,25){$\gamma_2$}
\put(6,6){$\gamma_3$}
\put(25.7,6){$\gamma_4$}
\end{picture}
\caption{The metric picture of $\Sigma_{0,4,2}$.}
\label{chunk042c}
\end{figure}

The space of allowable deformations has dimension $3$.  To see this, first note 
that to properly distribute length changes at the switches, the lengths of the arcs 
are allowed to vary according to the following:
\[ \begin{array}{c|c|c|c|c|c|c|c}
\alpha & \alpha' & \beta & \beta' & \gamma_1 & \gamma_2 & \gamma_3 & \gamma_4\\
\hline
+ \epsilon + \delta & + \epsilon + \delta & + \epsilon + \delta & + \epsilon+\delta 
& + \epsilon & + \delta & + \epsilon & + \delta \\
\end{array} \]

To see that these variations are indeed possible (for small $\epsilon$ and $\delta$), 
we again appeal to a dimension count. The space of parallelograms with a pair of slits 
of fixed, equal length is $3+3+3 = 9$ dimensions. The 9-dimensional parameter space 
is subject to 6 equations derived from the geometry, 
leaving $3$ degrees of freedom.

\bibliography{newgen}{}
\bibliographystyle{plain}
\end{document}